\theoremstyle{plain}
\newtheorem{theorem}{Theorem}[section]
\newtheorem{lemma}[theorem]{Lemma}
\newtheorem{proposition}[theorem]{Proposition}
\newtheorem{corollary}[theorem]{Corollary}
\theoremstyle{definition}
\newtheorem{definition}[theorem]{Definition}
\newtheorem{example}[theorem]{Example}
\theoremstyle{remark}
\begin{document}

\afterpage{\rhead[]{\thepage} \chead[\small W. A. Dudek and R. A. R. Monzo]
{\small Translatable quadratical quasigroups} \lhead[\thepage]{} }                  

\begin{center}
\vspace*{2pt}
{\Large \textbf{Translatable quadratical quasigroups}}\\[30pt]
 {\large \textsf{\emph{Wieslaw A. Dudek$^*$ \ and \ Robert A. R. Monzo}}}\\[30pt]
\end{center}
 {\footnotesize\textbf{Abstract.} 
The concept of a $k$-translatable groupoid is introduced. Those $k$-translatable quadratical quasigroups induced by the additive group of integers modulo $m$, where $k<40$, are listed for $m\leqslant 1200$. The fine structure of quadratical quasigroups is explored in detail and the Cayley tables of quadratical quasigroups of orders $5$, $9$, $13$ and $17$ are produced. All but those of order $9$ are $k$-translatable, for some $k$. Quadratical quasigroups induced by the additive group of integers modulo $m$ are proved to be $k$-translatable, for some $k$. Open questions and thoughts about future research in this area are given.
 }
\footnote{\textsf{2010 Mathematics Subject Classification:} 20M15, 20N02}
\footnote{\textsf{Keywords:} quadratical quasigroup, $k$-translatable groupoid, cycle.}

\section*{\centerline{1. Introduction}}\setcounter{section}{1}

Geometrical motivations for the study of quadratical quasigroups have been given in \cite{Vol1, Vol2,Vol3, Vol4}. In particular Volenec \cite{Vol1,Vol2} defined a product $*$ on $\mathbb{C}$, the complex numbers, that defines a quadratical quasigroup. The product $x*y$ of two distinct elements is the third vertex of a positively oriented, isosceles right triangle in the complex plane, at which the right angle occurs.

The main aim of this paper is to give insight into the fine algebraic structure of quadratical quasigroups, in order to set the stage for, and to stimulate, further development of the general theory that is still in its relative infancy. This is the second of a series of four papers that advance this theory. We concern ourselves here mainly with the fine algebraic structure, rather than with the geometrical representations, of quadratical quasigroups. However, as noted by Volenec, each algebraic identity valid in the quadratical quasigroup $(\mathbb{C},*)$ can be interpreted as a geometrical theorem and the theory of quadratical quasigroups gives a better insight into the mutual relations of such theorems (\cite{Vol1}, page 108).

Volenec \cite{Vol1} proved that quadratical quasigroups have a number of properties, such as idempotency, mediality and cancellativity. These properties were applied by the authors in \cite{DM} to prove that quadratical quasigroups form a variety $\mathcal{Q}$. The spectrum of $\mathcal{Q}$ was proved to be contained in the set of all integers equal to $1$ plus a multiple of $4$.  Quadratical quasigroups are uniquely determined by certain abelian groups and their automorphisms \cite{D1}. Necessary and sufficient conditions under which $\mathbb{Z}_m$, the additive group of integers modulo $m$, induces quadratical quasigroups are given in \cite{DM}.
 
This paper builds on the authors' work in \cite{DM}, as well as the prior work of Polonijo \cite{Pol}, Volenec \cite{Vol1} and Dudek \cite{D1}. In Sections $3$, $4$, $5$, $6$ and $7$ the notion of a {\it four-cycle}, which was introduced in \cite{DM}, is used to explore in detail the fine structure of quadratical quasigroups. The concept of a four-cycle is applied in Sections $4$ and $6$ to produce Cayley tables for quadratical quasigroups of orders $5$, $9$, $13$ and $17$. These tables can be reproduced by model builders, but we would not achieve our aim of stimulating thought about the fine algebraic structure in that manner.

In Section $8$, all of these quadratical quasigroups except those of order $9$ are proved to be $k$-translatable, for some $k$. We prove that, up to isomorphism, there is only one quadratical quasigroup of order $9$ and that it is self-dual. Quadratical quasigroups of order $25$ and $29$ are found. The one of order $25$ is $18$-translatable, its dual is $7$-translatable, the quadratical quasigroup of order $29$ is $12$-translatable and its dual is $17$-translatable.

Sections $8$ and $9$ of this paper explore other ways of constructing $k$-translatable quasigroups. We introduce the central concept of a $k$-translatable groupoid in Section $8$ and use it to characterize quadratical quasigroups. In Section $9$ necessary and sufficient conditions are found for a quasigroup induced by $\mathbb{Z}_m$ to be $k$-translatable. We prove that a quadratical quasigroup induced by $\mathbb{Z}_m$ is always $k$-translatable, for some $k$. The existence of $k$-translatable quadratical quasigroups induced by some $\mathbb{Z}_m$ is established for each integer $k$, where $1<k<11$. Values of $m$ for which a quadratical quasigroup induced by $\mathbb{Z}_m$ is $(m-k)$-translatable are determined for each integer $k$, where $1<k<11$.

In Section $9$ lists are given for $k$-translatable $(k<40)$ quadratical quasigroups of orders $m <1200$, induced by $\mathbb{Z}_m$ and $k$-translatable quadratical quasigroups induced by $\mathbb{Z}_m$ for $m<500$. 

In a future publication, the two different approaches to the construction of quadratical quasigroups are united. It will be proved that a quadratical quasigroup is translatable if and only if it is induced by some $\mathbb{Z}_{4n+1}$. Finally, open questions and possible future directions for research are discussed in Section 9.

\section*{\centerline{2. Preliminaries}}\setcounter{section}{2}
Volenec \cite{Vol1} defined a {\it quadratical groupoid} as a right solvable groupoid satisfying the following condition:
$$
\rule{45mm}{0mm}xy \cdot x = zx \cdot yz.\rule{45mm}{0mm}(A)
$$ 

He proved that such groupoids are quasigroups and satisfy the identities listed below.

\begin{theorem}\label{T2.1} A quadratical groupoid satisfies the following identities:
\begin{eqnarray}
&&x = {x^2} \ \ \  (idempotency),\label{e1}\\
&&x \cdot yx = xy \cdot x \ \ \ (elasticity),\label{e2}\\
&&x \cdot yx = xy \cdot x=yx \cdot y \ \ \ (strong \ elasticity),\label{e3}\\
&&yx \cdot xy = x \ \ \ (bookend),\label{e4}\\
&&x \cdot yz = xy \cdot xz \ \ \ (left \ distributivity),\label{e5}\\
&&xy \cdot z = xz \cdot yz \ \ \ (right \ distributivity),\label{e6}\\
&&xy \cdot zw = xz \cdot yw \ \ \ (mediality),\label{e7}\\
&&x(y \cdot yx) = (xy \cdot x)y, \ \ \\\ \label{e8}
&&(xy \cdot y)x = y(x \cdot yx), \\ \label{e9}
&&xy = zw\,\longleftrightarrow\,yz = wx \ \ \ (alterability).\label{e10}
\end{eqnarray}
\end{theorem}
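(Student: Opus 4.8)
The plan is to treat the single axiom (A), $xy\cdot x = zx\cdot yz$, as a machine whose defining feature is that the left-hand side does not contain $z$, so the right-hand side $zx\cdot yz$ is \emph{constant} as $z$ ranges over the groupoid. Specializing $z$ to convenient values and cancelling with the solvability hypothesis should produce the whole list in a cascade, with mediality and the distributive laws as the technical core.

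First I would establish idempotency. Setting $y=z=x$ in (A) gives $x^2\cdot x = x^2\cdot x^2$; cancelling the common left factor $x^2$ (the cancellation afforded by solvability) yields $x=x^2$. With idempotency in hand the next three identities are immediate specializations of (A): putting $z=x$ gives $xy\cdot x = x^2\cdot yx = x\cdot yx$, which is elasticity; putting $z=y$ gives $xy\cdot x = yx\cdot y^2 = yx\cdot y$, which together with elasticity is strong elasticity; and putting $y=x$ gives $x = x^2\cdot x = zx\cdot xz$, i.e. the bookend identity after renaming $z$ to $y$. At this stage the groupoid is a cancellative idempotent quasigroup, so both translations are bijective and full cancellation is available.

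The heart of the argument is mediality together with the distributive pair. The cleanest route is to derive mediality first, directly from (A), and then invoke the classical fact that an idempotent medial quasigroup is automatically left and right distributive, which yields both distributive laws at once; note that the distributive laws alone would not suffice here, since distributive quasigroups need not be medial. To obtain mediality I would exploit the $z$-independence aggressively: relabelling (A) produces the constancy relation $ab\cdot ca = bc\cdot b$, whose right-hand side is free of $a$, and feeding carefully chosen auxiliary elements (supplied by surjectivity of the translations, e.g. an $X$ with $zX=xz$) into such relations and stripping outer factors by cancellation should collapse $xy\cdot zw$ and $xz\cdot yw$ to a common value. Once mediality and both distributive laws are secured, the identities $x(y\cdot yx)=(xy\cdot x)y$ and $(xy\cdot y)x=y(x\cdot yx)$ follow by short rewrites—distribute the outer factor, then collapse repeated factors using $x\cdot yx=xy\cdot x$ and idempotency—and alterability follows from cancellativity together with (A): assuming $xy=zw$, one substitutes into a suitable instance of (A) and cancels to obtain $yz=wx$, and symmetrically for the converse.

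I expect the main obstacle to be exactly the medial step. Bootstrapping from one identity to mediality requires finding the right chain of specializations of $z$ and the right auxiliary elements, and it is easy to go in circles, since many specializations of (A) merely reproduce elasticity. Securing idempotency at the very start is what breaks this symmetry, because it converts the quadratic-looking products such as $x^2\cdot x^2$ into the linear data needed to isolate mediality; after that point the distributive laws come for free from the Toyoda structure and the remaining identities are bookkeeping.
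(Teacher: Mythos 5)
The paper does not actually prove Theorem \ref{T2.1}; it quotes the identities from Volenec \cite{Vol1}, so your attempt has to stand on its own, and as written it has two genuine gaps. The first is at the very start: you derive idempotency from $x^2\cdot x = x^2\cdot x^2$ by ``cancelling the common left factor $x^2$ (the cancellation afforded by solvability)''. Right solvability only asserts the \emph{existence} of solutions to equations of the form $ux=v$, i.e.\ surjectivity of the relevant translations; it does not give injectivity, and even if one reads it as \emph{unique} right solvability it would only yield right cancellation, whereas stripping $x^2$ from the left of both sides requires injectivity of the left translation $L_{x^2}$. The fact that a quadratical groupoid is cancellative at all is precisely the content of Volenec's preliminary theorem that such groupoids are quasigroups (the sentence ``He proved that such groupoids are quasigroups and satisfy the identities listed below'' in Section 2), so invoking cancellation before idempotency is circular unless you first prove the quasigroup property from $(A)$ and right solvability alone --- and that is a nontrivial chain of substitutions, not a freebie. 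The same unproved cancellativity is then used again when you assert ``at this stage the groupoid is a cancellative idempotent quasigroup'' and later when you strip outer factors.

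The second gap is mediality. You correctly identify it as the heart of the theorem (and your observation that idempotency plus mediality yields both distributive laws, and hence identities \eqref{e8}, \eqref{e9} and alterability by routine rewriting, is sound). But for mediality itself you offer only a strategy --- ``feeding carefully chosen auxiliary elements \dots should collapse $xy\cdot zw$ and $xz\cdot yw$ to a common value'' --- with no actual chain of substitutions, and you concede in your final paragraph that this is exactly where you expect to get stuck. Since \eqref{e5}--\eqref{e10} all hang off this step, the proposal proves only the four easy specializations \eqref{e1}--\eqref{e4} (and those only modulo the unestablished cancellation). To repair it you would need to (i) prove from $(A)$ and right solvability that the groupoid is a quasigroup, and (ii) exhibit an explicit derivation of $xy\cdot zw = xz\cdot yw$; both are done in \cite{Vol1}, and neither is a one-line consequence of the ingredients you have on the table.
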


These identities can be used to characterize quadratical quasigroups. Namely, the following theorem is proved in \cite{DM}. 

\begin{theorem}\label{T2.2}
The class of all quadratical quasigroups form a variety uniquely defined by

$\bullet$ \ $(A)$, $\eqref{e3}$, $\eqref{e4}$, $\eqref{e7}$, or

$\bullet$ \ $\eqref{e1}$, $\eqref{e4}$, $\eqref{e7}$, or

$\bullet$ \ $\eqref{e2}$, $\eqref{e4}$, $\eqref{e7}$, or

$\bullet$ \ $\eqref{e4}$, $\eqref{e5}$, $\eqref{e10}$.
\end{theorem}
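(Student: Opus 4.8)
The plan is to treat all four axiom lists uniformly. Each bullet is a set of conditions in the language of one binary operation. In bullets one, two and three every condition --- the quadratical law $(A)$, strong elasticity $\eqref{e3}$, elasticity $\eqref{e2}$, idempotency $\eqref{e1}$, bookend $\eqref{e4}$ and mediality $\eqref{e7}$ --- is an identity, so each such bullet defines a variety by Birkhoff's theorem. Bullet four contains the alterability biconditional $\eqref{e10}$, which is a pair of implications rather than a single identity; here I would first show that, in the presence of $\eqref{e4}$ and $\eqref{e5}$, its two implications follow from genuine identities, so that bullet four also determines a variety. Writing $V_1,\dots,V_4$ for the four classes and $\mathcal{Q}$ for the class of quadratical quasigroups, the theorem amounts to $V_1=V_2=V_3=V_4=\mathcal{Q}$.

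One inclusion is free. By Theorem~\ref{T2.1} every quadratical quasigroup satisfies all of $\eqref{e1}$--$\eqref{e10}$ and, being quadratical, satisfies $(A)$; hence $\mathcal{Q}\subseteq V_j$ for each $j$. All the content lies in the reverse inclusions $V_j\subseteq\mathcal{Q}$, that is, in showing that any groupoid obeying one of the lists is a quadratical quasigroup.

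For the reverse inclusions I would take bullet two --- idempotency $\eqref{e1}$, bookend $\eqref{e4}$, mediality $\eqref{e7}$ --- as a common core and reduce the other three lists to it by equational manipulation alone. From $\eqref{e2},\eqref{e4},\eqref{e7}$ I would derive idempotency $\eqref{e1}$, so that bullet three yields the core; the same derivation covers bullet one, where $\eqref{e3}$ subsumes $\eqref{e2}$ and $(A)$ is already present; and from $\eqref{e4},\eqref{e5},\eqref{e10}$ I would derive both mediality $\eqref{e7}$ and idempotency $\eqref{e1}$, so that bullet four yields the core as well. These steps are substitution exercises and should be routine, though the order of substitutions will need care.

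It then remains to show that the core $\eqref{e1},\eqref{e4},\eqref{e7}$ forces a quadratical quasigroup, and this is where the real work sits. Cancellativity I would extract from the bookend law: $\eqref{e4}$ furnishes a canonical ``partial inverse'', and one verifies that left and right cancellation are interderivable through it, with mediality and idempotency breaking the apparent circularity. The genuine obstacle is solvability --- the surjectivity of the translations --- because equational hypotheses deliver injectivity far more readily than the existence of solutions; the natural route is to write down explicit division terms solving $a\cdot z=b$ and $z\cdot a=b$ as words in the single operation, and to verify via $\eqref{e4}$, $\eqref{e7}$ and $\eqref{e1}$ that they are solutions and are unique. Once the core is known to define quasigroups, deriving $(A)$ is a direct calculation using cancellation, so each core groupoid is quadratical; combined with $\mathcal{Q}\subseteq V_j$ this gives $V_j=\mathcal{Q}$ for all $j$ and, since each $V_j$ is a variety, establishes that $\mathcal{Q}$ is the variety defined by any one of the four lists.
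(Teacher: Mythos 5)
The paper does not actually prove Theorem \ref{T2.2}: it is imported verbatim from \cite{DM}, so there is no internal argument to measure your proposal against. Judged on its own terms, your architecture is the right one --- the inclusion $\mathcal{Q}\subseteq V_j$ is indeed immediate from Theorem \ref{T2.1}, reducing all four lists to the core $\eqref{e1},\eqref{e4},\eqref{e7}$ is the sensible decomposition, and you correctly locate the crux in solvability rather than cancellation. But as written this is a plan, not a proof: every load-bearing step is deferred with ``I would derive'' or ``should be routine,'' and the deferred steps \emph{are} the theorem.

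Two deferrals are genuinely substantive. First, the inter-bullet reductions are not mere substitution exercises: extracting idempotency from elasticity, bookend and mediality, or extracting mediality and idempotency from $\eqref{e4},\eqref{e5},\eqref{e10}$, each requires a nontrivial chain of equational manipulations (try it --- putting $y=x$ in $\eqref{e4}$ only yields $(x^2)^2=x$, and closing the gap to $x^2=x$ already needs the other axioms in an essential way). Second, and more seriously, the solvability step is circular as stated. You propose to ``write down explicit division terms'' solving $a\cdot z=b$ and $z\cdot a=b$; but the existence of such \emph{terms} (as opposed to solutions in each particular finite model) is essentially equivalent to the assertion that the identities define a variety of quasigroups --- i.e., to the theorem itself. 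A correct proof must actually exhibit the words (they can be found, e.g., by working in the linear representation of Theorem \ref{T-gr}, where $\psi^{-1}=2\varphi$ and $\varphi^2=\varphi-\varphi\psi$ let one hunt for a term with the right coefficients) and verify them from $\eqref{e1},\eqref{e4},\eqref{e7}$ alone; likewise the final derivation of $(A)$ must be carried out. Until those computations are supplied, the proposal records the shape of the argument in \cite{DM} without containing its content.
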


Quadratical quasigroups are uniquely characterized by commutative groups and their automorphisms. This characterization (proved in \cite{D1}) is presented below.

\begin{theorem}\label{T-gr}
A groupoid $(G,\cdot)$ is a quadratical quasigroup if and only if there exists a commutative group $(G,+)$ in which for every $a\in G$ the equation $z+z=a$ has a unique solution $z\in G$ and $\varphi,\psi$ are automorphisms of $(G,+)$ such that 
$$
xy=\varphi(x)+\psi(y),
$$
$$
\varphi(x)+\psi(x)=x,
$$
$$
2\varphi\psi(x)=x
$$
for all $x,y\in G$.
 \end{theorem}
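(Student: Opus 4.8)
The plan is to prove the two implications separately, handling the ``if'' direction by a direct verification in the endomorphism ring of $(G,+)$ and the harder ``only if'' direction by a Toyoda-type reconstruction of the abelian group from the quasigroup.

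\textbf{Sufficiency.} Suppose $(G,+)$, $\varphi$, $\psi$ are given as in the statement and put $xy=\varphi(x)+\psi(y)$. Since $\varphi$ and $\psi$ are automorphisms, the equations $a\cdot x=b$ and $y\cdot a=b$ have the unique solutions $x=\psi^{-1}(b-\varphi(a))$ and $y=\varphi^{-1}(b-\psi(a))$, so $(G,\cdot)$ is a quasigroup. Working in the endomorphism ring of $(G,+)$, the hypothesis $\varphi(x)+\psi(x)=x$ reads $\varphi+\psi=\mathrm{id}$, whence $\psi=\mathrm{id}-\varphi$ is a polynomial in $\varphi$ and therefore $\varphi\psi=\psi\varphi$. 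I would then check the three defining identities of the second bullet of Theorem~\ref{T2.2}: idempotency is immediate from $\varphi+\psi=\mathrm{id}$; mediality reduces exactly to $\varphi\psi=\psi\varphi$; and the bookend identity, after expanding $yx\cdot xy$, becomes $(\varphi^2+\psi^2)(y)+2\varphi\psi(x)=x$, which holds because $\varphi^2+\psi^2=(\varphi+\psi)^2-2\varphi\psi=\mathrm{id}-\mathrm{id}=0$ and $2\varphi\psi=\mathrm{id}$. By Theorem~\ref{T2.2} this makes $(G,\cdot)$ a quadratical quasigroup.

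\textbf{Necessity.} Conversely, let $(G,\cdot)$ be a quadratical quasigroup; by Theorem~\ref{T2.1} it is idempotent and medial. I would fix an arbitrary $0\in G$ and define $x+y=(x/0)\cdot(0\backslash y)$, where $/$ and $\backslash$ denote the right and left divisions. Idempotency ($0\cdot 0=0$) gives $0/0=0\backslash 0=0$, so $0$ is a two-sided identity for $+$. The substantive step is to show that $(G,+)$ is a commutative group and that $\varphi(x)=x\cdot 0$ and $\psi(y)=0\cdot y$ are automorphisms of it: this is precisely the classical Bruck--Toyoda representation of a medial quasigroup, and associativity, commutativity and the homomorphism property all follow from repeated application of mediality $\eqref{e7}$ together with the distributivity laws $\eqref{e5}$ and $\eqref{e6}$. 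A short cancellation then gives $x\cdot 0+0\cdot y=\bigl((x\cdot 0)/0\bigr)\cdot\bigl(0\backslash(0\cdot y)\bigr)=xy$, i.e. $xy=\varphi(x)+\psi(y)$.

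\textbf{The two conditions and $2$-divisibility.} With the representation in hand, the remaining conditions follow quickly. Since automorphisms fix $0$, idempotency gives $\varphi(x)+\psi(x)=x\cdot 0+0\cdot x=x\cdot x=x$, so again $\varphi+\psi=\mathrm{id}$ and $\varphi\psi=\psi\varphi$. Translating the bookend identity $yx\cdot xy=x$ through $xy=\varphi(x)+\psi(y)$ yields $(\varphi^2+\psi^2)(y)+2\varphi\psi(x)=x$ for all $x,y$; setting $y=0$ gives $2\varphi\psi(x)=x$. Finally, $2\varphi\psi=\mathrm{id}$ exhibits the doubling map $z\mapsto z+z$ as the inverse of the automorphism $\varphi\psi$, hence a bijection, so $z+z=a$ is uniquely solvable. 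I expect the necessity direction to be the main obstacle: verifying the group axioms and the automorphism property of $\varphi$ and $\psi$ purely from mediality and distributivity is the genuinely laborious part, while everything else is routine bookkeeping in the endomorphism ring.
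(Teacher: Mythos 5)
The paper does not prove this theorem; it is quoted verbatim from \cite{D1} and used as a black box, so there is no in-paper argument to compare against. Your proposal is correct and follows the route one would expect (and that \cite{D1} takes): the Bruck--Toyoda representation of an idempotent medial quasigroup for necessity, and a direct verification of idempotency, mediality and the bookend identity in the endomorphism ring (via the second bullet of Theorem~\ref{T2.2}) for sufficiency; your algebra $\varphi+\psi=\mathrm{id}\Rightarrow\varphi\psi=\psi\varphi$ and $\varphi^2+\psi^2=\mathrm{id}-2\varphi\psi=0$ checks out, as does extracting $2\varphi\psi=\mathrm{id}$ by setting $y=0$ in the expanded bookend identity and deducing unique $2$-divisibility from bijectivity of $\varphi\psi$. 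The only step not carried out in full is the verification that $x+y=(x/0)\cdot(0\backslash y)$ yields an abelian group with $\varphi,\psi$ automorphisms, which you correctly attribute to the classical Toyoda theorem; a self-contained write-up would need to include that (routine but nontrivial) mediality computation.
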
  

In this case we say that the quadratical quasigroup is {\em induced by $(G,+)$}.

We also will need the following two results proved in \cite{DM}.

\begin{theorem}\label{T2.3}
A finite quadratical groupoid has order $m=4t+1$.
\end{theorem}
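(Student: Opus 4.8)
The plan is to reduce the whole statement to the group-theoretic description furnished by Theorem~\ref{T-gr} and then to exhibit a ``complex structure'' on the underlying abelian group. Since a quadratical groupoid is a quasigroup, a finite one is induced by a finite commutative group $(G,+)$ in which every element is uniquely halvable, together with automorphisms $\varphi,\psi$ satisfying $\varphi(x)+\psi(x)=x$ and $2\varphi\psi(x)=x$. Unique solvability of $z+z=a$ says exactly that doubling $x\mapsto x+x$ is a bijection of $G$, so $G$ has no element of order $2$ and $m=|G|$ is odd. It then remains to promote ``odd'' to ``$\equiv 1\pmod 4$''.

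Working in the endomorphism ring $\mathrm{End}(G,+)$, writing $1$ for the identity map and $-1$ for $x\mapsto -x$, I would first record that $\psi=1-\varphi$ (from $\varphi+\psi=1$), so that $\varphi$ and $\psi$ commute, and that $2\varphi\psi=1$ rewrites as $2\varphi(1-\varphi)=1$, i.e. $2\varphi^{2}-2\varphi+1=0$. Setting $\theta:=2\varphi-1$, a direct expansion gives
$$
\theta^{2}=(2\varphi-1)^{2}=4\varphi^{2}-4\varphi+1=2\bigl(2\varphi^{2}-2\varphi\bigr)+1=2(-1)+1=-1,
$$
so $\theta$ is an endomorphism of $G$ with $\theta^{2}=-1$; being a square root of the bijection $-1$, it is itself an automorphism. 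The main content of the argument is precisely this step: extracting from the two defining relations an automorphism $\theta$ behaving like the imaginary unit. Geometrically this is the expected algebraic shadow of Volenec's construction on $\mathbb{C}$.

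Finally I would count the orbits of the cyclic group $\langle\theta\rangle$ acting on $G$. The element $0$ is fixed, and for any $x\neq 0$ the orbit is $\{x,\theta x,-x,-\theta x\}$. I would check these four elements are pairwise distinct: every coincidence among them forces either $2x=0$ or $\theta x=\pm x$; the former is impossible since $m$ is odd, and the latter yields $-x=\theta^{2}x=x$, hence again $2x=0$. Thus every nonzero element lies in an orbit of size exactly $4$, these orbits partition $G\setminus\{0\}$ into blocks of four, and $m-1\equiv 0\pmod 4$, i.e. $m=4t+1$. No serious obstacle remains once $\theta$ is in hand; the orbit computation is then routine.
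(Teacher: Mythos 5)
Your argument is correct, and it is a genuinely different route from the one the paper relies on. The paper itself gives no proof of Theorem~\ref{T2.3}; it imports the result from \cite{DM}, where (as recapped in Section~3 here) the argument is combinatorial and intrinsic to the groupoid identities: one fixes $a\neq b$, shows every element other than $aba$ lies in a $4$-cycle based on $aba$, and that two such $4$-cycles are equal or disjoint, so $Q\setminus\{aba\}$ is partitioned into blocks of four. You instead pass through the group-theoretic characterization of Theorem~\ref{T-gr}, extract the endomorphism $\theta=2\varphi-1$ with $\theta^{2}=-1$ (a computation consistent with the relation $2a^{2}-2a+1=0$ of Theorem~\ref{T2.4}), and count orbits of $\langle\theta\rangle$ on $G\setminus\{0\}$; your verification that these orbits all have size $4$ (every degeneration forces $2x=0$, excluded since $|G|$ is odd) is complete. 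The two proofs are philosophically parallel --- each partitions the complement of a distinguished point into $4$-element blocks --- but yours buys a cleaner and more conceptual argument (the ``imaginary unit'' automorphism, matching Volenec's model on $\mathbb{C}$) at the cost of invoking the nontrivial structure theorem from \cite{D1}, whereas the $4$-cycle proof works directly from the defining identities and needs no representation of the quasigroup by a group. One cosmetic remark: the case $m=1$ is trivially $4\cdot 0+1$ and is covered by your argument (the orbit decomposition of the empty set $G\setminus\{0\}$), whereas the $4$-cycle argument tacitly assumes two distinct elements exist; neither issue is substantive.
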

So, later it will be assumed that $m=4t+1$ for some natural $t$.

\begin{theorem}\label{T2.4} A quadratical groupoid induced by the additive group $\mathbb{Z}_m$ has the form
$$x\cdot y=ax+(1-a)y\, ,
$$
where $\;a\in\mathbb{Z}_m$ and
\begin{equation}\label{ea}
\rule{35mm}{0mm}2a^2-2a+1=0 .
\end{equation}
\end{theorem}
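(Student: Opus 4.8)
The plan is to specialize the group-theoretic characterization of Theorem~\ref{T-gr} to the cyclic case $G=\mathbb{Z}_m$. By that theorem, a quadratical quasigroup induced by $\mathbb{Z}_m$ is of the form $xy=\varphi(x)+\psi(y)$, where $\varphi,\psi$ are automorphisms of $(\mathbb{Z}_m,+)$ subject to $\varphi(x)+\psi(x)=x$ and $2\varphi\psi(x)=x$. The first thing I would record is the standard fact that every automorphism of the additive cyclic group $\mathbb{Z}_m$ is a multiplication map: if $\theta\in\mathrm{Aut}(\mathbb{Z}_m)$ then $\theta(x)=\theta(1)\,x$ for all $x$, and $\theta(1)$ is necessarily a unit of $\mathbb{Z}_m$. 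Hence I may write $\varphi(x)=ax$ and $\psi(y)=by$ for suitable units $a,b\in\mathbb{Z}_m$, so that $xy=ax+by$.

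Next I would feed these expressions into the two defining relations. The condition $\varphi(x)+\psi(x)=x$ reads $(a+b)x=x$ for every $x\in\mathbb{Z}_m$; evaluating at $x=1$ (which generates the group) gives $a+b=1$, i.e.\ $b=1-a$. This already produces the asserted form $x\cdot y=ax+(1-a)y$. The condition $2\varphi\psi(x)=x$ becomes $2ab\,x=x$ for all $x$, hence $2ab=1$; substituting $b=1-a$ turns this into $2a(1-a)=1$, which rearranges exactly to $2a^2-2a+1=0$, that is, equation~\eqref{ea}.

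Finally I would check consistency with the running hypotheses. Since $m=4t+1$ by Theorem~\ref{T2.3}, the element $2$ is invertible modulo $m$, which is precisely what makes the halving equation $z+z=a$ of Theorem~\ref{T-gr} uniquely solvable. Moreover, the relation $2a(1-a)=1$ shows that $a$ has inverse $2(1-a)$ and is therefore automatically a unit, so $\varphi$ and $\psi$ are genuine automorphisms and no separate invertibility hypothesis on $a$ need be imposed; the same computation run backwards confirms that every $a$ satisfying \eqref{ea} does return automorphisms meeting the conditions of Theorem~\ref{T-gr}.

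There is no real obstacle here: the argument is a direct translation of Theorem~\ref{T-gr}. The only points requiring (routine) care are the identification of $\mathrm{Aut}(\mathbb{Z}_m)$ with multiplication by units and the reduction of the universally quantified relations $\varphi(x)+\psi(x)=x$ and $2\varphi\psi(x)=x$ to the scalar equations $a+b=1$ and $2ab=1$ by evaluation at the generator $1$.
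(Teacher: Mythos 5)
Your derivation is correct: the paper itself states Theorem~\ref{T2.4} without proof (citing \cite{DM}), and specializing Theorem~\ref{T-gr} to $\mathbb{Z}_m$ by writing $\varphi(x)=ax$, $\psi(y)=by$ with $a,b$ units, then extracting $a+b=1$ and $2ab=1$ from the two universally quantified conditions, is exactly the intended route; the algebra $2a(1-a)=1\Leftrightarrow 2a^2-2a+1=0$ checks out. Your closing observations — that $m=4t+1$ makes $2$ invertible so $z+z=a$ is uniquely solvable, and that $2a(1-a)=1$ forces $a$ and $1-a$ to be units so no separate hypothesis is needed — correctly tie up the converse direction as well.
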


\section*{\centerline{3. Products in quadratical quasigroups}}\setcounter{section}{3}\setcounter{theorem}{0}

Let $Q$ be a quadratical quasigroup and $a,b\in Q$ be two different elements. Suppose that $C=\{x_1,x_2,\ldots,x_n\}\subseteq Q$ consists of $n$ distinct elements, such that $aba=x_1x_2=x_2x_3=x_3x_4=\ldots=x_{n-1}x_n=x_nx_1$. Then $C$ will be called an {\it $($ordered$)$ $n$-cycle based on} $aba$. Note that $x_1\ne aba$, or else $x_1=x_2=\ldots=x_n=aba$. Note also that if $C=\{x_1,x_2,x_3,\ldots,x_n\}\subseteq Q$ is an $n$-cycle based on $aba$, then so is  $C_i=\{x_i,x_{(i+1)mod\,n},x_{(i+2)mod\,n},\ldots,x_{(i+n-1)mod\,n}\}$. 

In \cite{DM} is proved that in a quadratical quasigroup all $n$-cycles have the length $n=4$. Moreover, if $a,b\in Q$ and $a\ne b$, then each element $x_1\ne aba$ of $Q$ is a member of a $4$-cycle based on $aba$. Two $4$-cycles based on $aba$, where $a\ne b$, are equal or disjoint. Note that in any $4$-cycle $C=\{x_1,x_2,x_3,x_4\}$, $x_4=x_1x_3$. Hence, $C=\{x,yx,y,xy\}$, where $x=x_1$ and $y=x_3$.
 
\begin{definition}\label{D3.1} Let $Q$ be a quadratical quasigroup with $\{a,b\}\subseteq Q$ and $a\ne b$. Then $\{a,b,ab,ba,aba\}$ contains five distinct elements. We will use the notation $[1,1]=a$, $[1,2]=ab$, $[1,3]=ba$ and $[1,4]=b$. We omit the commas and square brackets in the notation, when this causes no confusion, and write $11=a$, $12=ab$, $13=ba$ and $14=b$. For $n\geqslant 2$, by induction we define $n1=(n-1)1\cdot (n-1)2$, $n2 = (n-1)2\cdot (n-1)4$, $n3 = (n-1)3\cdot (n-1)1$,
$n4 = (n-1)4\cdot (n-1)3$ and $Hn = \{n1, n2, n3, n4\}$. On the occasions when we need to highlight that the element $fk$, $f\in\{1,2,\ldots,n\}$ and $k\in\{1,2,3,4\}$, is in the dual quadratical quasigroup $Q^*$ we will denote it by $fk^*$. Similarly, $Hn^* = \{n1^*, n2^*, n3^*, n4^*\}$. Note that the values of both $fk$ and $fk^*$ depend on the choice of the elements $a$ and $b$. 
\end{definition}
\begin{example}\label{Ex3.2}  $H2=\{a\cdot ab,ab\cdot b,ba\cdot a,b\cdot ba\}$,\\
$H3=\{(a\cdot ab)(ab\cdot b),(ab\cdot b)(b\cdot ba),(ba\cdot a)(a\cdot ab),(b\cdot ba)(ba\cdot a)\}$,\\
$H4=\{(31\cdot 32)(32\cdot 34), (32\cdot 34)(34\cdot 33),(33\cdot 31)(31\cdot 32), (34\cdot 33)(33\cdot 31)\}$, where\\
 $31=(a\cdot ab)(ab\cdot b)$, $32=(ab\cdot b)(b\cdot ba)$, $33= (ba\cdot a)(a\cdot ab)$ and $34=(b\cdot ba)(ba\cdot a)$. 
\end{example}
\begin{example}\label{Ex3.3} $11^* = a$, $12^* =a*b$, $13^* =b*a$, $14^* =b$ and, for $n\geqslant 2$, by induction we define $n1^* = (n-1)1^**(n-1)2^*$, $n2^* = (n-1)2^**(n-1)4^*$, $n3^* = (n-1)3^**(n-1)1^*$ and $n4^* = (n-1)4^* *(n-1)3^*$.
\end{example}
\begin{example}\label{Ex3.4}  $H2^*\! =\!\{a*(a*b), (a*b)*b,(b*a)*a,b*(b*a)\} = \{ba\cdot a, b\cdot ba, a\cdot ab, ab\cdot b\}$ and 
$52^*=42^*\cdot 44^*=(32^*\cdot 34^*)(34^*\cdot 33^*)=  
( ((ab*b)* (b*ba))*((b*ba)*(ba*a)))*(((b* ba)*  (ba* a)) *(ba*a) * (a* ab))$, where $
a* ab =a*(a*b)$, $ab*b =(a*b)*b$, $ba*a =(b*a)*a$  and $b*ba = b*(b*a)$.
\end{example}
Note that the expression $ab$, when working in the dual groupoid $Q^* = (Q,*)$, equals $a*b$, which equals $b\cdot a$ in the original groupoid itself. This notation will cause no problems, as we will either calculate values only using the dot product or the star product, or when we are calculating using both products, as in Theorem \ref{T5.1}, the distinction will be obvious.

The proofs of the following propositions are straightforward, using induction on $n$ and the properties of quadratical quasigroups, and are omitted.

\begin{proposition}\label{P3.5} For any positive integer $t$, \ $t1\cdot t4=t2$, $t2\cdot t3=t4$, $t3\cdot t2=t1$ and $t4\cdot t1=t3$.
\end{proposition}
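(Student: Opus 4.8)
The plan is to prove all four identities simultaneously by induction on $t$, since the recursive definition of the symbols $t1,t2,t3,t4$ intertwines the four in a cyclic fashion and no one of them can be handled in isolation.

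For the base case $t=1$, two of the identities are immediate from the definitions: $11\cdot 14 = a\cdot b = ab = 12$ and $14\cdot 11 = b\cdot a = ba = 13$. The remaining two, $12\cdot 13 = ab\cdot ba = b = 14$ and $13\cdot 12 = ba\cdot ab = a = 11$, are precisely instances of the bookend identity \eqref{e4}.

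For the inductive step I would assume the four identities hold at level $t-1$ and abbreviate $p=(t-1)1$, $q=(t-1)2$, $r=(t-1)3$, $s=(t-1)4$. The inductive hypothesis then reads $ps=q$, $qr=s$, $rq=p$, $sp=r$, while the defining recursion gives $t1=pq$, $t2=qs$, $t3=rp$, $t4=sr$. The key observation is that each product to be verified is a medial rearrangement of two hypothesis expressions. For instance,
$$
t1\cdot t4 = (pq)(sr) = (ps)(qr) = q\cdot s = t2,
$$
where the second equality is mediality \eqref{e7} and the third applies the inductive hypothesis twice. The other three identities follow identically: in each case mediality \eqref{e7} splits the product into a pair of factors, both of which collapse under the inductive hypothesis to yield the claimed value.

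Since the argument is driven entirely by mediality together with the hypothesis, I do not expect a serious obstacle; the only genuine content is choosing, in each of the four cases, the pairing under which \eqref{e7} separates the product into two subwords that the inductive hypothesis can evaluate. The one place demanding care is keeping the cyclic bookkeeping of the four interlocked identities consistent, so that each hypothesis equation is quoted in its correct orientation.
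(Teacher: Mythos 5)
Your proof is correct and follows exactly the route the paper intends: the authors omit the argument, stating only that these propositions follow "using induction on $n$ and the properties of quadratical quasigroups," and your induction — bookend \eqref{e4} for the base case, mediality \eqref{e7} plus the four interlocked hypotheses for the step — is precisely that. All four medial rearrangements check out against the recursion $t1=pq$, $t2=qs$, $t3=rp$, $t4=sr$.
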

\begin{proposition}\label{P3.6} For $t>1$, \ $aba\cdot tk=(t-1)k$ \ for any $k\in\{1,2,3,4\}$.
\end{proposition}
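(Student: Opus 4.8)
The plan is to induct on $t$, viewing left multiplication by $aba$ as an operator that lowers the index $t$ by one on every element of $Hn$. For the base case $t=2$, recall from Definition \ref{D3.1} that the members of $H2$ are $21=a\cdot ab$, $22=ab\cdot b$, $23=ba\cdot a$ and $24=b\cdot ba$, while strong elasticity \eqref{e3} supplies the three presentations $aba=ab\cdot a=ba\cdot b=a\cdot ba$. The cases $k=1$ and $k=4$ then fall out of a single application of the bookend \eqref{e4}, $yx\cdot xy=x$: taking $(y,x)=(ab,a)$ gives $aba\cdot 21=(ab\cdot a)(a\cdot ab)=a=11$, and taking $(y,x)=(ba,b)$ gives $aba\cdot 24=(ba\cdot b)(b\cdot ba)=b=14$.

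The cases $k=2,3$ require slightly more, since $12=ab$ and $13=ba$ are genuine products rather than generators and so no single bookend produces them in one stroke. Here I would first rewrite $aba$ through \eqref{e3}, then regroup by mediality \eqref{e7}, collapse the repeated factor by idempotency \eqref{e1}, and close with one bookend \eqref{e4}. Concretely, $aba\cdot 22=(ba\cdot b)(ab\cdot b)=(ba\cdot ab)(b\cdot b)=(ba\cdot ab)\cdot b=a\cdot b=ab$, and symmetrically $aba\cdot 23=(ab\cdot a)(ba\cdot a)=(ab\cdot ba)(a\cdot a)=b\cdot a=ba$.

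For the inductive step, suppose the claim holds for $t-1$, with $t\geqslant 3$. Each element $tk$ is by definition a product of two members of $H(t-1)$; for instance $t1=(t-1)1\cdot (t-1)2$. Pulling $aba$ across this product by left distributivity \eqref{e5} and then invoking the induction hypothesis gives $aba\cdot t1=(aba\cdot (t-1)1)(aba\cdot (t-1)2)=(t-2)1\cdot (t-2)2=(t-1)1$, the final equality being exactly the defining recursion for $(t-1)1$. The identical three-step argument disposes of $k=2,3,4$, since $t2,t3,t4$ are each products of two elements of $H(t-1)$ as well.

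The inductive step is purely mechanical once \eqref{e5} is available; the only genuine content is the base case, and inside it the two cases $k=2,3$, where one must hit on the right sequence of strong elasticity, mediality and idempotency to uncover a bookend. I expect that to be the sole place demanding any ingenuity — everything else is forced by the recursive definitions of $Hn$.
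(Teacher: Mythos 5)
Your proof is correct and follows exactly the route the paper indicates: the authors omit the argument, remarking only that it is "straightforward, using induction on $n$ and the properties of quadratical quasigroups," and your induction on $t$ via left distributivity \eqref{e5}, with the base case settled by strong elasticity, mediality, idempotency and the bookend identity, is precisely that intended argument. All individual steps check out against the identities of Theorem \ref{T2.1} and the recursion in Definition \ref{D3.1}.
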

\begin{proposition}\label{P3.7} For $t>1$, \ $t1\cdot aba=(t-1)2$, $t2\cdot aba=(t-1)4$, $t3\cdot aba=(t-1)1$ and $t4\cdot aba=(t-1)3$.
\end{proposition}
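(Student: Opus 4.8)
The plan is to prove all four identities simultaneously by induction on $t$, exactly as the text indicates for Propositions \ref{P3.5} and \ref{P3.6}. The engine of the inductive step will be right distributivity \eqref{e6}, which lets $aba$ be pushed across the recursive definition of each $tk$ in Definition \ref{D3.1}.

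For the inductive step (with $t\geqslant 3$) I would expand $tk$ via Definition \ref{D3.1} as a product of two elements of $Ht-1$, multiply on the right by $aba$, and apply \eqref{e6}. For instance, since $t1=(t-1)1\cdot(t-1)2$,
$$
t1\cdot aba=\big((t-1)1\cdot aba\big)\big((t-1)2\cdot aba\big)=(t-2)2\cdot(t-2)4=(t-1)2,
$$
where the middle equality is the induction hypothesis at level $t-1$ and the last equality is the defining relation $n2=(n-1)2\cdot(n-1)4$. The remaining three cases are identical in spirit: in each, the permutation $1\mapsto2,\ 2\mapsto4,\ 3\mapsto1,\ 4\mapsto3$ that right multiplication by $aba$ induces on the column index is precisely the permutation already built into the recursion of Definition \ref{D3.1}, so after \eqref{e6} and the induction hypothesis the two factors reassemble into the required element of $Ht-1$.

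The genuine content therefore lies in the base case $t=2$, which must be checked by hand since the inductive step would refer to the undefined level $t-2=0$. Here I would first record, via strong elasticity \eqref{e3}, that $aba=a\cdot ba=ab\cdot a=ba\cdot b$, and then recognise three of the four products as instances of the bookend identity \eqref{e4}, $yx\cdot xy=x$: namely $21\cdot aba=(a\cdot ab)(ab\cdot a)=ab=12$ (take $y=a,\ x=ab$), $23\cdot aba=(ba\cdot a)(a\cdot ba)=a=11$ (take $y=ba,\ x=a$), and $24\cdot aba=(b\cdot ba)(ba\cdot b)=ba=13$ (take $y=b,\ x=ba$). The remaining case $22\cdot aba=(ab\cdot b)(a\cdot ba)$ does not fit the bookend pattern directly; I would instead apply mediality \eqref{e7} to rewrite it as $(ab\cdot a)(b\cdot ba)=aba\cdot 24$, and then invoke Proposition \ref{P3.6} to conclude $aba\cdot 24=14$.

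I expect the main obstacle to be purely bookkeeping: keeping the four interlocking induction hypotheses and the index permutation aligned so that each use of \eqref{e6} lands on the correct defining relation, together with the slightly non-uniform base case in which $22$ must be handled separately from the other three. No deeper idea is needed beyond the observation that right multiplication by $aba$ inverts the recursion of Definition \ref{D3.1}.
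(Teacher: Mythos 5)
Your proof is correct, and it follows exactly the route the paper indicates (and omits): induction on $t$ using the quadratical identities, with right distributivity \eqref{e6} driving the inductive step and the bookend/mediality identities handling the base case $t=2$. All four base computations and the alignment of the permutation $1\mapsto2,\ 2\mapsto4,\ 3\mapsto1,\ 4\mapsto3$ with the recursion of Definition \ref{D3.1} check out.
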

\begin{proposition}\label{P3.8} For any positive integer $t$,\ $Ht$ contains $4$ distinct elements.
\end{proposition}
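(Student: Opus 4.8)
The plan is to prove the statement by induction on $t$, exploiting the fact that left multiplication by the fixed element $aba$ carries $Ht$ onto $H(t-1)$ coordinatewise. For the base case $t=1$ there is nothing to do beyond invoking Definition~\ref{D3.1}: since $a\neq b$, the set $\{a,b,ab,ba,aba\}$ has five distinct elements, so in particular $11=a$, $12=ab$, $13=ba$ and $14=b$ are four distinct elements and $H1$ has the required size.

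For the inductive step, suppose $t>1$ and that $H(t-1)=\{(t-1)1,(t-1)2,(t-1)3,(t-1)4\}$ already consists of four distinct elements. The key tool is Proposition~\ref{P3.6}, which gives $aba\cdot tk=(t-1)k$ for every $k\in\{1,2,3,4\}$. In other words, the left translation $L_{aba}\colon x\mapsto aba\cdot x$ sends each $tk$ to $(t-1)k$. Since $Q$ is a quasigroup, every left translation is a bijection, hence injective. Therefore, if $ti=tj$ for some $i,j\in\{1,2,3,4\}$, applying $L_{aba}$ yields $(t-1)i=(t-1)j$, and the inductive hypothesis forces $i=j$. Thus the four elements $t1,t2,t3,t4$ are pairwise distinct, completing the induction.

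I expect the only real content to lie in recognizing that Proposition~\ref{P3.6} already furnishes exactly the coordinatewise correspondence between $Ht$ and $H(t-1)$ that is needed; once that is noticed, distinctness transfers immediately through the injectivity of $L_{aba}$, and the one point requiring care is simply that $t>1$ so that Proposition~\ref{P3.6} applies. An alternative route would be to show directly, using the internal products of Proposition~\ref{P3.5} together with the bookend and idempotency laws, that $Ht$ is the underlying set of a $4$-cycle $\{x,yx,y,xy\}$ based on $aba$ with $x=t1$ and $y=t4$, and then to quote the result of \cite{DM} that every such cycle has four distinct members. However, since the notion of cycle already has distinctness built into its definition, that approach still requires a separate argument that the four symbols denote distinct elements, so the translation argument via $L_{aba}$ is the more economical proof.
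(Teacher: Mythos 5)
Your proof is correct, and it follows exactly the route the paper indicates (the paper omits the proof, saying only that these propositions follow ``by induction on $n$ and the properties of quadratical quasigroups''): the base case is the five-element claim in Definition~\ref{D3.1}, and the inductive step transfers distinctness from $H(t-1)$ to $Ht$ via Proposition~\ref{P3.6} and the injectivity of left translation by $aba$.
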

\begin{proposition}\label{P3.9} For any positive integer $t$,\ $Ht\cap \{aba\}=\emptyset$.
\end{proposition}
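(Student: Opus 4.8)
The plan is to argue by induction on $t$ (equivalently, by descent), relying on Proposition~\ref{P3.6} together with idempotency. I would first dispose of the base case $t=1$: by Definition~\ref{D3.1} the five elements $a,b,ab,ba,aba$ are pairwise distinct, so $aba$ equals none of $11=a$, $12=ab$, $13=ba$, $14=b$, which gives $H1\cap\{aba\}=\emptyset$.

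For the inductive step I would assume $aba\notin Ht$ for some $t\geqslant 1$ and suppose, toward a contradiction, that $aba=(t+1)k$ for some $k\in\{1,2,3,4\}$. Because $t+1>1$, Proposition~\ref{P3.6} applies at index $t+1$ and yields $aba\cdot(t+1)k=tk$. Substituting the assumed equality $(t+1)k=aba$ into the left-hand side and invoking idempotency $\eqref{e1}$ collapses this to $aba=aba\cdot aba=aba\cdot(t+1)k=tk$, so that $aba\in Ht$, contradicting the induction hypothesis. Hence $aba\notin H(t+1)$, and the induction closes.

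I expect no real obstacle here: the whole argument turns on the single observation that left-multiplying the putative identity $aba=(t+1)k$ by $aba$ lowers the level index by one (via Proposition~\ref{P3.6}) while fixing $aba$ itself (via idempotency), so membership of $aba$ in some $Ht$ would propagate all the way down to $H1$. The only genuinely non-mechanical ingredient is the base case, which is not a computation but the previously established distinctness of $\{a,b,ab,ba,aba\}$; this is exactly what halts the descent and forces the contradiction. A direct induction that instead tracks the four defining recursions for $tk$ and checks each against $aba$ is possible, but it would merely re-prove content already packaged in Proposition~\ref{P3.6}, so I would avoid it as less economical.
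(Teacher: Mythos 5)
Your proof is correct and takes essentially the approach the paper intends: the paper omits the proof of this proposition, saying only that it is straightforward by induction using the properties of quadratical quasigroups, and your descent --- left-multiplying the putative identity $aba=(t+1)k$ by $aba$ to lower the index via Proposition~\ref{P3.6} while idempotency fixes $aba$ --- is exactly such an induction. The base case correctly rests on the distinctness of the five elements $a,b,ab,ba,aba$ recorded in Definition~\ref{D3.1}, and there is no circularity since Proposition~\ref{P3.6} is established independently of this one.
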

\begin{proposition}\label{P3.10} For any positive integer $t$, \ $t1\cdot t3=t2\cdot t1=t3\cdot t4=t4\cdot t2=aba$.
\end{proposition}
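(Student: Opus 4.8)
The plan is to prove the four equalities simultaneously by induction on $t$, exploiting the fact that the recursive definition of the $(t+1)k$ builds each new product out of two products from level $t$. Conceptually the assertion is that the four elements of $Ht$, listed in the cyclic order $t2,\,t1,\,t3,\,t4$, form a $4$-cycle based on $aba$; so it suffices to show that the four \emph{consecutive} products all equal $aba$.

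For the base case $t=1$ I would unwind the definitions $11=a$, $12=ab$, $13=ba$, $14=b$ and reduce each product to $aba$ using elasticity and strong elasticity $\eqref{e3}$. Recall that $aba$ is well defined precisely because $\eqref{e3}$ gives $a\cdot ba = ab\cdot a = ba\cdot b$. Then $11\cdot 13 = a\cdot ba = aba$ and $12\cdot 11 = ab\cdot a = aba$ are immediate, $13\cdot 14 = ba\cdot b = aba$ is the third form of $aba$, and the only mildly delicate one, $14\cdot 12 = b\cdot ab$, is handled by applying $\eqref{e3}$ with the roles of $a$ and $b$ interchanged, which yields $b\cdot ab = ba\cdot b = aba$.

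For the inductive step I would assume the four equalities hold at level $t$, then substitute $(t+1)1 = t1\cdot t2$, $(t+1)2 = t2\cdot t4$, $(t+1)3 = t3\cdot t1$, $(t+1)4 = t4\cdot t3$ into each target product and apply mediality $\eqref{e7}$ to regroup it into a product of two of the level-$t$ expressions $t1\cdot t3$, $t2\cdot t1$, $t3\cdot t4$, $t4\cdot t2$. For instance $(t+1)1\cdot(t+1)3 = (t1\cdot t2)(t3\cdot t1) = (t1\cdot t3)(t2\cdot t1)$, and the inductive hypothesis turns this into $aba\cdot aba$, which equals $aba$ by idempotency $\eqref{e1}$. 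The remaining three products yield to exactly the same mediality-then-idempotency manoeuvre, each producing a pair drawn from the four inductive equalities.

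The computation is essentially routine, so the only real obstacle is organisational: one must choose the mediality grouping in each of the four cases so that the two resulting factors are genuinely among the four hypothesised products $t1\cdot t3$, $t2\cdot t1$, $t3\cdot t4$, $t4\cdot t2$, and one must keep careful track of which base-case reduction requires the full force of strong elasticity rather than mere elasticity. No identity beyond $\eqref{e1}$, $\eqref{e3}$ and $\eqref{e7}$ is needed.
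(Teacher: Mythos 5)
Your proof is correct: the base case reduces each of the four products to $aba$ via strong elasticity \eqref{e3}, and the inductive step's mediality regrouping does in each of the four cases produce two factors from the hypothesised list, so idempotency finishes it. The paper omits this proof, saying only that it is straightforward by induction on the level using the quadratical identities, which is exactly the argument you have supplied.
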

\begin{proposition}\label{P3.11} $Ht=\{t1,t3,t4,t2\}$ is a $4$-cycle based on $aba$. 
\end{proposition}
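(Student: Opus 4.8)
The plan is to verify directly that the four-element set $Ht$, listed in the indicated order, meets the definition of a $4$-cycle based on $aba$. Recall from Section $3$ that a set $\{x_1,x_2,x_3,x_4\}$ is a $4$-cycle based on $aba$ precisely when its four elements are distinct, at least $x_1$ differs from $aba$, and the cyclic products satisfy $aba = x_1x_2 = x_2x_3 = x_3x_4 = x_4x_1$. So the task reduces to matching $Ht$, read off in the correct order, against these requirements.

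First I would extract the intended cyclic ordering from the statement itself, namely $x_1 = t1$, $x_2 = t3$, $x_3 = t4$, $x_4 = t2$. With this labelling the four consecutive products become $t1\cdot t3$, $t3\cdot t4$, $t4\cdot t2$ and $t2\cdot t1$. This is the only genuine choice to be made, since the generators $t1,t2,t3,t4$ do not appear in the set in the order required by the cycle relations.

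Next I would invoke Proposition \ref{P3.10}, which states exactly that $t1\cdot t3 = t3\cdot t4 = t4\cdot t2 = t2\cdot t1 = aba$. This identifies all four cyclic products with $aba$ on the nose, so the defining cycle relation holds for the ordering just fixed. To finish, I would confirm the two non-degeneracy conditions: distinctness of the four elements is Proposition \ref{P3.8}, and the fact that none of them equals $aba$ (in particular $x_1 = t1 \ne aba$, which is all the cycle definition requires to exclude the collapsed case) is Proposition \ref{P3.9}. Assembling these observations shows that $Ht$, in the order $(t1,t3,t4,t2)$, is a $4$-cycle based on $aba$.

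The main obstacle here is extremely mild: it is simply recognizing the correct cyclic reordering of the four generators so that the consecutive products line up with the equalities furnished by Proposition \ref{P3.10}. Once the ordering $t1, t3, t4, t2$ is fixed, the proposition is an immediate corollary of Propositions \ref{P3.8}, \ref{P3.9} and \ref{P3.10}, with no further computation required.
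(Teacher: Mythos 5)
Your proof is correct: the ordering $(t1,t3,t4,t2)$ makes the four consecutive products exactly the ones that Proposition \ref{P3.10} equates to $aba$, and Propositions \ref{P3.8} and \ref{P3.9} supply distinctness and $t1\ne aba$, which is all the definition of a $4$-cycle requires. The paper omits the proof (declaring Propositions \ref{P3.5}--\ref{P3.11} straightforward), but your derivation from the three preceding propositions is evidently the intended argument, and it is complete as written.
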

\begin{definition}\label{D3.12} We say that a {\it groupoid $Q$ is of the form} $Qn$, for some positive integer $n$, if $\;Q=\{aba\}\!\bigcup\limits_{t=1}^n\!\!H t\;$ for some $\{a,b\}\subseteq Q$, where each $Ht$ is as in Definition \ref{D3.1}.
\end{definition}

\section*{\centerline{4. Quadratical quasigroups of form Q1 and Q2 }}\setcounter{section}{4}
\setcounter{theorem}{0}

We are now in a position to examine more closely the Cayley tables of quadra\-tical quasigroups. This will aid in the construction of the tables for quadratical quasigroups of orders $5$, $9$, $13$ and $17$. Dudek \cite{D1} gave two examples of quadratical quasigroups of orders $5$, $13$ and $17$ and six examples of quadratical quasigroups of order $9$. A close examination of the fine structure will aid us in proving that all these quadratical quasigroups are of the form $Qn$, for some positive integer $n$. Each pair of quadratical quasigroups of orders $5$, $13$ or $17$ will be proved to be dual groupoids. The $6$ quadratical quasigroups of order $9$ will be proved to be of form $Q2$ and self-dual. That is, up to isomorphism, there is only one quadratical quasigroup of order $9$.

A method of constructing quadratical quasigroups of the form $Qn$ is as follows. Proposition \ref{P3.6} implies that $aba \cdot Ht =H(t-1)$ for all $t \ne 1$. Since quadratical quasigroups are cancellative, we can assume that $aba \cdot H1 =Hn$. If we choose the value of $aba \cdot 11$ in $Hn = \{n1,n2,n3,n4\}$ then, using the properties of quadratical quasigroups, we can attempt to fill in the remaining unknown products in the Cayley table. If this can be done without contradiction, then, using Theorem \ref{T2.2}, we can check that the groupoid thus obtained is quadratical, by
checking that it is bookend and medial. Completing the Cayley table is this way is not always possible, as shown in the following example.

\begin{example}\label{Ex4.1} Suppose $Q$ is a quadratical quasigroup of the form $Q2$. Then $aba \cdot 11 = aba \cdot a \in H2 = \{ 21,22,23,24\} = \{a \cdot ab,ab \cdot b,ba \cdot a,b \cdot ba\}$. Now $aba \cdot a = a(ba \cdot a)$ and so $aba \cdot a \notin \{a \cdot ab,ba \cdot a\}$, since cancellativity, idempotency and alterability would imply that $a = b$ (if $aba \cdot a = ba \cdot a$) and $b = a \cdot ab$ (if $aba \cdot a = a \cdot ab$), the latter contradicting to the fact that two $4$-cycles based on $aba$ are  equal or disjoint (cf. \cite{DM}). Hence, $aba \cdot a$ must be in the set $\{ab \cdot b,b \cdot ba\}$. However, if $aba \cdot a = b \cdot ba$, then by $(10)$, $ab = ba \cdot aba = (b \cdot ab)a = aba \cdot a = b \cdot ba$, a contradiction since $H1 \cap H2 = \emptyset $.
\end{example}

Example \ref{Ex4.1} shows that $aba \cdot a = ab \cdot b$. Using the properties of quadratical quasigroups, the Cayley table of the groupoid of the form $Q2$ can only be completed in one way, as shown below here, in Table 1. 

We then need to calculate all the possible products $xy \cdot yx$ and $xy \cdot zw$ in Table 1, to prove that they are equal to $y$ and $xz \cdot yw$ respectively. Then, by Theorem \ref{T2.2}, $Q2$ would be quadratical. This proves to be the case and we omit the detailed calculations. However, to give a flavour of the calculations we find all products $aba \cdot x$ and $x\cdot aba$ when $x\in H1$ and $aba \cdot a = ab \cdot b$.

Since $( a \cdot aba)\left( {aba \cdot a} \right) = \left( {a \cdot aba} \right)\left( {ab \cdot b} \right)$, it follows that we have $a\cdot aba = b \cdot ba$, $aba \cdot b = ba \cdot a$,
$aba \cdot ab = \left( {aba \cdot a} \right)\left({aba \cdot b} \right) = \left( {ab \cdot b} \right)\left( {ba \cdot a} \right) = b \cdot ba$ and, si\-mi\-larly $aba \cdot ba = a \cdot ab$. Then $aba \cdot ab = b \cdot ba$ implies
$ba \cdot aba = ab \cdot b$. Also, $aba = \left( {ab \cdot aba} \right)\left( {aba \cdot ab} \right) = \left( {ab \cdot aba} \right)(b \cdot ba)$ implies $ab \cdot aba = ba \cdot a$. Finally,
$b \cdot aba = \left( {ab \cdot aba} \right)\left( {ba \cdot aba} \right) = $ $\left( {ba \cdot a} \right)\left( {ab \cdot b} \right) = a \cdot ab$. \\
                                                         
\noindent
{\small{
\begin{tabular}{|l|c|c|c|c|c|c|c|c|c|}\hline
$\rule{0mm}{3mm}\;\;\;Q2$&$\!\!\!11\!=\!a\!\!\!$&$\!\!\!12\!=\!ab\!\!\!$&$\!\!\!13\!=\!ba\!\!\!$&$\!\!\!\!\!14\!=\!b\!\!\!$\!\!&\!\!$\!\!\!\!\!aba\!\!\!\!\!\!\!$&$\!\!\!21\!=\!a\!\cdot\! ab\!\!\!$&$\!\!\!22\!=\!ab\!\cdot\! b\!\!\!$&$\!\!\!23\!=\!ba\!\cdot\! a\!\!\!$&$\!\!\!24\!=\!b\!\cdot\! ba\!\!\!$\\ \hline
$\!\!11\!=\!a$&$a$&$a\cdot ab$&$aba$&$ab$&$b\cdot ba$&$ba$&$b$&$ab\cdot b$&$ba\cdot a$\\ \hline
$\!\!12\!=\!ab$&$aba$&$ab$&$b$&\!\!\!$\!ab\cdot b\!$&$ba\cdot a$&$b\cdot ba$&$a$&$a\cdot ab$&$ba$\\ \hline
$\!\!13\!=\!ba$&$ba\cdot a$&$a$&$ba$&$aba$&\!$\!ab\cdot b\!$&$ab$&$b\cdot ba$&$b$&$a\cdot ab$\\ \hline
$\!\!14\!=\!b$&$ba$&$aba$&$b\cdot ba$&$b$&\!$\!a\cdot ab\!$&$ab\cdot b$&$ba\cdot a$&$a$&$ab$\\ \hline
$\;\;aba$&$ab\cdot b$&$b\cdot ba$&$a\cdot ab$&\!\!$ba\cdot a$&$aba$&$a$&$ab$&$ba$&$b$\\ \hline
$\!\!21\!=\!a\!\cdot\! ab\!\!\!\!$\!\!&$b\cdot ba$&$b$&$ba\cdot a$&$a$&$ab$&$a\cdot ab$&$ba$&$aba$&$ab\cdot b$\\ \hline
$\!\!22\!=\!ab\!\cdot\! b\!\!\!$\!\!&$a\cdot ab$&$ba\cdot a$&$ab$&$ba$&$b$&$aba$&$ab\cdot b$&$b\cdot ba$&$a$\\ \hline
$\!\!23\!=\!ba\!\cdot\! a\!\!\!$&$ab$&$ba$&$ab\cdot b$&\!\!$b\cdot ba$&$a$&$b$&$a\cdot ab$&$ba\cdot a$&$aba$\\ \hline
$\!\!24\!=\!b\!\cdot\! ba$\!\!\!&$b$&$ab\cdot b$&$a$&\!\!$a\cdot ab$&$ba$&$ba\cdot a$&$aba$&$ab$&$b\cdot ba$\\ \hline
 \end{tabular}}}

\medskip
\centerline{\small Table 1.}

\begin{proposition} 
A quadratical quasigroup $Q$ of order $9$ is of the form $Q=Q2$.
\end{proposition}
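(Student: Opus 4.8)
The plan is to prove that, for arbitrary distinct $a,b\in Q$, one has $Q=\{aba\}\cup H1\cup H2$. The skeleton is a counting argument. By the results of \cite{DM} recalled in Section~3, each element of $Q\setminus\{aba\}$ lies in a $4$-cycle based on $aba$, and two such cycles are either equal or disjoint; hence $Q\setminus\{aba\}$ is a disjoint union of $4$-cycles, and since $|Q|=9$ there are exactly two of them. Now $H1$ and $H2$ are $4$-cycles based on $aba$ (Proposition~\ref{P3.11}), each of size $4$ (Proposition~\ref{P3.8}) and disjoint from $\{aba\}$ (Proposition~\ref{P3.9}). Therefore the whole statement reduces to showing $H1\neq H2$: once that is known, $H1$ and $H2$ are the two cycles, $\{aba\}\cup H1\cup H2$ already has $1+4+4=9$ distinct elements, and so equals $Q$, giving form $Q2$.

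So the heart of the matter is $H1\cap H2=\varnothing$, for which---by the equal-or-disjoint dichotomy---it is enough to show that the single element $21=a\cdot ab$ does not lie in $H1=\{a,ab,ba,b\}$. I would pass to the group representation of Theorem~\ref{T-gr}: write $xy=\varphi(x)+\psi(y)$ on an abelian group $(G,+)$ with $\varphi+\psi=\mathrm{id}$ and $2\varphi\psi=\mathrm{id}$; eliminating $\psi=\mathrm{id}-\varphi$ gives $2\varphi^2-2\varphi+\mathrm{id}=0$, as in \eqref{ea}. Expanding, $a\cdot ab=(2\varphi-\varphi^2)a+\psi^2 b$, while $a$, $ab=\varphi a+\psi b$, $ba=\varphi b+\psi a$ and $b$ are the members of $H1$. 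Each of the four coincidences $a\cdot ab\in\{a,ab,ba,b\}$ then collapses, after using $2\varphi^2-2\varphi+\mathrm{id}=0$, to an equation $M(a-b)=0$ with $M$ equal to $\psi^2$, $\varphi\psi$, $2\varphi-\tfrac12\mathrm{id}$, $\varphi+\tfrac12\mathrm{id}$ respectively. Since $a\neq b$, a contradiction follows as soon as the relevant $M$ is invertible.

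The first two cases are immediate, since $\varphi,\psi$ are automorphisms and hence $\psi^2$ and $\varphi\psi$ are bijective. The genuine obstacle is the last two, $a\cdot ab=ba$ and $a\cdot ab=b$, because the cycle structure and the symmetric identities \eqref{e3}--\eqref{e10} are by themselves consistent with $H1=H2$; here I would exploit that $|Q|=9$. As \eqref{ea} has no solution modulo $9$, Theorem~\ref{T2.4} excludes $\mathbb{Z}_9$, so $G\cong\mathbb{Z}_3\times\mathbb{Z}_3$ is elementary abelian; modulo $3$ the operators $4\varphi-\mathrm{id}$ and $2\varphi+\mathrm{id}$ (the nontrivial factors of $2\varphi-\tfrac12\mathrm{id}$ and $\varphi+\tfrac12\mathrm{id}$) are invertible, because in each case a zero eigenvalue would force $\varphi$ to have eigenvalue $1$, and $1$ is not a root of $2\lambda^2-2\lambda+1$. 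This forces $a=b$ in both cases, the desired contradiction, so $21\notin H1$, whence $H1\neq H2$ and the counting argument of the first paragraph completes the proof. One can instead remain inside the identity calculus, reducing these two cases via Propositions~\ref{P3.6} and \ref{P3.7} together with \eqref{e3}--\eqref{e6} to a cancellation; but the linear-algebra route above is shorter and avoids the loops created by the symmetric identities, which is why I would choose it.
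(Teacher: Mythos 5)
Your proof is correct, and it reaches the conclusion by a genuinely different route from the paper's. The paper also starts from the decomposition $Q=H1\cup\{aba\}\cup C$ with $C$ the second $4$-cycle, but it then stays entirely inside the identity calculus: it reads off from the partial Cayley table that $ba\cdot a\in H1\cup\{aba\}$ would force $ba\cdot a\in\{ab,b\}$, shows that either choice makes $(H1\cup\{aba\})\cdot H1=H1\cup\{aba\}$, which is incompatible with cancellativity and idempotency (a row indexed by $c\in C$ would then give $c\in\{ca,\,c\cdot ab,\,c\cdot ba,\,cb\}$), and, once $ba\cdot a\in C$ is known, solves for the remaining members of $C$ one at a time to identify $C=H2$ explicitly. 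You instead reduce everything, via the same equal-or-disjoint counting skeleton, to the single claim $a\cdot ab\notin H1$, and settle that by passing through Theorem~\ref{T-gr} to linear algebra over $\mathbb{F}_3^2$ (after using Theorem~\ref{T2.4} and the non-solvability of \eqref{ea} mod $9$ to exclude $\mathbb{Z}_9$). Your computations check out: the four coincidences do collapse to $\psi^2(a-b)=0$, $\varphi\psi(a-b)=0$, $(2\varphi-\tfrac12\mathrm{id})(a-b)=0$ and $(\varphi+\tfrac12\mathrm{id})(a-b)=0$, and modulo $3$ the last two operators are in fact just $\pm\psi$, so all four are invertible. Your remark that the last two cases cannot be excluded by identities alone is also right --- $a\cdot ab=ba$ holds in $Q1$ and $a\cdot ab=b$ holds in $(Q1)^*$ --- so the order hypothesis must enter somewhere, and your argument isolates exactly where. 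What the paper's route buys is self-containedness (no appeal to the representation theorem or to arithmetic mod $9$) and, as a by-product, the explicit values $ba\cdot a=23$, $ab\cdot b=d$, etc., which are precisely what is needed to assemble Table~1 immediately afterwards; what yours buys is a shorter case analysis and a cleaner view of which steps are general and which are specific to order $9$.
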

\begin{proof} We have $Q=H1\cup\{aba\}\cup C$, where $C$ is a $4$-cycle based on $aba$ and $C\cap H1=\emptyset$. We proceed to prove that $C=H2$.

Consider the following part of the Cayley table: $(H1\cup\{aba\})\cdot H1$. 

\medskip\centerline{
{\small{
\begin{tabular}{|c|c|c|c|c|c|c|c|c|}\hline
$\rule{0mm}{3mm}\;Q$&$\!\!\!a\!\!\!$&$\!\!\!ab\!\!\!$&$\!\!\!ba\!\!\!$&$\!\!\!b\!\!\!$\\ \hline
$a$&$a$&&$aba$&$ab$\\ \hline
$ab$&$aba$&$ab$&$b$&\\ \hline
$ba$&&$a$&$ba$&$aba$\\ \hline
$b$&$ba$&$aba$&&$b$\\ \hline
$aba$&&&&\\ \hline
 \end{tabular}}}}

\medskip

From the table, clearly, if $ba\cdot a\in H1\cup\{aba\}$, then $ba\cdot a\in\{ab,b\}$.

Assume that $ba\cdot a=ab$. Then we have $a=b\cdot ba$, \ $ab\cdot b=(ba\cdot a)b=(ba\cdot b)\cdot ab=aba\cdot ab=a(ba\cdot b)=(b\cdot ba)(ba\cdot b)=b(ba\cdot ab)=ba$  and $b=a\cdot ab$. Also, $aba\cdot a=a(ba\cdot a)=a\cdot ab=b$, \ $aba\cdot ab=ab\cdot (a\cdot ab)=ab\cdot b=ba$, \ $aba\cdot b=bab\cdot b=b(ab\cdot b)=b\cdot ba=a$ and $aba\cdot ba=(aba\cdot b)(aba\cdot a)=ab$. So, we have proved that $(H1\cup\{aba\})\cdot H1=H1\cup\{aba\}$.

Similarly, if  $ba\cdot b=b$, then $(H1\cup\{aba\})\cdot H1=H1\cup\{aba\}$, which is not possible because, if $c\in C$, then $c\in C=\{ca,c\cdot ab,c\cdot ba,cb\}$, a contradiction. So, $ba\cdot a=c$, for some $c\in C$. Then, since $C=\{c,dc,d,cd\}$ for some $d\in C$, we have $aba=c\cdot dc=dc\cdot d=d\cdot cd=cd\cdot c$. So, $aba=(ba\cdot a)\cdot dc$, which implies $dc=b\cdot ba$. Also, $aba=cd\cdot (ba\cdot a)$, which implies $cd=a\cdot ab$. Then, $aba=dc\cdot d=(b\cdot ba)d$, which gives $d=ab\cdot b$. Hence, $C=\{a\cdot ab,ab\cdot b,ba\cdot   a, b\cdot ba\}=H2$.  
\end{proof}
So, we have proved that a quadratical quasigroup of order 9 must be the quasigroup Q2.

\medskip\noindent
{\bf Open question.} {\it Is a finite, idempotent, alterable, cancellative, elastic groupoid of  form $Qn$ quadra\-tical?} 

Note that we can prove that the answer is affirmative when $n=1$ or $n=2$.

\bigskip
Now, if we calculate the Cayley table for $(Q2)^*$, the dual of $Q2$, we see that the table for the dual product $*$ (defined as $a * b = b \cdot a$) is exactly the same as Table $1$, where the product is the dot product $\cdot $. (For example, $(( b*a)*a)*(b*a)=(a\cdot ab)*ab=ab\cdot (a\cdot ab) =b\cdot ba =(a*b)*b$ and, by Table $1$,
$(ba \cdot a)\cdot ba = ab\cdot b$). Hence, $Q2 \cong (Q2)^*$. Another way to put this is that the quadratical groupoid $Q2$ must be self-dual. An isomorphism $\theta $ between $Q2$ and $(Q2)^*$ is: $\theta a = a$, $\theta b = b$, $\theta (ab) = a * b$,
$\theta (ba) = b * a$, $\theta (a \cdot ab) = a * (a * b)$, $\theta (ab\cdot b) = (a*b)*b$, $\theta (ba \cdot a) = (b*a)*a\,$ and $\,\theta (b \cdot ba) = b*(b*a)$. 

\begin{example}\label{Ex4.2} It is straightforward to calculate the Cayley tables of the quadra\-tical quasigroups, each of order $9$, given in \cite{D1}. They are each based on the group $\mathbb{Z}_3\times\mathbb{Z}_3$ of ordered pairs of integers, with product being addition (mod 3). The products are defined as follows:

\smallskip
$\left( {x,y} \right) *_1 \left( {z,u} \right) = \left( {y + z + 2u,{\rm{ }}x + y + 2z} \right)$,

$\left( {x,y} \right) *_2 \left( {z,u} \right) = \left( {2y + z + u,{\rm{ 2}}x + y{\rm{ + }}z} \right),$ 

$\left( {x,y} \right) *_3 \left( {z,u} \right) = \left( {x + y + 2u,{\rm{ }}x + 2z + u} \right)$,

$\left( {x,y} \right) *_4 \left( {z,u} \right) = \left( {x + 2y + u,{\rm{ 2}}x + z + u} \right),$

$(x,y) *_5 \left( {z,u} \right) = \left( {2x + y + 2z + 2u,{\rm{ 2}}x + 2y + z + 2u} \right)$,

$(x,y) *_6 \left( {z,u} \right) = \left( {2x + 2y + 2z + u,{\rm{ }}x + 2y{\rm{ + 2}}z + 2u} \right).$

\medskip
In each table, if we calculate $ab$ and $ba$ for the ordered pairs $a = (1,1)$ and $b = (1,2)$ we see that $Q=\{aba\}\cup H1\cup H2$ and that $aba\cdot a = ab\cdot b$. Therefore, these six quadratical quasigroups are isomorphic to each other and to $Q2$. We already knew that there is only one quadratical quasigroup of order $9$, but these calculations clarify (and reinforce a conviction) that the quadratical quasigroups of order $9$ presented in \cite{D1} are isomorphic. 
\end{example}

\begin{example}\label{Ex4.3} We now calculate the Cayley table for a groupoid $Q1$ and its dual, when $aba\cdot a \in\{ab,b\}$. \\ 
                                                                         
																																
\noindent
{\small{\begin{tabular}{lcr}
\begin{tabular}{|c|c|c|c|c|c|}\hline
$\rule{0mm}{3mm}Q1$\!\!&$a$&$ab$&$ba$&$b$&$aba$\\ \hline
$a$&$a$&$ba$&$aba$\!&$ab$&$b$\\ \hline
$ab$&$aba$\!&$ab$&$b$&$a$&$ba$\\ \hline
$ba$&$b$&$a$&$ba$&$aba$\!&$ab$\\ \hline
$b$&$	ba$&$aba$\!&$ab$&$b$&$a$\\ \hline
$aba$\!&$ab$&$b$&$a$&$ba$&$aba$\!\\ \hline
\end{tabular}
& \ \ \ &
\begin{tabular}{|c|c|c|c|c|c|}\hline
$\rule{0mm}{3mm}(Q1)^*$\!\!\!\!&$a$&\!\!$b*a$\!\!&\!\!$a*b$\!\!&$b$&\!\!$aba$\!\!\\ \hline
$a$&$a$&$aba$&$b$&\!\!$a*b$\!\!&\!\!\!\!$b*a$\!\!\!\!\\ \hline
\!\!$b*a$\!\!&$a*b$&\!\!$b*a$\!\!&$a$&\!\!$aba$\!\!&$b$\\ \hline
\!\!$a*b$\!\!&\!\!$aba$\!\!&$b$&\!\!$a*b$\!\!&$b*a$&$a$\\ \hline
$b$&\!\!$b*a$\!\!&$a$&$aba$&$b$&\!\!\!\!$a*b$\!\!\!\!\\ \hline
$aba$&$b$&\!\!$a*b$\!\!&\!\!$b*a$\!\!&$a$&$aba$\\ \hline
\end{tabular}
\end{tabular}}}

\medskip
\centerline{\small Table 2.}

\medskip

Checking these tables shows that each is medial and bookend and that, indeed, these two quadratical quasigroups are dual.
\end{example}

\medskip\noindent
{\bf Open question.} {\it Examining Tables $1$ and $2$ closely, we can show that any two distinct elements of $Q1$ $($resp. $(Q1)^*$, $Q2)$ generate $Q1$ $($resp. $(Q1)^*$, $Q2)$. This will later be seen to be the case also for $Q3$, $Q4$ and their duals. We conjecture that if $Q$ is a quadratical quasigroup of form $Qn$, for some positive integer $n$, then it is gene\-rated by any two distinct elements. Such a property does not hold in quadratical quasigroups in general, as we shall now prove.

\noindent
\begin{example}\label{Ex4.4} Since $Q$ is a variety of groupoids, the direct product of quadratical quasigroups is quadratical. Hence, $Q1\times Q1$ is quadratical. If we choose a base element, $(a,b)$ say, then $Q1\times Q1$ consists of six disjoint $4$-cycles based on $(a,b)$; namely, 

\smallskip
$\{(a,a),(a,aba),(a,ab),(a,ba)\}$, \ \ \ \ \ \,$\{(b,ab),(aba,ba),(ba,a),(ab,aba)\}$,

\smallskip
$\{(ab,b),(b,b),(aba,b),(ba,b)\}$, \ \ \ \ \ \ \ $\{(ab,ab),(b,ba),(aba,a),(ba,aba)\}$, 

\smallskip
$\{(ba,ba),(ab,a),(b,aba),(aba,ab)\}$, \ \ $\{(aba,aba),(ba,ab),(ab,ba),(b,a)\}$. 

\medskip
If $C$ is any one of these six $4$-cycles, then no two distinct elements $x$ and $y$ of $C$ generates $Q1\times Q1$, because $\{x,y\}\subseteq C$ and $C$ is a proper subquadratical quasigroup of $Q1\times Q1$, isomorphic to $Q1$.
\end{example}
\begin{example}\label{Ex4.5} $(Q1\times Q1)^*=(Q1)^*\times (Q1)^*$ and $\left(Q1\times (Q1)^*\right)^*= (Q1)^*\times Q1$.  Note that $(a,ba)$ and $(ab,b)$ generate $Q1\times (Q1)^*$ and $(ba,a)$ and $(b,ab)$ generate $(Q1)^*\times Q1$ while $Q1\times Q1$ and $(Q1)^*\times (Q1)^*$ are not $2$-generated.
\end{example}

\section*{\centerline{5. The elements nk*}}\setcounter{section}{5}
\setcounter{theorem}{0}

\rm
The following Theorem is easily proved for $k=1$  and, by induction on $k$, is straightforward to prove for all $k\in\{0,1,2,\ldots\}=\mathbb{N}_0$. The proof is omitted.

\begin{theorem}\label{T5.1} For all $k\in \mathbb{N}_0$,

\smallskip

\hspace*{-8mm}
{\footnotesize $ 
\begin{array}{lllllll}
(\!(1\!+\!4k)1)^*\!\!=\!(1\!+\!4k)1,&\! (\!(1\!+\!4k)2)^*\!\!=\!(1\!+\!4k)3,&\! (\!(1\!+\!4k)3)^*\!\!=\!(1\!+\!4k)2,&\!(\!(1\!+\!4k)4)^*\!\!=\!(1\!+\!4k)4,\\
(\!(2\!+\!4k)1)^*\!\!=\!(2\!+\!4k)3,&\! (\!(2\!+\!4k)2)^*\!\!=\!(2\!+\!4k)4,&\! (\!(2\!+\!4k)3)^*\!\!=\!(2\!+\!4k)1,&\!(\!(2\!+\!4k)4)^*\!\!=\!(2\!+\!4k)2,\\
(\!(3\!+\!4k)1)^*\!\!=\!(3\!+\!4k)4,&\! (\!(3\!+\!4k)2)^*\!\!=\!(3\!+\!4k)2,&\! (\!(3\!+\!4k)3)^*\!\!=\!(3\!+\!4k)3,&\!(\!(3\!+\!4k)4)^*\!\!=\!(3\!+\!4k)1,\\
(\!(4\!+\!4k)1)^*\!\!=\!(4\!+\!4k)2,&\! (\!(4\!+\!4k)2)^*\!\!=\!(4\!+\!4k)1,&\! (\!(4\!+\!4k)3)^*\!\!=\!(4\!+\!4k)4,&\!(\!(4\!+\!4k)4)^*\!\!=\!(4\!+\!4k)3.
\end{array} 
 $}
 \end{theorem}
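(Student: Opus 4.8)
The plan is to prove all sixteen identities simultaneously by induction on $n$ (where $n=1+4k,2+4k,3+4k,4+4k$), treating $n$ rather than $k$ as the primary variable; the period-$4$ behaviour in $k$ then emerges automatically from the one-step transition. The only ingredients are the recursive definitions of $nk$ and $nk^*$ (Definition \ref{D3.1} and Example \ref{Ex3.3}) together with the defining relation $x*y=y\cdot x$ of the dual groupoid. First I would rewrite the dual recursion entirely in terms of the original product: since $n1^*=(n-1)1^**(n-1)2^*$ and $x*y=y\cdot x$, the four defining equations become
\[
n1^*=(n-1)2^*\cdot(n-1)1^*,\qquad n2^*=(n-1)4^*\cdot(n-1)2^*,
\]
\[
n3^*=(n-1)1^*\cdot(n-1)3^*,\qquad n4^*=(n-1)3^*\cdot(n-1)4^*.
\]
The key structural observation is that the ordered index-pairs occurring here, namely $(2,1),(4,2),(1,3),(3,4)$, are exactly the reversals of the pairs $(1,2),(2,4),(3,1),(4,3)$ that define $n1,n2,n3,n4$ in the original tower.

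For the base case $n=1$ the claim is immediate: $11^*=a=11$, $12^*=a*b=b\cdot a=ba=13$, $13^*=b*a=ab=12$ and $14^*=b=14$, which is precisely the $n\equiv1$ row. For the inductive step I would assume the result for $n-1$ and split into the four cases $n-1\equiv1,2,3,0\pmod 4$. In each case the inductive hypothesis supplies a permutation $\sigma$ of $\{1,2,3,4\}$ with $(n-1)j^*=(n-1)\sigma(j)$; substituting this into the rewritten dual recursion turns each product $(n-1)i^*\cdot(n-1)j^*$ into $(n-1)\sigma(i)\cdot(n-1)\sigma(j)$. One then checks that the pair $(\sigma(i),\sigma(j))$ is one of the four defining pairs $(1,2),(2,4),(3,1),(4,3)$, so that the product equals the corresponding $n\ell$ by the definition of the original tower, and reads off the claimed row for $n$. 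For instance, when $n-1\equiv1$ one has $\sigma=(2\,3)$, whence $n1^*=(n-1)3\cdot(n-1)1=n3$, matching the $n\equiv2$ row; the remaining three residue classes are analogous, and since the four transitions form a cycle $\sigma$ returns to its original value after four steps, which is exactly why the table has period $4$ in $k$.

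The computation in each of the sixteen sub-cases is routine, so the real content of the argument is organizational, and the proof is best presented as a single table of the four cases rather than grinding through each line. The one point that genuinely must be \emph{verified} rather than assumed is the structural coincidence noted above: after applying the inductive permutation, every product that arises is literally one of the four defining products of the original recursion, in the correct order, so that no quadratical identity beyond the recursive definitions and the duality $x*y=y\cdot x$ is ever needed. This is where I expect the only real care to be required: were the matching to fail for some index, one would be forced to reduce the resulting stray product using the relations of Propositions \ref{P3.5} and \ref{P3.10}. The substance of the theorem is precisely that this never happens, and confirming the four residue classes is exactly what rules it out.
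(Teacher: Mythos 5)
Your proof is correct and is essentially the induction on the tower level that the paper merely alludes to ("by induction on $k$\dots straightforward\dots proof is omitted"): your single-step induction on $n$, with the observation that each permutation $\sigma\in\{\mathrm{id},(2\,3),(1\,3)(2\,4),(1\,4),(1\,2)(3\,4)\}$ sends the reversed dual pairs $(2,1),(4,2),(1,3),(3,4)$ exactly onto the defining pairs $(1,2),(2,4),(3,1),(4,3)$, checks out in all four residue classes and needs nothing beyond Definition \ref{D3.1}, Example \ref{Ex3.3} and $x*y=y\cdot x$. This correctly supplies the details the paper omits, and packaging the step as a single transition whose fourth power is the identity is a clean way to exhibit the period-$4$ pattern.
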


\smallskip

Further, for simplicity, elements of the form $(xy)^*$ will be denoted as $xy^*$.

\smallskip

Now, considering the quadratical quasigroups of form $Qn$, from the remarks in the paragraph preceding Example \ref{Ex4.1}, we see that there are at most $4$ groupoids of the form $Qn$ for any given integer $n$. Since the dual of a quadratical quasigroup of the form $Qn$ must also have the form $Qn$, we can tell, from the following Theorem, which values of $aba\cdot a$ may yield groupoids that are duals of each other.  

\begin{theorem}\label{T5.2} For all positive integers $n\geqslant 2$, the following identities are valid in a quadratical quasigroup of form $Qn$, depending on the value of \ $aba\cdot a$:\\

 \noindent\hspace*{-2mm}
{\small{
\begin{tabular}{|c|c|c|c|c|c|c|c|c|c|c|c|c|c|c|c|}\hline
\!\!$\rule{0mm}{3mm}aba\!\cdot\!a$\!\!&\!\!$aba\!\cdot\!ab$\!\!&\!\!$aba\!\cdot\!ba$\!\!&\!\!$aba\!\cdot\!b$\!\!&\!\!$a\!\cdot\!aba$\!\!&\!\!$ab\!\cdot\!aba$\!\!&\!\!$ba\!\cdot\!aba$\!\!&\!\!$b\!\cdot\!aba$\!\!&\!\!$n1\!\cdot\!n2$\!\!&\!\!$n2\!\cdot\!n4$\!\!&\!\!$n3\!\cdot\!n1$\!\!&\!\!$n4\!\cdot\!n3$\!\!\\ \hline

$n1$&$n2$&$n3$&$n4$&$n2$&$n4$&$n1$&$n3$&$a$&$ab$&$ba$&$b$\\ \hline

$n2$&$n4$&$n1$&$n3$&$n4$&$n3$&$n2$&$n1$&$ba$&$a$&$b$&$ab$\\ \hline

$n3$&$n1$&$n4$&$n2$&$n1$&$n2$&$n3$&$n4$&$ab$&$b$&$a$&$ba$\\ \hline
$n4$&$n3$&$n2$&$n1$&$n3$&$n1$&$n4$&$n2$&$b$&$ba$&$ab$&$a$\\ \hline
\end{tabular}}}

 \bigskip
  \hspace{10mm}{\small{
\begin{tabular}{|c|c|c|c|c|c|c|c|c|c|c|c|c|c|c|c|}\hline
\!\!$aba\!\cdot\!a$\!\!&\!\!$11\!\cdot\!34$\!\!&\!\!$23\!\cdot\!14$\!\!&\!\!$34\!\cdot\!14$\!\!&\!~\!$14\!\cdot\!21$\!\!&\\ \hline

$n1$&$n3$&$n2$&$n1$&$n1$&$(n\!-\!1)2=11\!\cdot\! n1=n2\!\cdot\! 11$\\ \hline

$n2$&$n1$&$n4$&$n2$&$n2$&$(n\!-\!1)4=11\!\cdot\! n2=n4\!\cdot\! 11$\\ \hline

$n3$&$n4$&$n1$&$n3$&$n3$&$(n\!-\!1)1=11\cdot\! n3=n1\!\cdot\! 11$\\ \hline
$n4$&$n2$&$n3$&$n4$&$n4$&$(n\!-\!1)3=11\!\cdot\! n4=n3\!\cdot\! 11$\\ \hline
\end{tabular}}}
\end{theorem}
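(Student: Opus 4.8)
The plan is to treat the two tables as a single bookkeeping exercise driven by one structural fact: by left distributivity \eqref{e5}, every left translation $L_x\colon y\mapsto xy$ is an endomorphism of $Q$, and being a bijection in a quasigroup it is an automorphism; dually, by right distributivity \eqref{e6}, every right translation $R_z\colon y\mapsto yz$ is an automorphism. I would apply this to $L_{aba}$ and $R_{aba}$. By idempotency \eqref{e1} both fix $aba$, and by Proposition \ref{P3.6} (resp. \ref{P3.7}) the map $L_{aba}$ (resp. $R_{aba}$) carries $Ht$ onto $H(t-1)$ for every $t\geqslant 2$. Since $L_{aba}$ is a bijection of $Q=\{aba\}\cup\bigcup_{t=1}^n Ht$ fixing $aba$ and permuting $H2,\dots,Hn$ onto $H1,\dots,H(n-1)$, it must map $H1$ bijectively onto $Hn$; hence $aba\cdot a\in\{n1,n2,n3,n4\}$, which is exactly the four-case split recorded in the first column, and gives a clean reason for the ``at most four groupoids'' remark preceding Example \ref{Ex4.1}.

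First I would fill the block $aba\cdot ab,\ aba\cdot ba,\ aba\cdot b$. Assuming $aba\cdot a=n1$ (the first row), I propagate this single value around the four-cycle $H1$ using the relations $11\cdot 13=13\cdot 14=14\cdot 12=12\cdot 11=aba$ of Proposition \ref{P3.10} together with the automorphism property of $L_{aba}$: applying $L_{aba}$ to $11\cdot 13=aba$ gives $n1\cdot L_{aba}(13)=aba$, and since $n1\cdot n3=aba$ is the unique product in $Hn$ with first factor $n1$ (Proposition \ref{P3.10} for $t=n$, plus cancellativity) we get $aba\cdot ba=n3$; continuing along $13\to14\to12$ yields $aba\cdot b=n4$ and $aba\cdot ab=n2$. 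The three other rows are identical, merely starting the rotation at $n2$, $n3$ or $n4$.

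Next I would reduce the right-translation block $a\cdot aba,\ ab\cdot aba,\ ba\cdot aba,\ b\cdot aba$ and the diagonal block $n1\cdot n2,\ n2\cdot n4,\ n3\cdot n1,\ n4\cdot n3$ to the first block. For the former, I write $aba$ as a product of two $H1$-elements chosen so that \eqref{e5} converts $1k\cdot aba$ into an $aba\cdot 1j$ already known: e.g. $11\cdot aba=11\cdot(13\cdot 14)=(11\cdot 13)(11\cdot 14)=aba\cdot 12=n2$, using \eqref{e5} and Propositions \ref{P3.10} and \ref{P3.5}. For the latter, I use that $L_{aba}$ sends $1k\mapsto nk$ (row one) together with the recursion of Definition \ref{D3.1}: $n1\cdot n2=L_{aba}(11)\cdot L_{aba}(12)=L_{aba}(11\cdot 12)=L_{aba}(21)=aba\cdot 21=11=a$ by Proposition \ref{P3.6}, and similarly for the other three. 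These diagonal identities are precisely the compatibility between the rotation $H1\to Hn$ and the label-preserving shifts $Ht\to H(t-1)$, so they hold automatically. The second table is handled the same way: each of $11\cdot 34$, $23\cdot 14$, $34\cdot 14$, $14\cdot 21$ is expanded through the recursion and then collapsed by one or two applications of \eqref{e5}, \eqref{e6} and Propositions \ref{P3.5}--\ref{P3.10} to an entry of the first table, while the last column ($(n-1)2=11\cdot n1=n2\cdot 11$, etc.) follows from Proposition \ref{P3.7} after rewriting, e.g. $n2\cdot 11=(aba\cdot 12)\cdot 11=(aba\cdot 11)(12\cdot 11)=n1\cdot aba=(n-1)2$ via \eqref{e6} and Proposition \ref{P3.10}.

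The main obstacle I anticipate is organizational rather than conceptual: the entries involving $H2$- and $H3$-elements, especially $11\cdot 34$, require unwinding the recursion in Definition \ref{D3.1} one or two levels (so that auxiliary products such as $11\cdot 24$ enter) and then reassembling them without circularity, and one must verify that every reduction terminates in a genuinely earlier quantity. Care is also needed to keep the argument purely within the abstract identities and Propositions \ref{P3.5}--\ref{P3.11}, rather than borrowing facts that hold only in the $\mathbb{Z}_m$-model of Theorem \ref{T2.4}. Once the first row and both tables are verified in this disciplined way, the remaining three rows follow verbatim by rotating the cyclic relations of $Hn$ in step with the assumed value of $aba\cdot a$.
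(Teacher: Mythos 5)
Your proposal is correct, and it reaches the tables by a genuinely different organization from the paper's. The paper's proof (carried out for the representative case $aba\cdot a=n2$) is a single chain of substitutions driven by the bookend identity: it starts from $aba=(a\cdot aba)(aba\cdot a)$ to locate $a\cdot aba$ inside $Hn$ via Proposition \ref{P3.11}, converts that to $aba\cdot ab$ through strong elasticity and left distributivity, and then works through the remaining entries one at a time, getting the diagonal block from further bookend instances such as $n1\cdot n2=(aba\cdot ba)(ba\cdot aba)=ba$. You instead take as organizing principle that $L_{aba}$ and $R_{aba}$ are automorphisms fixing $aba$: pushing the relations $11\cdot 13=13\cdot 14=14\cdot 12=12\cdot 11=aba$ of Proposition \ref{P3.10} through $L_{aba}$ and cancelling inside $Hn$ rotates the entire first block out of the single datum $aba\cdot a$, and the homomorphism property yields the diagonal identities in one line each, e.g. $n1\cdot n2=L_{aba}(11\cdot 12)=aba\cdot 21=a$. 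Your counting argument that $L_{aba}(H1)=Hn$ also makes rigorous the ``at most four groupoids'' assertion the paper states only informally before Example \ref{Ex4.1}. What the paper's route buys is an explicit, independently checkable equation for every entry; what yours buys is uniformity, since the other three rows and the $1k\cdot aba$ block follow by the same rotation rather than by ``similar'' computations. The one place you are thinner than the paper is the second table ($11\cdot 34$, $23\cdot 14$, $34\cdot 14$, $14\cdot 21$ and the last column), which you describe but do not execute; the paper does carry these out (e.g. $11\cdot 34=(ab\cdot aba)(aba\cdot a)$ after two uses of \eqref{e5}), and your verified sample $n2\cdot 11=(aba\cdot 11)(12\cdot 11)=n1\cdot aba=(n-1)2$ shows your method closes them as well, so this is bookkeeping rather than a gap.
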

\smallskip
\begin{proof} We prove only the identities for when $aba\cdot a=n2$, as the proofs of the other three cases are similar. We have $aba\cdot n2$. Then, $aba=(a\cdot aba)(aba\cdot a)=(a\cdot aba)\cdot n2$. By Proposition \ref{P3.11} and Theorem \ref{T2.1}, $a\cdot aba=n4=a\cdot bab=aba\cdot ab$. Then, $n4=aba\cdot ab=(aba\cdot a)(aba\cdot b)=n2\cdot (aba\cdot b)$. By Proposition \ref{P3.5}, $aba\cdot b=n3$. So $aba\cdot ba=(aba\cdot b)(aba\cdot a)=n3\cdot n2=n1$  (by Proposition \ref{P3.5}). Then, $aba=(b\cdot aba)(aba\cdot b)=(b\cdot aba)\cdot n3$, which by Proposition \ref{P3.11} implies $b\cdot aba=n1$. Then, using Proposition \ref{P3.5}, $ab\cdot aba=(a\cdot aba)(b\cdot aba)=n4\cdot n1=n3$ and $ba\cdot aba=(b\cdot aba)(a\cdot aba)=n1\cdot n4=n2$. We also have $n1\cdot n2=(aba\cdot ba)(ba\cdot aba)=ba$, $n2\cdot n4=(aba\cdot a)(a\cdot aba))=a$, \ $n3\cdot n1=(aba\cdot b)(b\cdot aba)=b$ and $n4\cdot n3=(aba\cdot ab)(ab\cdot aba)=ab$. Now, $11\cdot 34=a\cdot (b\cdot ba)(ba\cdot a)=a(b\cdot ba)\cdot a(ba\cdot a)=(ab\cdot aba)(aba\cdot a)=n3\cdot n2=n1$, \ $34\cdot 14=(b\cdot ba)(ba\cdot a)\cdot b=(b\cdot ba)b\cdot (ba\cdot a)b=(b\cdot bab)(bab\cdot ab)=(b\cdot aba)(aba\cdot ab)=n1\cdot n4=n2$, \ $14\cdot 21=b(a\cdot ab)=ba\cdot bab=ba\cdot aba=n2$ \ and \ $23\cdot 14=(ba\cdot a)b=bab\cdot ab=aba\cdot ab=n4$.       

Finally, $a\cdot n2=a\cdot aba\cdot a=aba\cdot a\cdot aba=aba\cdot n4=(n-1)4=11\cdot n2$ and
$n4\cdot a=a\cdot aba\cdot a=aba\cdot a\cdot aba=(n-1)4=n4\cdot 11$. 

This completes the proof of the validity of the identities indicated in row $3$ of the two tables in Theorem \ref{T5.2}, when $aba\cdot a=n2$. 
\end{proof} 

As mentioned above, Theorem \ref{T5.2} will be useful when we look for the duals of the quadratical quasigroups that we will call $Q3$ and $Q4$, as will the following concept.

\begin{definition}\label{D5.3} If a quadratical quasigroup of form $Qn$ exists for some integer $n$ then the identity generated on the left (on the right) by an identity $kr\cdot ls=mt$, where $r,s,t\in\{1,2,3,4\}$ and $k,l,m\leqslant n$, is defined as the identity 
$$(aba\cdot kr)(aba\cdot ls)=aba\cdot mt  \ \ \ \ ({\rm resp.}\; (kr\cdot aba)(ls\cdot aba)=mt\cdot aba)
$$
 and $kr\cdot ls=mt$ is called the {\it generating identity}. 
\end{definition}

Note that Propositions \ref{P3.6} and \ref{P3.7}, along with Theorem \ref{T5.2}, give the means of calculating identities generated on the left and right by a given identity. Multiplying on the left (or on the right) repeatedly $n$-times gives $n$ distinct identities. These methods will later be used to prove that quadratical quasigroups of the form $Q6$ do not exist. 

\section*{\centerline{6. Quadratical quasigroups of forms Q3 and Q4}}\setcounter{section}{6}
\setcounter{theorem}{0}

We give the Cayley tables of quadratical quasigroups of orders $13$ and $17$. 

First we note that for a quadratical quasigroup of form $Q3$, if $aba\cdot a=n3=33=(ba\cdot a)(a\cdot ab)$, then $aba\cdot a=a(ba\cdot a)=(a\cdot ab)\cdot aba=ab$, which implies, by cancellation, $ba\cdot a=b$, a contradiction because $H1\cap H2=\emptyset$. If $aba\cdot a=n4=34=(b\cdot ba)(ba\cdot a)$, then $ab\cdot aba=a(b\cdot  ba)=(ba\cdot a)\cdot aba=a$, which implies $b\cdot ba=a$, a contradiction. Hence, $aba\cdot a\in\{31,\,32\}=\{(a\cdot ab)(ab\cdot b),\,(ab\cdot b)(b\cdot ba)\}$. Setting $aba\cdot a=a\cdot ab$ and using the properties of quadratical quasigroups (Theorem \ref{T2.1}) we obtain the Cayley Table $3$. It can be checked that it is medial and bookend and so, by Theorem \ref{T2.2}, this groupoid is a quadratical quasigroup.\\    

\medskip
{\small $
\begin{array}{|c|c|c|c|c|c|c|c|c|c|c|c|c|c|c|c|}\hline\rule{0mm}{3mm}
Q3&11&12&13&14&aba&21&22&23&24&31&32&33&34\\ \hline
11&11&21&aba&12&32&14&23&31&34&22&13&24&33\\ \hline
12&aba&12&14&22&34&32&13&33&21&23&24&31&11\\ \hline
13&	23&11&13&aba&31&24&32&12&33&14&34&21&22\\ \hline
14&13&aba&24&14&33&31&34&22&11&32&21&12&23\\ \hline
aba&31&32&33&34&aba&11&12&13&14&21&22&23&24\\ \hline
21&32&23&34&13&12&21&31&aba&22&24&33&11&14\\ \hline
22&33&34&11&21&14&aba&22&24&32&12&23&13&31\\ \hline
23&24&14&31&32&11&33&21&23&aba&34&12&22&13\\ \hline
24&12&31&22&33&13&23&aba&34&24&11&14&32&21\\ \hline
31&34&13&21&24&22&12&33&14&23&31&11&aba&32\\ \hline
32&22&33&23&11&24&13&14&21&31&aba&32&34&12\\ \hline
33&14&22&32&23&21&34&24&11&12&13&31&33&aba\\ \hline
34&21&24&12&31&23&22&11&32&13&33&aba&14&34\\ \hline
\end{array}$}

\medskip
\centerline{\small Table 3.}

\medskip

There are then two ways to obtain the Cayley table for $(Q3)^*$. Firstly, we can use $aba*a=32^*=[(a*b)*b]*[b*(b*a)]$ and, using the properties of quadratical quasigroups, we can then calculate the remaining products in Table $4$. 

Alternatively, we can calculate the products directly from Table $3$, using our Theorem \ref{T5.1}. For example, $23^*=(b*a)*a=a\cdot ab=21$, and similarly $32^*=((a*b)*b)*(b*(b*a))=(ab\cdot b)(b\cdot ba)=32$. Hence, $32^**23^*=32*21=21\cdot 32$. From Table $3$, $21\cdot 32=33$. But from Theorem \ref{T5.1}, $33 = 33^*$. So, we obtain
$32^**23^*=33 =33^*$. The remaining products in Table $4$ can be calculated in similar fashion. Having already checked that Table $3$ is quadratical, Table $4$ also produces a quadratical quasigroup, the dual groupoid.

\medskip

\hspace*{-4mm}
{\small $
\begin{array}{|c|c|c|c|c|c|c|c|c|c|c|c|c|c|c|c|}\hline\rule{0mm}{3mm}
(Q3)^*&11^*&12^*&13^*&14^*&aba&21^*&22^*&23^*&24^*&31^*&32^*&33^*&34^*\\ \hline
11^*&	11^*&	21^*&	aba&12^*&	34^*&	22^*&	13^*&	32^*&	33^*&	23^*&	24^*&	14^*&	31^*\\ \hline
12^*&aba&12^*&	14^*&	22^*&	33^*&	34^*&	24^*&	31^*&	11^*&	13^*&	21^*&	32^*&	23^*\\ \hline
13^*&23^*&	11^*&	13^*&	aba&32^*&	14^*&	34^*&	21^*&	31^*&	22^*&	33^*&	24^*&	12^*\\ \hline
14^*&	13^*&	aba	&24^*&	14^*&	31^*&	32^*&	33^*&	12^*&23^*&34^*&	11^*&	21^*&	22^*\\ \hline
aba&32^*&	34^*&	31^*&	33^*&	aba&11^*&	12^*&	13^*&	14^*&	21^*&	22^*&	23^*&	24^*\\ \hline
21^*&	34^*&	13^*&	33^*&	24^*&	12^*&	21^*&	31^*&	aba&22^*&	32^*&	23^*&	11^*&	14^*\\ \hline
22^*&	31^*&	33^*&	23^*&	11^*&	14^*&	aba&22^*&	24^*&	32^*&	12^*&	34^*&	13^*&	21^*\\ \hline
23^*&	14^*&	22^*&	32^*&	34^*&	11^*&	33^*&	21^*&	23^*&	aba&23^*&	12^*&	31^*&	13^*\\ \hline
24^*&	21^*&	32^*&	12^*&	31^*&	13^*&	23^*&	aba&34^*&	24^*&	11^*&	14^*&	22^*&	33^*\\ \hline
31^*&	33^*&	24^*&	11^*&	21^*&	22^*&	12^*&	23^*&	14^*&	34^*&	31^*&	13^*&	aba&	32^*\\ \hline
32^*&	12^*&	31^*&	22^*&	23^*&	24^*&	13^*&	14^*&	33^*&	21^*&	aba&32^*&	34^*&	11^*\\ \hline
33^*&	22^*&	23^*&	34^*&	13^*&	21^*&	24^*&	32^*&	11^*&	12^*&	14^*&	31^*&	33^*&	aba\\ \hline
34^*&	24^*&	14^*&	21^*&	32^*&	23^*&	31^*&	11^*&	22^*&	13^*&	33^*&	aba&12^*&	34^*\\ \hline
\end{array}$}

\medskip
\centerline{\small Table 4.}

\medskip

Similarly, we can calculate the Cayley tables for $Q4$ and its dual $(Q4)^*$:

 \hspace*{-4mm}
{\small $
\begin{array}{|c|c|c|c|c|c|c|c|c|c|c|c|c|c|c|c|c|c|c|c|c|}\hline\rule{0mm}{3mm}
Q4&11&12&13&14&\!\!aba\!\!&21&22&23&24&31&32&33&34&41&42&43&44\\ \hline
11&	11&	21&	\!\!aba\!\!&	12&	44&	24&	32&	42&	43&	14&	23&	31&	41&	33&	34&	13&	22\\ \hline
12&	\!\!aba\!\!&12&	14&	22&	43&	44&	23&	41&	34&	32&	13&	42&	21&	11&	31&	24&	33\\ \hline
13&	23&	11&	13&\!\!aba\!\!&	42&	31&	44&	22&	41&	24&	43&	12&	33&	32&	21&	34&	14\\ \hline
14&	13&	\!\!aba\!\!&24&	14&	41&	42&	43&	33&	21&	44&	34&	22&	11&	23&	12&	31&	32\\ \hline
\!\!aba\!\!&42&	44&	41&	43&	\!\!aba\!\!&11&	12&	13&	14&	21&	22&	23&	24&	31&	32&	33&	34\\ \hline
21&	44&	32&	43&	23&	12&	21&	31&\!\!aba\!\!&	22&	34&	42&	11&	14&	24&	33&	41&	13\\ \hline
22&	41&	43&	21&	34&	14&	\!\!aba\!\!&22&	24&	32&	12&	33&	13&	44&	42&	23&	11&	31\\ \hline
23&	31&	24&	42&	44&	11&	33&	21&	23&	\!\!aba\!\!&41&	12&	32&	13&	34&	14&	22&	43\\ \hline
24&	22&	42&	33&	41&	13&	23&\!\!aba\!\!&	34&	24&	11&	14&	43&	31&	12&	44&	32&	21\\ \hline
31&	43&	23&	34&	13&	22&	12&	42&	14&	33&	31&	41&\!\!aba\!\!&	32&	44&	11&	21&	24\\ \hline
32&	33&	41&	11&	21&	24&	13&	14&	31&	44&\!\!aba\!\!&	32&	34&	42&	22&	43&	23&	12\\ \hline
33&24&	14&	44&	32&	21&	41&	34&	11&	12&	43&	31&	33&	\!\!aba\!\!&13&	22&	42&	23\\ \hline
34&	12&	31&	22&	42&	23&	32&	11&	43&	13&	33&	\!\!aba\!\!&44&	34&	21&	24&	14&	41\\ \hline
41&	21&	34&	12&	31&	32&	14&	33&	44&	23&	22&	11&	24&	43&	41&	13&	\!\!aba\!\!&	42\\ \hline
42&	14&	22&	32&	33&34&	43&	13&	21&	31&	23&	24&	41&	12&\!\!aba\!\!&	42&	44&	11\\ \hline
43&	32&	33&	23&	11&31&	34&	24&	12&	42&	13&	44&	21&	22&	14&	41&	43&	\!\!aba\!\!\\ \hline
44&	34&	13&	31&	24&	33&	22&	41&	32&	11&	42&	21&	14&	23&	43&\!\!aba\!\!&	12&	44\\ \hline
\end{array}$}

\medskip
\centerline{\small Table 5.}

\bigskip
  
\hspace*{-6mm}
{\small $
\begin{array}{|c|c|c|c|c|c|c|c|c|c|c|c|c|c|c|c|c|c|c|}\hline\rule{0mm}{3mm}
\!\!(Q4)^*\!\!\!&\!\!11^*\!\!\!&\!\!	12^*\!\!\!&\!\!	13^*\!\!\!&\!\!	14^*\!\!\!&\!\!	\!\!\!aba\!\!\!\!\!&\!\!	21^*\!\!\!&\!\!	22^*\!\!\!&\!\!	23^*\!\!\!&\!\!	24^*\!\!\!&\!\!	31^*\!\!\!&\!\!	32^*\!\!\!&\!\!	33^*\!\!\!&\!\!	34^*\!\!\!&\!\!	41^*\!\!\!&\!\!	42^*\!\!\!&\!\!	43^*\!\!\!&\!\!	44^*\!\!\\ \hline
11^*\!\!&\!	11^*\!\!&\!	21^*\!\!&\!	\!\!\!aba\!\!\!\!&\!	12^*\!\!&\!	41^*\!\!&\!	34^*\!\!&\!	24^*\!\!&\!	43^*\!\!&\!	42^*\!\!&\!	13^*\!\!&\!	33^*\!\!&\!	22^*\!\!&\!	44^*\!\!&\!	14^*\!\!&\!	23^*\!\!&\!	31^*\!\!&\!	32^*\!\!\\ \hline
12^*\!\!&\!	\!\!\!aba\!\!\!\!&\!	12^*\!\!&\!	14^*\!\!&\!	22^*\!\!&\!	42^*\!\!&\!	41^*\!\!&\!	33^*\!\!&\!	44^*\!\!&\!	23^*\!\!&\!	24^*\!\!&\!	11^*\!\!&\!	43^*\!\!&\!	31^*\!\!&\!	32^*\!\!&\!	13^*\!\!&\!	34^*\!\!&\!	21^*\!\!\\ \hline
13^*\!\!\!&\!\!	23^*\!\!\!&\!\!	11^*\!\!\!&\!\!	13^*\!\!\!&\!\!	\!\!\!aba\!\!\!\!\!&\!\!	43^*\!\!\!&\!\!	22^*\!\!\!&\!\!	41^*\!\!\!&\!\!	32^*\!\!\!&\!\!	44^*\!\!\!&\!\!	34^*\!\!\!&\!\!	42^*\!\!\!&\!\!	14^*\!\!\!&\!\!	21^*\!\!\!&\!\!	24^*\!\!\!&\!\!	31^*\!\!\!&\!\!	12^*\!\!\!&\!\!	33^*\!\!\\ \hline
14^*\!\!\!&\!\!	13^*\!\!\!&\!\!	\!\!\!aba\!\!\!\!\!&\!\!	24^*\!\!\!&\!\!	14^*\!\!\!&\!\!	44^*\!\!\!&\!\!	43^*\!\!\!&\!\!	42^*\!\!\!&\!\!	21^*\!\!\!&\!\!	31^*\!\!\!&\!\!	41^*\!\!\!&\!\!	23^*\!\!\!&\!\!	32^*\!\!\!&\!\!	12^*\!\!\!&\!\!	33^*\!\!\!&\!\!	34^*\!\!\!&\!\!	22^*\!\!\!&\!\!	11^*\!\!\\ \hline
\!\!\!aba\!\!\!\!\!&\!\!43^*\!\!\!&\!\!	41^*\!\!\!&\!\!	44^*\!\!\!&\!\!	42^*\!\!\!&\!\!	\!\!\!aba\!\!\!\!\!&\!\!	11^*\!\!\!&\!\!	12^*\!\!\!&\!\!	13^*\!\!\!&\!\!	14^*\!\!\!&\!\!	21^*\!\!\!&\!\!	22^*\!\!\!&\!\!	23^*\!\!\!&\!\!	24^*\!\!\!&\!\!	31^*\!\!\!&\!\!	32^*\!\!\!&\!\!	33^*\!\!\!&\!\!	34^*\!\!\\ \hline
21^*\!\!\!&\!\!	41^*\!\!\!&\!\!	24^*\!\!\!&\!\!	42^*\!\!\!&\!\!	33^*\!\!\!&\!\!	12^*\!\!\!&\!\!	21^*\!\!\!&\!\!	31^*\!\!\!&\!\!	\!\!\!aba\!\!\!\!\!&\!\!	22^*\!\!\!&\!\!	44^*\!\!\!&\!\!	34^*\!\!\!&\!\!	11^*\!\!\!&\!\!	14^*\!\!\!&\!\!	23^*\!\!\!&\!\!	43^*\!\!\!&\!\!	32^*\!\!\!&\!\!	13^*\!\!\\ \hline
22^*\!\!\!&\!\!	44^*\!\!\!&\!\!	42^*\!\!\!&\!\!	31^*\!\!\!&\!\!	23^*\!\!\!&\!\!	14^*\!\!\!&\!\!	\!\!\!aba\!\!\!\!\!&\!\!	22^*\!\!\!&\!\!	24^*\!\!\!&\!\!	32^*\!\!\!&\!\!	12^*\!\!\!&\!\!	43^*\!\!\!&\!\!	13^*\!\!\!&\!\!	33^*\!\!\!&\!\!	34^*\!\!\!&\!\!	21^*\!\!\!&\!\!	11^*\!\!\!&\!\!	41^*\!\!\\ \hline
23^*\!\!\!&\!\!	22^*\!\!\!&\!\!	34^*\!\!\!&\!\!	43^*\!\!\!&\!\!	41^*\!\!\!&\!\!	11^*\!\!\!&\!\!	33^*\!\!\!&\!\!	21^*\!\!\!&\!\!	23^*\!\!\!&\!\!	\!\!\!aba\!\!\!\!\!&\!\!	32^*\!\!\!&\!\!	12^*\!\!\!&\!\!	42^*\!\!\!&\!\!	13^*\!\!\!&\!\!	44^*\!\!\!&\!\!	14^*\!\!\!&\!\!	24^*\!\!\!&\!\!	31^*\!\!\\ \hline
24^*\!\!\!&\!\!	32^*\!\!\!&\!\!	43^*\!\!\!&\!\!	21^*\!\!\!&\!\!	44^*\!\!\!&\!\!	13^*\!\!\!&\!\!	23^*\!\!\!&\!\!	\!\!\!aba\!\!\!\!\!&\!\!	34^*\!\!\!&\!\!	24^*\!\!\!&\!\!	11^*\!\!\!&\!\!	14^*\!\!\!&\!\!	31^*\!\!\!&\!\!	41^*\!\!\!&\!\!	12^*\!\!\!&\!\!	33^*\!\!\!&\!\!	42^*\!\!\!&\!\!	22^*\!\!\\ \hline
31^*\!\!\!&\!\!	42^*\!\!\!&\!\!	33^*\!\!\!&\!\!	23^*\!\!\!&\!\!	11^*\!\!\!&\!\!	22^*\!\!\!&\!\!	12^*\!\!\!&\!\!	34^*\!\!\!&\!\!	14^*\!\!\!&\!\!	43^*\!\!\!&\!\!	31^*\!\!\!&\!\!	41^*\!\!\!&\!\!	\!\!\!aba\!\!\!\!\!&\!\!	32^*\!\!\!&\!\!	13^*\!\!\!&\!\!	44^*\!\!\!&\!\!	21^*\!\!\!&\!\!	24^*\!\!\\ \hline
32^*\!\!\!&\!\!	21^*\!\!\!&\!\!	44^*\!\!\!&\!\!	12^*\!\!\!&\!\!	31^*\!\!\!&\!\!	24^*\!\!\!&\!\!	13^*\!\!\!&\!\!	14^*\!\!\!&\!\!	41^*\!\!\!&\!\!	33^*\!\!\!&\!\!	\!\!\!aba\!\!\!\!\!&\!\!	32^*\!\!\!&\!\!	34^*\!\!\!&\!\!	42^*\!\!\!&\!\!	22^*\!\!\!&\!\!	11^*\!\!\!&\!\!	23^*\!\!\!&\!\!	43^*\!\!\\ \hline
33^*\!\!\!&\!\!	34^*\!\!\!&\!\!	13^*\!\!\!&\!\!	41^*\!\!\!&\!\!	24^*\!\!\!&\!\!	21^*\!\!\!&\!\!	32^*\!\!\!&\!\!	44^*\!\!\!&\!\!	11^*\!\!\!&\!\!	12^*\!\!\!&\!\!	43^*\!\!\!&\!\!	31^*\!\!\!&\!\!	33^*\!\!\!&\!\!	\!\!\!aba\!\!\!\!\!&\!\!	42^*\!\!\!&\!\!	22^*\!\!\!&\!\!	14^*\!\!\!&\!\!	23^*\!\!\\ \hline
34^*\!\!\!&\!\!	14^*\!\!\!&\!\!	22^*\!\!\!&\!\!	32^*\!\!\!&\!\!	43^*\!\!\!&\!\!	23^*\!\!\!&\!\!	42^*\!\!\!&\!\!	11^*\!\!\!&\!\!	31^*\!\!\!&\!\!	13^*\!\!\!&\!\!	33^*\!\!\!&\!\!	\!\!\!aba\!\!\!\!\!&\!\!44^*\!\!\!&\!\!	34^*\!\!\!&\!\!	21^*\!\!\!&\!\!	24^*\!\!\!&\!\!	41^*\!\!\!&\!\!	12^*\!\!\\ \hline
41^*\!\!\!&\!\!	31^*\!\!\!&\!\!	23^*\!\!\!&\!\!	34^*\!\!\!&\!\!	13^*\!\!\!&\!\!	32^*\!\!\!&\!\!	14^*\!\!\!&\!\!	43^*\!\!\!&\!\!	33^*\!\!\!&\!\!	21^*\!\!\!&\!\!	22^*\!\!\!&\!\!	44^*\!\!\!&\!\!	24^*\!\!\!&\!\!	11^*\!\!\!&\!\!	41^*\!\!\!&\!\!	12^*\!\!\!&\!\!	\!\!\!aba\!\!\!\!\!&\!\!	42^*\!\!\\ \hline
42^*\!\!\!&\!\!	33^*\!\!\!&\!\!	32^*\!\!\!&\!\!	11^*\!\!\!&\!\!	21^*\!\!\!&\!\!	34^*\!\!\!&\!\!	31^*\!\!\!&\!\!	13^*\!\!\!&\!\!	22^*\!\!\!&\!\!	41^*\!\!\!&\!\!	23^*\!\!\!&\!\!	24^*\!\!\!&\!\!	12^*\!\!\!&\!\!	43^*\!\!\!&\!\!	\!\!\!aba\!\!\!\!\!&\!\!	42^*\!\!\!&\!\!	44^*\!\!\!&\!\!	14^*\!\!\\ \hline
43^*\!\!\!&\!\!	24^*\!\!\!&\!\!	14^*\!\!\!&\!\!	33^*\!\!\!&\!\!	32^*\!\!\!&\!\!	31^*\!\!\!&\!\!	44^*\!\!\!&\!\!	23^*\!\!\!&\!\!	12^*\!\!\!&\!\!	34^*\!\!\!&\!\!	42^*\!\!\!&\!\!	13^*\!\!\!&\!\!	21^*\!\!\!&\!\!	22^*\!\!\!&\!\!	11^*\!\!\!&\!\!	41^*\!\!\!&\!\!	43^*\!\!\!&\!\!	\!\!\!aba\!\!\!\!\!\\ \hline
44^*\!\!\!&\!\!	12^*\!\!\!&\!\!	31^*\!\!\!&\!\!	22^*\!\!\!&\!\!	34^*\!\!\!&\!\!	33^*\!\!\!&\!\!	24^*\!\!\!&\!\!	32^*\!\!\!&\!\!	42^*\!\!\!&\!\!	11^*\!\!\!&\!\!	14^*\!\!\!&\!\!	21^*\!\!\!&\!\!	41^*\!\!\!&\!\!	23^*\!\!\!&\!\!	43^*\!\!\!&\!\!	\!\!\!aba\!\!\!\!\!&\!\!	13^*\!\!\!&\!\!	44^*\!\!\\ \hline
\end{array}$}

\medskip
\centerline{\small Table 6.}

\medskip
Groups of orders $13$ and $17$ are isomorphic to the additive groups $\mathbb{Z}_{13}$ and $\mathbb{Z}_{17}$, respectively. So, by Theorem \ref{T2.4}, quasigroups $Q3$ and $Q4$ are isomorphic to quadratical quasigroups induced by $\mathbb{Z}_{13}$ and $\mathbb{Z}_{17}$, respectively. Direct computations show that $Q3$ is isomorphic to the quadratical quasigroup $(\mathbb{Z}_{13},\cdot)$ with the operation $x\cdot y=11x+3y({\rm mod}\,13)$; the dual quasigroup $(Q3)^*$ is isomorphic to the quasigroup $(\mathbb{Z}_{13},\circ)$ with the operation $x\circ y= 3x+11y({\rm mod}\,13)$.
Similarly, $Q4$ is isomorphic to $(\mathbb{Z}_{17},\cdot)$ with the operation $x\cdot y=11x+7y({\rm mod}\,17)$. Its dual quasigroup $(Q4)^*$ is isomorphic to the quasigroup $(\mathbb{Z}_{17},\circ)$ with the operation $x\circ y= 7x+11y({\rm mod}\,17)$.

\section*{\centerline{7. No quadratical quasigroup of form Q6 exists}}\setcounter{section}{7}
\setcounter{theorem}{0}

\begin{theorem}\label{T7.1}  There is no quadratical quasigroup of form $Q6$.
\end{theorem}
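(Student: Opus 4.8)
The plan is to assume, for contradiction, that a quadratical quasigroup $Q$ of form $Q6$ exists and to exploit the cyclic level structure forced by Propositions \ref{P3.6} and \ref{P3.7} together with Theorem \ref{T5.2}. Since $Q=\{aba\}\cup\bigcup_{t=1}^{6}Ht$ with the six sets $H1,\dots,H6$ pairwise disjoint $4$-cycles based on $aba$ (Propositions \ref{P3.8}--\ref{P3.11}), every product in $Q$ is determined once the single value $aba\cdot a\in H6=\{61,62,63,64\}$ is fixed, giving four cases. Because the dual of a $Q6$ is again a $Q6$ and Theorem \ref{T5.1} (with $6\equiv 2\ (\mathrm{mod}\ 4)$) identifies which choices of $aba\cdot a$ yield dual tables, I would first pair the cases off under duality so that only two of the four values need be treated explicitly.

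First I would record the behaviour of the map $x\mapsto aba\cdot x$. By Proposition \ref{P3.6} it sends $Ht$ to $H(t-1)$ without disturbing the second index for $t>1$, while on $H1$ Theorem \ref{T5.2} shows it sends $1k$ into $H6$ by a fixed permutation $\sigma$ of the indices $\{1,2,3,4\}$ depending on the case. Reading the level index modulo $6$, this map acts as a single $6$-cycle, so it decrements levels cyclically. I would then take a generating identity (Definition \ref{D5.3}) — for instance one of the identities in the last column of Theorem \ref{T5.2}, which links a level-$6$ product to a level-$5$ element — and propagate it by multiplying repeatedly by $aba$, using Propositions \ref{P3.6}, \ref{P3.7} and Theorem \ref{T5.2} to rewrite each step. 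Since the level cycle has length $6$, six such multiplications return the identity to its original levels but with the second indices twisted by the composite permutation accumulated during the single wrap through $H1$.

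The contradiction is then produced by comparing the identity obtained after a full turn with the relations already known to hold in $Q$: the twist forces two elements lying in distinct sets among $H1,\dots,H6$ (or two distinct elements of one $Ht$) to coincide, which is impossible because these $4$-cycles are pairwise disjoint and each $Ht$ has four distinct members (Propositions \ref{P3.8}, \ref{P3.11}). Carrying this out in each of the two surviving cases completes the proof. I expect the main obstacle to be purely combinatorial bookkeeping: one must choose the generating identity so that the accumulated index-twist after exactly six steps lands on a genuine collision, and one must check that the analogous computation does \emph{not} collapse for $n\le 5$ (where quadratical quasigroups of form $Qn$ do exist), so that the argument really is particular to the period $n=6$.
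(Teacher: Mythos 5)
Your setup (the four cases according to the value of $aba\cdot a\in H6$, the table of identities obtained by repeatedly multiplying a generating identity by $aba$ on the left and right) matches the paper's. But the mechanism you propose for extracting the contradiction does not work, and it is not what the paper does. In the case $aba\cdot a=61$ the permutation of second indices induced by left multiplication by $aba$ as one passes through $H1$ is the \emph{identity} permutation (Theorem \ref{T5.2}, row $n1$: $aba\cdot 11=n1$, $aba\cdot 12=n2$, $aba\cdot 13=n3$, $aba\cdot 14=n4$), and Proposition \ref{P3.6} introduces no twist at the other levels; so six left-multiplications return every identity to itself verbatim and produce nothing. Right multiplication does twist the indices by the $4$-cycle $\tau=(1\,2\,4\,3)$ of Proposition \ref{P3.7}, but the resulting family of identities is internally consistent --- it is exactly how the paper's Table 7 is generated, and one can check (e.g.\ $62=61\cdot 64$ propagates to $63=64\cdot 61$, which already holds by Proposition \ref{P3.5}) that no two of the derived identities force distinct elements of the disjoint cycles $H1,\dots,H6$ to coincide. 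So "the twist lands on a genuine collision" is precisely the step that fails; your own closing worry about why the argument should be particular to $n=6$ is pointing at this hole.

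The paper's actual contradiction is of a different kind: it is a failure of \emph{solvability}, not a collision of elements. Having accumulated the full table of derived identities (Table 7, columns $(15)$--$(25)$), the authors fix the single equation $52=12\cdot x$ and show that every one of the candidate values of $x$ (first cut down to a $13$-element set using Propositions \ref{P3.5}, \ref{P3.10} and the table, then eliminated one by one using the derived identities together with cancellativity) leads to a contradiction. Hence $12\cdot x=52$ has no solution, contradicting the quasigroup property. This element-by-element elimination is the essential content of the proof and is absent from your proposal; without it, or some substitute argument of comparable strength, the proof does not go through. (Your idea of pairing the four cases under duality via Theorem \ref{T5.1} is reasonable and could in principle halve the remaining work, though the paper simply asserts that Cases 2--4 are handled similarly.)
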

\begin{proof}  {\sc Case 1}: $aba\cdot a=61$. Using Propositions \ref{P3.6}, \ref{P3.7}, Theorem \ref{T5.2} and Theorem \ref{T2.1}, we see that $aba\cdot a=61$, by \eqref{e10}, implies
\begin{equation}\label{e12} 
a\cdot 61=61\cdot aba=52=62\cdot 11\stackrel{\eqref{e10}}{=}aba\cdot 62=52\cdot 52. 
\end{equation}
Then,  $62=61\cdot 64$, by Proposition \ref{P3.5}. This, by Proposition \ref{P3.6}, gives $52=51\cdot 54$. Also, $62=52\cdot 54$, by Definition \ref{D3.1}, whence $52=42\cdot 44$, by Proposition \ref{P3.6} and \eqref{e10}, and so
\begin{equation}\label{e13} 
52=51\cdot 54=42\cdot 44. 
\end{equation}
Theorem \ref{T5.2} implies $61=14\cdot 21=34\cdot14$, \  $62=23\cdot 14$ and $63=11\cdot 34$. So, these identities generate the following:
\begin{equation}\label{e14} 
52=63\cdot 12=13\cdot 64=64\cdot 21=23\cdot 63. 
\end{equation}

As a consequence of \eqref{e12}, \eqref{e13}, \eqref{e14}, Proposition \ref{P3.5} and Proposition \ref{P3.10} we can see that the solutions to the equation $52=12\cdot x$ must be in the set $\{14,22,23,24,31,32,33,34,41,42,43,51,53\}.$ Now, by Definition \ref{D3.1}, we obtain  $22=12\cdot 14\ne 52$ and so $x\ne 14$.
		
To eliminate the other possibilities for $x$ we now use the generating identities $(15)$ through $(25)$, indicated in the Table $7$ below. 

\bigskip
	\hspace*{-7mm}
{\small $
\begin{array}{|c|c|c|c|c|c|c|c|c|c|c|c|c|c|c|c|c|c|c|}\hline\rule{0mm}{3mm}
&\!(15)&\!	(16)&\!	(17)&\!	(18)&\!	(19)	&\!(20)&\!	(21)&\!	(22)&\!	(23)&\!	(24)&\!	(25)\!\\ \hline
&\!\!(n\!\!-\!\!1)2\!\!&\!\!(n\!\!-\!\!1)2\!\!&\!aba\!\cdot\!\!  11&\!11\!\!\cdot\!  aba&\!\!Prop.&\!Def.&\!n1=&\!n2=&\!n3=&\!n1=&\!\!idem.\!\\
&\!\!=\!\!11\!\!\cdot\!  n1\!\!&\!\!=\!n2\!\cdot\!\!  11\!\!&\!=61&\!=62&\!\ref{P3.5}&\!\ref{D3.1}&\!14\!\cdot\!  21&\!23\!\cdot\!  14&\!11\!\cdot\!  34&\!34\!\cdot\!  14&\!\!\\ \hline											
\!\!52\!\!&\!11\!\cdot\!  61&\!62\!\cdot\!  11&\!
aba\!\cdot\!   62\!&\!
61\!\cdot\!   aba\!&\!
51\!\cdot\!   54\!&\!
42\!\cdot\!   44\!&\!
63\!\cdot\!   12\!&\!
13\!\cdot\!   64\!&\!
64\!\cdot\!   21\!&\!
23\!\cdot\!   13\!&\!
52\!\cdot\!   52\!\!\\ \hline

\!\!44\!&\!	62\!\cdot\!   52&\!
54\!\cdot\!   62&\!
aba\!\cdot\!   54\!&\!
52\!\cdot\!   aba\!&\!
42\!\cdot\!   43\!&\!
34\!\cdot\!  33\!&\!
51\!\cdot\!   64\!&\!
61\!\cdot\!  22\!&\!
53\!\cdot\!   12\!&\!
11\!\cdot\!  61\!&\!
44\!\cdot\!  44\!\!\\ \hline

\!\!33\!\!&\!	54\!\cdot\!  44\!&\!
43\!\cdot\!  54\!&\!
aba\!\cdot\!  43\!&\!
44\!\cdot\!  aba\!&\!
34\!\cdot\! 31\!&\!
23\!\cdot\!  21\!&\!
42\!\cdot\! 53\!&\!
52\!\cdot\!  41\!&\!
41\!\cdot\!  64\!&\!
62\!\cdot\!  52\!&\!
33\!\cdot\!  33\!\!\\ \hline

\!\!21\!\!&\!	43\!\cdot\!  33\!&\!
31\!\cdot\!  43\!&\!
aba\!\cdot\!  31\!&\!
33\!\cdot\!  aba\!&\!
23\!\cdot\!  22\!&\!
11\!\cdot\!  12\!&\!
34\!\cdot\!  41\!&\!
44\!\cdot\!  32\!&\!
32\!\cdot\!  53\!&\!
54\!\cdot\!  34\!&\!
21\!\cdot\!  21\!\!\\ \hline

\!\!12\!\!&\!	31\!\cdot\!  21\!&\!
22\!\cdot\!  31\!&\!
aba\!\cdot\!  22\!&\!
21\!\cdot\!  aba\!&\!
11\!\cdot\!  14\!&\!
62\!\cdot\!  64\!&\!
23\!\cdot\!  32\!&\!
33\!\cdot\!  24\!&\!
24\!\cdot\!  41\!&\!
43\!\cdot\!  33\!&\!
12\!\cdot\!  12\!\!\\ \hline
  
\!\!64\!\!&\!	22\!\cdot\!  12\!&\!
14\!\cdot\!  22\!&\!
aba\!\cdot\!  14\!&\!
12\!\cdot\!  aba\!&\!
62\!\cdot\!  63\!&\!
54\!\cdot\!  53\!&\!
11\!\cdot\!  24\!&\!
21\!\cdot\!  13\!&\!
13\!\cdot\!  32\!&\!
31\!\cdot\!  21\!&\!
64\!\cdot\!  64\!\!\\ \hline

\!\!53\!\!&\!	14\!\cdot\!  64\!&\!
63\!\cdot\!  14\!&\!
aba\!\cdot\!  63\!&\!
64\!\cdot\!  aba\!&\!
54\!\cdot\!  51\!&\!
43\!\cdot\!  41\!&\!
62\!\cdot\!  13\!&\!
12\!\cdot\!  61\!&\!
61\!\cdot\!  24\!&\!
22\!\cdot\!  12\!&\!
53\!\cdot\!  53\!\!\\ \hline

\!\!41\!\!&\!	63\!\cdot\!  53\!&\!
51\!\cdot\!  63\!&\!
aba\!\cdot\!  51\!&\!
53\!\cdot\!  aba\!&\!
43\!\cdot\!  42\!&\!
31\!\cdot\!  32\!&\!
54\!\cdot\!  61\!&\!
64\!\cdot\!  52\!&\!
52\!\cdot\!  13\!&\!
14\!\cdot\!  54\!&\!
41\!\cdot\!  41\!\!\\ \hline

\!\!32\!\!&\!	51\!\cdot\!  41\!&\!
42\!\cdot\!  51\!&\!
aba\!\cdot\!  42\!&\!
41\!\cdot\!  aba\!&\!
31\!\cdot\!  34\!&\!
22\!\cdot\!  24\!&\!
43\!\cdot\!  52\!&\!
53\!\cdot\!  44\!&\!
44\!\cdot\!  61\!&\!
63\!\cdot\!  53\!&\!
32\!\cdot\!  32\!\!\\ \hline

\!\!24\!\!&\!	42\!\cdot\!  32\!&\!
34\!\cdot\!  42\!&\!
aba\!\cdot\!  34\!&\!
32\!\cdot\!  aba\!&\!
22\!\cdot\!  23\!&\!
14\!\cdot\!  13\!&\!
31\!\cdot\!  44\!&\!
41\!\cdot\!  33\!&\!
33\!\cdot\!  52\!&\!
51\!\cdot\!  41\!&\!
24\!\cdot\!  24\!\!\\ \hline

\!\!13\!\!&\!	34\!\cdot\!  24\!&\!
23\!\cdot\!  34\!&\!
aba\!\cdot\!  23\!&\!
24\!\cdot\!  aba\!&\!
14\!\cdot\!  11\!&\!
63\!\cdot\!  61\!&\!
22\!\cdot\!  33\!&\!
32\!\cdot\!  21\!&\!
21\!\cdot\!  44\!&\!
42\!\cdot\!  32\!&\!
13\!\cdot\!  13\!\!\\ \hline

\!\!61\!\!&\!	23\!\cdot\!  13\!&\!
11\!\cdot\!  23\!&\!
aba\!\cdot\!  11\!&\!
13\!\cdot\!  aba\!&\!
63\!\cdot\!  62\!&\!
51\!\cdot\!  52\!&\!
14\!\cdot\!  21\!&\!
24\!\cdot\!  12\!&\!
12\!\cdot\!  33\!&\!
34\!\cdot\!  14\!&\!
61\!\cdot\!  61\!\!\\ \hline

\!\!51\!\!&\!	13\!\cdot\!  63\!&\!
61\!\cdot\!  13\!&\!
aba\!\cdot\!  61\!&\!
63\!\cdot\!  aba\!&\!
53\!\cdot\!  52\!&\!
41\!\cdot\!  42\!&\!
64\!\cdot\!  11\!&\!
14\!\cdot\!  62\!&\!
62\!\cdot\!  23\!&\!
24\!\cdot\!  64\!&\!
51\!\cdot\!  51\!\!\\ \hline

\!\!31\!\!&\!	53\!\cdot\!  43\!&\!
41\!\cdot\!  53\!&\!
aba\!\cdot\!  41\!&\!
43\!\cdot\!  aba\!&\!
33\!\cdot\!  32\!&\!
21\!\cdot\!  22\!&\!
44\!\cdot\!  51\!&\!
54\!\cdot\!  42\!&\!
42\!\cdot\!  63\!&\!
64\!\cdot\!  44\!&\!
31\!\cdot\!  31\!\!\\ \hline

\!\!11\!\!&\!	33\!\cdot\!  23\!&\!
21\!\cdot\!  33\!&\!
aba\!\cdot\!  21\!&\!
23\!\cdot\!  aba\!&\!
13\!\cdot\!  12\!&\!
61\!\cdot\!  62\!&\!
24\!\cdot\!  31\!&\!
34\!\cdot\!  22\!&\!
22\!\cdot\!  43\!&\!
44\!\cdot\!  24\!&\!
11\!\cdot\!  11\!\!\\ \hline

\!\!62\!\!&\!	21\!\cdot\!  11\!&\!
12\!\cdot\!  21\!&\!
aba\!\cdot\!  12\!&\!
11\!\cdot\!  aba\!&\!
61\!\cdot\!  64\!&\!
52\!\cdot\!  54\!&\!
13\!\cdot\!  22\!&\!
23\!\cdot\!  14\!&\!
14\!\cdot\!  31\!&\!
33\!\cdot\!  23\!&\!
62\!\cdot\!  62\!\!\\ \hline

\!\!54\!\!&\!	12\!\cdot\!  62\!&\!
64\!\cdot\!  12\!&\!
aba\!\cdot\!  64\!&\!
62\!\cdot\!  aba\!&\!
52\!\cdot\!  53\!&\!
44\!\cdot\!  43\!&\!
61\!\cdot\!  14\!&\!
11\!\cdot\!  63\!&\!
63\!\cdot\!  22\!&\!
21\!\cdot\!  11\!&\!
54\!\cdot\!  54\!\!\\ \hline

\!\!43\!\!&\!	64\!\cdot\!  54\!&\!
53\!\cdot\!  64\!&\!
aba\!\cdot\!  53\!&\!
54\!\cdot\!  aba\!&\!
44\!\cdot\!  41\!&\!
33\!\cdot\!  31\!&\!
52\!\cdot\!  63\!&\!
62\!\cdot\!  51\!&\!
51\!\cdot\!  14\!&\!
12\!\cdot\!  62\!&\!
43\!\cdot\!  43\!\!\\ \hline

\!\!34\!\!&\!	52\!\cdot\!  42\!&\!
44\!\cdot\!  52\!&\!
aba\!\cdot\!  44\!&\!
42\!\cdot\!  aba\!&\!
32\!\cdot\!  33\!&\!
24\!\cdot\!  23\!&\!
41\!\cdot\!  54\!&\!
51\!\cdot\!  43\!&\!
43\!\cdot\!  62\!&\!
61\!\cdot\!  51\!&\!
34\!\cdot\!  34\!\!\\ \hline

\!\!23\!\!&\!	44\!\cdot\!  34\!&\!
33\!\cdot\!  44\!&\!
aba\!\cdot\!  33\!&\!
34\!\cdot\!  aba\!&\!
24\!\cdot\!  21\!&\!
13\!\cdot\!  11\!&\!
32\!\cdot\!  43\!&\!
42\!\cdot\!  31\!&\!
31\!\cdot\!  54\!&\!
52\!\cdot\!  42\!&\!
23\!\cdot\!  23\!\!\\ \hline

\!\!14\!\!&\!	32\!\cdot\!  22\!&\!
24\!\cdot\!  32\!&\!
aba\!\cdot\!  24\!&\!
22\!\cdot\!  aba\!&\!
12\!\cdot\!  13\!&\!
64\!\cdot\!  63\!&\!
21\!\cdot\!  34\!&\!
31\!\cdot\!  23\!&\!
23\!\cdot\!  42\!&\!
41\!\cdot\!  31\!&\!
14\!\cdot\!  14\!\!\\ \hline

\!\!63\!\!&\!	24\!\cdot\!  14\!&\!
13\!\cdot\!   24\!&\!
aba\!\cdot\!  13\!&\!
14\!\cdot\!   aba\!&\!
64\!\cdot\!   61\!&\!
53\!\cdot\!   51\!&\!
12\!\cdot\!   23\!&\!
22\!\cdot\!   11\!&\!
11\!\cdot\!   34\!&\!
32\!\cdot\!   22\!&\!
63\!\cdot\!  63\!\!\\ \hline

\!\!42\!\!&\!	61\!\cdot\!   51\!&\!
52\!\cdot\!   61\!&\!
aba\!\cdot\!   52\!&\!
51\!\cdot\!   aba\!&\!
41\!\cdot\!   44\!&\!
32\!\cdot\!   34\!&\!
53\!\cdot\!   62\!&\!
63\!\cdot\!   54\!&\!
54\!\cdot\!    11\!&\!
13\!\cdot\!  63\!&\!
42\!\cdot\!  42\!\!\\ \hline

\!\!22\!\!&\!	41\!\cdot\!   31\!&\!
32\!\cdot\!  41\!&\!
aba\!\cdot\!  32\!&\!
31\!\cdot\!  aba\!&\!
21\!\cdot\!  24\!&\!
12\!\cdot\!  14\!&\!
33\!\cdot\!  42\!&\!
43\!\cdot\!  34\!&\!
34\!\cdot\!  51\!&\!
53\!\cdot\!  53\!&\!
22\!\cdot\!  22
  \!\!\\ \hline
\end{array}$}

\medskip
\centerline{\small Table 7.}

\medskip

Assuming that $Q6$ is quadrati\-cal, using the properties of a quadratical quasigroup we will prove that all the remaining possible values of $x$ lead to a contradiction.

When we use a particular value of an element we will refer to the column in which this value appears in Table 7. For example, we will use the fact that $52=63\cdot 12$, from $(21)$, henceforth without mention

By $(21)$, if $52=12\cdot 53=63\cdot 12$, then $12=53\cdot 63$, and, multiplying on the right by $aba$ gives $64=41\cdot 51$,  which, along with $51\cdot 41=24$, (from $(24)$) gives $51=64\cdot 24$. This contradicts $51=64\cdot 11$, from $(21)$.

If $52=12\cdot 51=63\cdot 12$ then $12=51\cdot 63=62\cdot 64$, from $(20)$. Hence, by $(19)$ and $(20)$, $61=63\cdot 62=64\cdot 51=51\cdot 52$. Therefore, using $(24)$, $51=52\cdot 64=24\cdot 64$, a contradiction. 

If $52=12\cdot 43=63\cdot 12$  then, by $(23)$, $12=43\cdot 63=24\cdot 41$. By Proposition \ref{P3.11} we have $63\cdot 24=41\cdot 43=aba=23\cdot 24$, contradiction. 

 If $52=12\cdot 42=63\cdot 12$ then, by $(23)$, is $12=42\cdot 63=24\cdot 41$. By Proposition \ref{P3.11} and $(24)$, $51=41\cdot 42=63\cdot 24=24\cdot 64$. So, by $(20)$, $24=64\cdot 63=14$, contradiction.  

If $52=12\cdot 41=63\cdot 12$  then, by $(23)$, $12=41\cdot 63=24\cdot 41$ and so, using $(15)$, $41=63\cdot 24=63\cdot 53$, contradiction. 

If $52=12\cdot 34=63\cdot 12$  then, by $(21)$, $12=34\cdot 63=23\cdot 32$ and, by Proposition \ref{P3.11} and $(22)$, $42=32\cdot 34=63\cdot 23=63\cdot 54$, contradiction. 

If $52=12\cdot 33=63\cdot 12$  then, by $(21)$, $12=33\cdot 63=23\cdot 32$ and so, by Propositions \ref{P3.11} and \ref{P3.5}, $34=32\cdot 33=63\cdot 23=24\cdot 23$, contradiction.  

If $52=12\cdot 32=63\cdot 12$ then, by $(21)$, $12=32\cdot 63=23\cdot 32$  and so, by $(24)$,  $32=63\cdot 33=63\cdot 53$, contradiction.

If $52=12\cdot 31=63\cdot 12$ then, by $(15)$, $12=31\cdot 63=31\cdot 21$, contradiction.

If $52=12\cdot 24=63\cdot 12$ then, by $(15)$, $12=24\cdot 63=24\cdot 41$, contradiction.

If $52=12\cdot 23=63\cdot 12$ then, by $(21)$, $12=23\cdot 63=23\cdot 32$, contradiction.

If $52=12\cdot 22=63\cdot 12$ then, by $(26)$, $12=22\cdot 63=22\cdot 31$, contradiction.

If $52=12\cdot 14=63\cdot 12$  then, by Proposition \ref{P3.11},  $52=12\cdot 14=22$, contradiction.

\smallskip 

In this way we have proved that when $aba\cdot a=61$, there is no right solvability, a contradiction.

 The proof that there is no right solvability in Case $2$ ($aba\cdot a=62$), Case 3 ($aba\cdot a=63$) and Case 4 ($aba\cdot a=64$) are similar, where the values in Table $7$ are different, according to Theorem \ref{T5.2}. We omit these detailed calculations. 
\end{proof} 

There are $32$ quadratical quasigroups of order $25$ (cf. \cite{DM}). Some of them are isomorphic to quasigroups $Q1\!\times\! Q1$, $Q1\!\times\! (Q1)^*,$ $(Q1)^*\!\times\! Q1$, $(Q1)^*\!\times\! (Q1)^*$.
\begin{theorem}\label{T7.2}
Quadratical quasigroups induced by $\mathbb{Z}_{25}$ are not isomorphic to $Q1\!\times\!Q1$, $Q1\!\times\!(Q1)^*,$ $(Q1)^*\!\times\! Q1$, $(Q1)^*\!\times\!(Q1)^*$.
\end{theorem}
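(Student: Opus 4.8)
The plan is to separate the two families by their lattices of order-$5$ subquasigroups. First I would pin down the $\mathbb{Z}_{25}$-induced quasigroups explicitly. By Theorem \ref{T2.4} such a quasigroup has the form $x\cdot y=ax+(1-a)y$ with $2a^2-2a+1\equiv 0\pmod{25}$, and solving this congruence gives exactly the two roots $a=4$ and $a=22$ (they sum to $1$, so the two quasigroups are mutually dual). The key observation is that $\{a,1-a\}=\{4,-3\}$ in both cases, and that $-3\equiv 22$ is a \emph{primitive root} modulo $25$: its order in $(\mathbb{Z}_{25})^{\times}$ is $20$, since $(-3)^{2}\equiv 9$, $(-3)^{4}\equiv 6$ and $(-3)^{10}\equiv -1$.

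Next I would classify the order-$5$ subquasigroups of $(\mathbb{Z}_{25},\cdot)$. By idempotency \eqref{e1} every singleton is a subquasigroup, and subquasigroups translate: if $S$ is a subquasigroup and $s_0\in S$ then $S-s_0$ is again one, because $a+(1-a)=1$ gives $a(s_1-s_0)+(1-a)(s_2-s_0)=(s_1\cdot s_2)-s_0$. Hence it suffices to analyse a subquasigroup $T$ with $0\in T$. Setting $y=0$ and then $x=0$ in $x\cdot y=ax+(1-a)y$ shows $T$ is closed under multiplication by $a$ and by $1-a$, hence by $-3$, hence by every power of $-3$; since $-3$ is a primitive root, $T$ is closed under multiplication by every unit of $\mathbb{Z}_{25}$. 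Therefore, if $T$ contains any unit it contains all $20$ units, and then a suitable combination of two units (for $a=4$, e.g. $4\cdot 1-3\cdot 3=-5$) forces the nonzero multiples of $5$ into $T$ as well, so $T=\mathbb{Z}_{25}$; if instead every nonzero element of $T$ is a multiple of $5$ then $T\subseteq\{0,5,10,15,20\}=5\mathbb{Z}_{25}$. In particular the only order-$5$ subquasigroup containing $0$ is $5\mathbb{Z}_{25}$, so every order-$5$ subquasigroup is a coset of $5\mathbb{Z}_{25}$; as $\mathbb{Z}_{25}$ has a unique subgroup of order $5$, these cosets partition $\mathbb{Z}_{25}$ and are pairwise disjoint. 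This yields the invariant I want: \emph{in a $\mathbb{Z}_{25}$-induced quadratical quasigroup any two distinct order-$5$ subquasigroups are disjoint}; equivalently, none of them meet in exactly one point.

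I would then show each product violates this invariant. Since $Q1$ and $(Q1)^*$ are idempotent of order $5$, for any element $c$ the slice $Q1\times\{c\}$ (resp.\ $(Q1)^*\times\{c\}$) is a subquasigroup isomorphic to the first factor, and $\{d\}\times Q1$ (resp.\ $\{d\}\times(Q1)^*$) one isomorphic to the second factor; both have order $5$ and meet in the single point $(d,c)$. Thus each of $Q1\times Q1$, $Q1\times(Q1)^*$, $(Q1)^*\times Q1$ and $(Q1)^*\times(Q1)^*$ contains two order-$5$ subquasigroups intersecting in exactly one element. Because any isomorphism carries subquasigroups to subquasigroups of the same order and preserves intersections, none of these four products can be isomorphic to a quasigroup induced by $\mathbb{Z}_{25}$.

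The routine parts are the slice construction (immediate from idempotency) and the closing invariance remark. The one step needing real care, and the main obstacle, is the classification in the second paragraph: showing that the order-$5$ subquasigroups of $(\mathbb{Z}_{25},\cdot)$ are exactly the cosets of $5\mathbb{Z}_{25}$. The clean route through it is precisely the number-theoretic fact that the coefficient $-3$ is a primitive root modulo $25$, which collapses ``closed under multiplication by $-3$'' to ``closed under multiplication by every unit'' and leaves only the three candidates $\{0\}$, $5\mathbb{Z}_{25}$ and $\mathbb{Z}_{25}$.
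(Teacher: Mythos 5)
Your proof is correct, but it takes a genuinely different route from the paper's. The paper argues via a two-variable identity: since $Q1\cong(\mathbb{Z}_5,\cdot)$ with $x\cdot y=4x+2y$ satisfies $x\cdot xy=yx$, this identity holds for all pairs $(x,a),(y,a)$ with equal second coordinate in $Q1\times Q1$ and $Q1\times(Q1)^*$ (the second coordinate being fixed by idempotency), whereas in $(\mathbb{Z}_{25},\cdot)$ with $x\cdot y=22x+4y$ the relation $10\bar x+16\bar y=22\bar y+4\bar x$ forces $\bar x=\bar y$; the dual identity $yx\cdot x=xy$ handles the remaining two products. You instead use a structural invariant: the order-$5$ subquasigroups. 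Your classification is sound --- the congruence $2a^2-2a+1\equiv 0\pmod{25}$ indeed has exactly the roots $4$ and $22$, the translate $S-s_0$ of a subquasigroup is a subquasigroup because $a+(1-a)=1$, a subquasigroup through $0$ is closed under ring-multiplication by $a$ and $1-a$, and since $-3$ has order $20$ in $(\mathbb{Z}_{25})^\times$ any such subquasigroup containing a unit contains all $20$ units and hence cannot have order $5$; so the order-$5$ subquasigroups are exactly the cosets of $5\mathbb{Z}_{25}$ and are pairwise disjoint, while each product visibly contains two order-$5$ slices meeting in a single point. The paper's argument is shorter and purely computational; yours costs a little number theory (that $-3$ is a primitive root mod $25$) but buys a complete description of the subquasigroup lattice of the $\mathbb{Z}_{25}$-induced quasigroups (singletons, the five cosets of $5\mathbb{Z}_{25}$, and the whole quasigroup), which also explains the paper's later observation that these quasigroups are $2$-generated but not by every pair of distinct elements.
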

\begin{proof}
There are only two quadratical quasigroups induced by $\mathbb{Z}_{25}$ (cf. \cite{DM}). Their ope\-rations are given by $x\cdot y=22x+4y({\rm mod}\,25)$ and $x\circ y=4x+22y({\rm mod}\,25)$. Quasigroups $Q1$ and $(Q1)^*$ are isomorphic, respectively, to quasigroups $(\mathbb{Z}_5,\cdot)$ and $(\mathbb{Z}_5,\circ)$, where $x\cdot y=4x+2y({\rm mod}\,5)$ and $x\circ y=2x+4y({\rm mod}\,5)$. 

Suppose that $(\mathbb{Z}_{25},\cdot)$ is isomorphic to $Q1\!\times\! Q1$ or to $Q1\!\times\!(Q1)^*$. Since in $(\mathbb{Z}_5,\cdot)$ we have $x\cdot xy=yx$, in $Q1\!\times\! Q1$ and $Q1\!\times\!(Q1)^*$ for all $\overline{x}=(x,a)\ne\overline{y}=(y,a)$, $\bar{x}\cdot\bar{x}\bar{y}=\bar{y}\bar{x}$. But in $(\mathbb{Z}_{25},\cdot)$ we have $22\bar{y}+4\bar{x}=\bar{y}\bar{x}=\bar{x}\cdot\bar{x}\bar{y}= 10\bar{x}+16\bar{y}$, which implies $\bar{x}=\bar{y}$. So, $(\mathbb{Z}_{25},\cdot)$ cannot be isomorphic to $Q1\!\times\!Q1$ or $Q1\!\times\!(Q1)^*.$

In $(Q1)^*\!\times\!Q1$ and $(Q1)^*\!\times\!(Q1)^*$ for all $\overline{x}=(x,a)\ne\overline{y}=(y,a)$, we have $\bar{y}\bar{x}\cdot\bar{x}=\bar{x}\bar{y}$. But in $(\mathbb{Z}_{25},\cdot)$ we have $22\bar{x}+4\bar{y}=\bar{x}\bar{y}=\bar{y}\bar{x}\cdot\bar{x}= 9\bar{y}+17\bar{x}$, which implies $\bar{x}=\bar{y}$. So, $(\mathbb{Z}_{25},\cdot)$ also cannot be isomorphic to $(Q1)^*\!\times\!Q1$ or $(Q1)^*\!\times\!(Q1)^*$.

In the same manner we can prove that $(\mathbb{Z}_{25},\circ)$ is not isomorphic to
 $Q1\!\times\! Q1$, $Q1\!\times\! (Q1)^*,$ $(Q1)^*\!\times\! Q1$, $(Q1)^*\!\times\! (Q1)^*$.
\end{proof}

\section*{\centerline{8. Translatable groupoids}}\setcounter{section}{8}
\setcounter{theorem}{0}

Patterns of {\it translatability} can be hidden in the Cayley tables of quadratical quasigroups. One can assume the properties of quadratical quasigroups and then calculate whether translatable groupoids of various orders exist with these properties. We proceed to prove that the quadratical quasigroups   $Q1$, $(Q1)^*$, $Q3$, $(Q3)^*$, $Q4$ and $(Q4)^*$ are translatable and that $Q2$ is not translatable.

\begin{definition}\label{D8.1} A finite groupoid $Q=\{1,2,\ldots,n\}$ is called {\it $k$-translatable}, where $1\leqslant k< n$, if its Cayley table is obtained by the following rule: If the first row of the Cayley table is $a_1,a_2,\ldots,a_n$, then the $q$-th row is obtained from the $(q-1)$-st row by taking the last $k$ entries in the $(q-1)-$st row and inserting them as the first $k$ entries of the $q$-th row and by taking the first $n-k$ entries of the $(q-1)$-st row and inserting them as the last $n-k$ entries of the $q$-th row, where $q\in\{2,3,\ldots,n\}$. Then the (ordered) sequence $a_1,a_2,\ldots,a_n$ is called a {\it $k$-translatable sequence} of $Q$ with respect to the ordering $1,2,\ldots,n$. A groupoid is called a {\it translatable groupoid} if it has a $k$-translatable sequence for some $k\in\{1,2,\ldots,n\}$ . 
\end{definition}
 							
It is important to note that a $k$-translatable sequence of a groupoid $Q$ depends on the ordering of the elements in the Cayley table of $Q$. A groupoid may be $k$-translatable for one ordering but not for another (see Example \ref{Ex8.13} below). Unless otherwise stated we will assume that the ordering of the Cayley table is $1,2,\ldots,n$ and the first row of the table is $a_1,a_2,\ldots,a_n$.

\begin{proposition}\label{P8.2} The additive group $\mathbb{Z}_n$ is $(n-1)$-translatable.
\end{proposition}

The example below shows that there are $(n-1)$-translatable quasigroups of order $n$ which are not a cyclic group.

\begin{example}\label{Ex8.3} Consider the following three groupoids of order $n = 5$.
 {\small
$$
\begin{array}{lcccr}
\arraycolsep=1.5mm   \arraycolsep=1.2mm
\begin{array}{c|cccccc}						
\cdot&1&	2&	3&	4&	5\\ \hline
1&	1&	4&	2&	5&	3\\
2&	4&	2&	5&	3&	1\\
3&	2&	5&	3&	1&	4\\
4&	5&	3&	1&	4&	2\\
5&	3&	1&	4&	2&	5
\end{array}&
\ \ \ \ \ & \arraycolsep=1.2mm
\begin{array}{c|cccccc}
\cdot&	1&	2&	3&	4&	5\\ \hline
1&	2&	1&	3&	4&	5\\
2&	1&	3&	4&	5&	2\\
3&	3&	4&	5&	2&	1\\
4&	4&	5&	2&	1&	3\\
5&	5&	2&	1&	3&	4\end{array}&
\ \ \ \ \ &\arraycolsep=1.2mm
\begin{array}{c|ccccc}
\cdot&1&2&3&4&5\\ \hline
1&3&1&5&2&4\\
2&1&5&2&4&3\\
3&5&2&4&3&1\\
4&2&4&3&1&5\\
5&4&3&1&5&2
\end{array}
\end{array}
$$}

These groupoids are $4$-translatable quasigroups but they are not groups.
The first is idempotent, the se\-cond is without idempotents, the third is a cyclic quasi\-group gene\-ra\-ted by $1$ or by $5$.
\end{example}

\begin{proposition}\label{P8.4}
Any $(n-1)$-translatable groupoid of order $n$ is commutative.
\end{proposition}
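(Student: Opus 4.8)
The plan is to extract an explicit closed formula for the product $i\cdot j$ in an $(n-1)$-translatable groupoid and then observe that this formula is manifestly symmetric in $i$ and $j$.

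First I would unwind Definition \ref{D8.1} in the case $k=n-1$. Passing from the $(q-1)$-st row to the $q$-th row then means moving the last $n-1$ entries to the front and the single first entry to the back; in other words, each row is the left cyclic shift by one position of the row above it. Writing the first row as $a_1,a_2,\ldots,a_n$, so that $1\cdot j=a_j$, an easy induction on $i$ shows that the $i$-th row is $a_i,a_{i+1},\ldots$ read cyclically, and hence
$$
i\cdot j=a_{\langle i+j-1\rangle},
$$
where $\langle t\rangle$ denotes the unique representative of $t$ modulo $n$ lying in $\{1,2,\ldots,n\}$. The inductive step is exactly the statement that one left rotation carries the entry $a_{\langle i+j-1\rangle}$ in column $j$ of row $i$ to column $j$ of row $i+1$, which must equal $a_{\langle i+j\rangle}=a_{\langle (i+1)+j-1\rangle}$.

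With this formula in hand, commutativity is immediate. Since $i+j-1=j+i-1$ we have $\langle i+j-1\rangle=\langle j+i-1\rangle$, and therefore $i\cdot j=a_{\langle i+j-1\rangle}=a_{\langle j+i-1\rangle}=j\cdot i$ for all $i,j\in\{1,2,\ldots,n\}$. As this argument uses nothing about the specific ordering of the Cayley table beyond the translatability rule, it shows that $Q$ is commutative.

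The only point requiring genuine care is the index bookkeeping in the inductive step, where an off-by-one error could easily creep in. For that reason I would fix the reduction convention $\langle\cdot\rangle$ explicitly at the outset and separately verify the boundary columns $j=n$ and $j=1$, where the cyclic wrap-around occurs, to confirm that a single left rotation agrees with Definition \ref{D8.1} exactly. Everything else is a routine consequence of the symmetry of $i+j-1$.
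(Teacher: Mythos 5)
Your proof is correct and follows essentially the same route as the paper: the paper invokes the general entry formula $i\cdot j=a_{(i-1)(n-k)+j}$ (indices mod $n$), specializes to $k=n-1$ to get $i\cdot j=a_{i+j-1}$, and concludes by symmetry of $i+j-1$, exactly as you do. Your only addition is to re-derive that formula by induction in the special case $k=n-1$, which is a reasonable piece of extra care but not a different argument.
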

\begin{proof}
In a $k$-translatable groupoid $i\cdot j=a_{(i-1)(n-k)+j}$, where the subscript is calculated modulo $n$. If $k=n-1$, then $i\cdot j=a_{i+j-1}=j\cdot i$. 
\end{proof}

\begin{theorem}\label{T8.5}
There are no $(m-1)$-translatable quadratical quasigroups of order $m$.
\end{theorem}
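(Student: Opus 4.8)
The plan is to argue by contradiction, using the commutativity that $(m-1)$-translatability automatically forces. Suppose $Q$ is a quadratical quasigroup of order $m$ that admits an $(m-1)$-translatable sequence with respect to some ordering of its elements. By Proposition \ref{P8.4}, every $(m-1)$-translatable groupoid of order $m$ is commutative, so $Q$ must satisfy $xy = yx$ for all $x,y\in Q$. The whole argument then reduces to showing that no quadratical quasigroup (of order $>1$) can be commutative.

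Next I would feed this commutativity into the defining identities of Theorem \ref{T2.1}. The bookend identity \eqref{e4} reads $yx\cdot xy = x$; substituting $yx = xy$ turns it into $xy\cdot xy = x$. On the other hand, idempotency \eqref{e1} applied to the single element $xy$ gives $xy\cdot xy = xy$. Comparing the two expressions for $xy\cdot xy$ yields the degenerate law $xy = x$ for all $x,y\in Q$.

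This is the crux of the argument, and it immediately contradicts the quasigroup axioms. Indeed, $xy = x$ makes each left translation $y\mapsto xy$ a constant map, whereas in a quasigroup it must be a bijection of $Q$; a constant map is a bijection only when $|Q| = 1$. By Theorem \ref{T2.3} a quadratical quasigroup has order $m = 4t+1 \geqslant 5$, so $|Q| > 1$ and we have our contradiction. (Equivalently, one may note that the symmetric instance of \eqref{e4} gives $yx = y$, so commutativity forces $x = xy = yx = y$ for all pairs, again impossible once $m > 1$.)

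I do not expect a genuine obstacle here: once Proposition \ref{P8.4} supplies commutativity, the theorem is a short consequence of the incompatibility of commutativity with the bookend and idempotency laws. The only point requiring a moment's care is choosing the right identity to expose the degeneracy, and the bookend identity \eqref{e4} is the natural one, since among the laws listed in Theorem \ref{T2.1} it is the one in which the two products $xy$ and $yx$ appear together, so that commutativity collapses it most directly.
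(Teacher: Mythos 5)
Your argument is correct and is essentially identical to the paper's: the authors also invoke Proposition \ref{P8.4} for commutativity and then compute $x=(y\cdot x)(x\cdot y)=(x\cdot y)(x\cdot y)=x\cdot y$ via the bookend and idempotency laws, concluding that $Q$ cannot be a quasigroup. Your added remarks on why $xy=x$ contradicts the quasigroup axioms merely spell out what the paper leaves implicit.
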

\begin{proof}
By Proposition \ref{P8.4} such a quasigroup is commutative. Since it also is bookend and idempotent, $x=(y\cdot x)\cdot (x\cdot y)=(x\cdot y)\cdot (x\cdot y)=x\cdot y$, so it cannot be a quasigroup.
\end{proof}

The following proposition is obvious.
\begin{proposition}\label{P8.6}
Every $1$-translatable groupoid is unipotent, i.e., in such groupoid there exists an element $a$ such that $x^2=a$ for every $x$.
\end{proposition}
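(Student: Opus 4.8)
The plan is to reduce everything to the closed-form description of the products in a $k$-translatable groupoid that already appears in the proof of Proposition~\ref{P8.4}, namely $i\cdot j=a_{(i-1)(n-k)+j}$, with the subscript read modulo $n$ in the range $\{1,2,\ldots,n\}$. Since unipotency is a statement only about the diagonal entries $i\cdot i$, I would specialize this formula to $j=i$ and $k=1$ and show that the resulting subscript is independent of $i$.

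Concretely, for a $1$-translatable groupoid the diagonal entry is $i\cdot i=a_{(i-1)(n-1)+i}$. The only real step is the arithmetic on the index: expanding gives $(i-1)(n-1)+i=n(i-1)+1$, which is congruent to $1$ modulo $n$. Since the subscript is taken in $\{1,2,\ldots,n\}$, it equals $1$ on the nose, so $i\cdot i=a_1$ for every $i\in\{1,2,\ldots,n\}$. Setting $a=a_1$ then witnesses that $x^2=a$ for all $x$, which is exactly unipotency.

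There is essentially no obstacle here beyond index bookkeeping: the content is entirely captured by the fact that for $k=1$ the quantity $(i-1)(n-1)$ is a multiple of $n$, so adding $i$ returns the subscript to $1$ along the whole diagonal. Geometrically, each successive row of the Cayley table is a cyclic shift of the previous one by a single position, so passing from row $i-1$ to row $i$ sends the diagonal entry of row $i-1$ to the diagonal entry of row $i$; iterating this identification back to the first row forces every $i\cdot i$ to equal $1\cdot 1=a_1$. Thus one may present the argument either through the closed formula of Proposition~\ref{P8.4} or directly from Definition~\ref{D8.1}, and in either form it is a one-line verification.
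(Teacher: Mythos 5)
Your argument is correct; the index computation $(i-1)(n-1)+i=n(i-1)+1\equiv 1\pmod n$ does show that every diagonal entry equals $a_1$. The paper simply declares this proposition obvious and gives no proof, and your verification is exactly the one-line argument (constant diagonal under shift by one) that the authors evidently had in mind.
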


\begin{corollary}\label{C8.7}
There is no idempotent $1$-translatable groupoid of order $n>1$.
\end{corollary}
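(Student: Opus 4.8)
The plan is to obtain a one-line contradiction by feeding the idempotency hypothesis into Proposition~\ref{P8.6}. First I would invoke that proposition: because $Q$ is $1$-translatable, it is unipotent, so there exists a fixed element $a\in Q$ with $x^2=a$ for every $x\in Q$. Next I would bring in the assumed idempotency, which asserts $x^2=x$ for every $x\in Q$. Comparing the two expressions for $x^2$ forces $x=a$ for all $x$, so $Q=\{a\}$ consists of a single element.

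Since this contradicts the standing hypothesis $n>1$, the corollary follows at once. There is essentially no obstacle here: all of the real content has already been packaged into Proposition~\ref{P8.6}, and the corollary is just the observation that unipotency and idempotency cannot coexist except in the trivial one-element case. The only point that deserves care is that the element $a$ produced by unipotency is the \emph{same} for every $x$; it is this uniformity that lets the equations $x^2=a$ and $x^2=x$ collapse all elements to $a$, rather than merely restating that each element is its own square.
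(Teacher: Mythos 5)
Your argument is correct and is exactly the reasoning the paper intends: the corollary is stated as an immediate consequence of Proposition~\ref{P8.6}, with unipotency ($x^2=a$ for a fixed $a$) and idempotency ($x^2=x$) together collapsing the groupoid to the single element $a$, contradicting $n>1$. Your remark about the uniformity of $a$ is the right point to emphasise.
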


\begin{proposition}\label{P8.8}
A $k$-translatable groupoid of order $n$ containing a cancellable element is a quasigroup if and only if $(k,n)=1$.
\end{proposition}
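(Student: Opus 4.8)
The plan is to work entirely through the closed formula
$i\cdot j = a_{(i-1)(n-k)+j}$ recorded in the proof of Proposition \ref{P8.4}, where the subscript is read modulo $n$ with representatives in $\{1,\ldots,n\}$. Writing $d=n-k$ and recalling that $\gcd(d,n)=\gcd(k,n)$, I would reduce the whole statement to two bijectivity criteria for the left and right translations $L_i\colon j\mapsto i\cdot j$ and $R_j\colon i\mapsto i\cdot j$, using the standard fact that a finite groupoid is a quasigroup precisely when every $L_i$ and every $R_j$ is a bijection, and that on a finite set injectivity and bijectivity coincide.

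First I would establish the two structural lemmas. For the left translations, the index map $j\mapsto (i-1)d+j \pmod n$ is a translation of $\mathbb{Z}_n$, hence a bijection; so $L_i$ is a bijection for some (equivalently every) $i$ if and only if the first row $a_1,\ldots,a_n$ is a permutation of $\{1,\ldots,n\}$. For the right translations, as $i$ runs over $\{1,\ldots,n\}$ the indices $(i-1)d+j$ run over the coset $j+\langle d\rangle$ of the cyclic subgroup generated by $d$ in $\mathbb{Z}_n$, a set of exactly $n/\gcd(d,n)$ elements. Hence the image of $R_j$ has at most $n/\gcd(d,n)$ elements; moreover, when $\gcd(d,n)=1$ these indices exhaust $\mathbb{Z}_n$, so $R_j$ is a bijection exactly when the first row is a permutation.

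For the forward implication, suppose $Q$ is a quasigroup. Then each $R_j$ is surjective, so its image has $n$ elements; by the second lemma this is at most $n/\gcd(d,n)$, forcing $\gcd(d,n)=1$, i.e. $\gcd(k,n)=1$. (This direction does not even invoke the cancellable element, which a quasigroup possesses automatically.) For the converse, assume $\gcd(k,n)=1$ and let $c$ be a cancellable element. Cancellability makes $L_c$ injective, hence bijective, so by the first lemma the first row is a permutation of $\{1,\ldots,n\}$ (if one prefers to use right cancellation of $c$ instead, the same conclusion follows from the second lemma, since $\gcd(d,n)=1$ makes the indices of $R_c$ cover all of $\mathbb{Z}_n$). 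Once the first row is a permutation, the first lemma makes every $L_i$ a bijection, and the second lemma together with $\gcd(d,n)=1$ makes every $R_j$ a bijection; hence $Q$ is a quasigroup.

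The arithmetic computations are routine; the only real content is the number-theoretic step that $\{td \bmod n : t=0,\ldots,n-1\}$ equals $\mathbb{Z}_n$ if and only if $\gcd(d,n)=1$, together with the bookkeeping that a single cancellable element upgrades to the assertion that the \emph{entire} first row is a permutation rather than that one row merely behaves well. The cancellable element is precisely what excludes degenerate $k$-translatable tables---for instance one with a constant first row---whose right translations could fail to be bijective even when $\gcd(k,n)=1$; isolating this role is the conceptual crux of the argument.
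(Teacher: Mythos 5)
Your proof is correct and follows essentially the same route as the paper's: both work from the closed formula $x_{ij}=a_{(i-1)(n-k)+j}$, use the cancellable element to force the first row to be a permutation, and reduce everything to the fact that the multiples of $n-k$ exhaust $\mathbb{Z}_n$ exactly when $(k,n)=1$. Your framing via bijectivity of the translations $L_i$ and $R_j$ is a slightly more systematic packaging (and avoids the paper's ``without loss of generality'' reordering of rows), but the substance is identical.
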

\begin{proof}
Let $Q$ be a $k$-translatable groupoid of order $n$ and let $a$ be its cancellable ele\-ment. Then in the Cayley table $[x_{ij}]_{n\times n}$ corresponding to this groupoid the $a$-row contains all elements of $Q$. Without loss of generality we can assume that this is the first row. If this row has the form $a_1,a_2,\ldots,a_n$, then other entries have the form
$x_{ij}=a_{(i-1)(n-k)+j}$,
where the subscript $(i-1)(n-k)+j$ is calculated modulo $n$. Obviously, for fixed $i=1,2,\ldots,n$, all entries $x_{i1},x_{i2},\ldots,x_{in}$ are different. 

If $(n,k)=1$, then also $(n,n-k)=1$. So, in this case, also all $x_{1j},x_{2j},\ldots,x_{nj}$ are different. Hence, this table determines a quasigroup. 

If $(n,k)=t>1$, then $(n,n-k)=t$ and the equation $(i-1)(n-k)=0$ has at least two solutions in the set $\{1,2,\ldots,n\}$. Thus, in the Cayley table of such groupoid at least two rows are identical. Hence such groupoid cannot be a quasigroup.
\end{proof}

\begin{theorem}\label{T8.9}
For every odd $n$ and every $k>1$ such that $(k,n)=1$ there is at most one idempotent $k$-translatable quasigroup. For even $n$ there are no such quasigroups.
\end{theorem}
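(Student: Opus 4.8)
The plan is to reduce idempotency of a $k$-translatable quasigroup to a single affine congruence and to read off both existence and uniqueness from the arithmetic of that congruence. Throughout, let $Q=\{1,2,\ldots,n\}$ be $k$-translatable with first row $a_1,a_2,\ldots,a_n$, so that by the formula established in the proof of Proposition \ref{P8.8} its entries are $i\cdot j=a_{(i-1)(n-k)+j}$, with subscripts read modulo $n$. Since $Q$ is a quasigroup, the first row $(a_1,\ldots,a_n)$ is a permutation of $\{1,\ldots,n\}$; the goal is to show that idempotency either pins this permutation down completely or rules it out entirely.

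First I would impose $i\cdot i=i$ and simplify the resulting subscript. Writing $\psi(i)=(i-1)(n-k)+i \pmod n$ and using $n-k\equiv -k$, one obtains $\psi(i)\equiv(1-k)i+k \pmod n$, so idempotency is precisely the system of constraints $a_{\psi(i)}=i$ for all $i\in\{1,\ldots,n\}$. The decisive observation is that $\psi$ is an affine self-map of $\mathbb{Z}_n$ whose linear coefficient is $1-k$, and hence $\psi$ is a bijection of $\mathbb{Z}_n$ if and only if $(k-1,n)=1$.

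Next I would split into two cases according to whether $\psi$ is injective. If $(k-1,n)=1$, then $\psi$ is a bijection, so as $i$ ranges over $\{1,\ldots,n\}$ the index $\psi(i)$ ranges over every residue exactly once; thus every entry $a_m$ of the first row is forced, namely $a_m=\psi^{-1}(m)$, the first row is uniquely determined, and with it the entire Cayley table, giving at most one idempotent $k$-translatable quasigroup. If instead $(k-1,n)>1$, then $\psi$ fails to be injective, so there exist $i_1\ne i_2$ with $\psi(i_1)=\psi(i_2)$; idempotency would then demand $i_1=a_{\psi(i_1)}=a_{\psi(i_2)}=i_2$, a contradiction, so no idempotent $k$-translatable quasigroup exists. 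In either case the ``at most one'' assertion holds.

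For even $n$ I would finish by a short arithmetic remark. Since $(k,n)=1$ and $n$ is even, $k$ must be odd, whence $k-1$ is even and therefore $(k-1,n)\geqslant 2$; by the second case above, $\psi$ is not injective and no idempotent $k$-translatable quasigroup of even order exists. I do not anticipate a genuine obstacle beyond one bookkeeping point that must be handled carefully: the gcd governing idempotency is $(k-1,n)$, \emph{not} the quasigroup condition $(k,n)=1$ supplied by Proposition \ref{P8.8}. Once the idempotency subscript is simplified to $(1-k)i+k$, the whole statement follows from whether the coefficient $1-k$ is a unit modulo $n$, and it is exactly the failure of this for even $n$ that yields the non-existence half of the theorem.
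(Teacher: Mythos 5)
Your proof is correct and follows essentially the same route as the paper: both reduce idempotency to the diagonal subscript condition $a_{(i-1)(n-k)+i}=i$ and observe that everything is governed by whether $k-1$ is a unit modulo $n$, i.e.\ by the congruence $(t-t')(k-1)\equiv 0\ (\mathrm{mod}\ n)$. Your write-up is in fact slightly tidier, since you explicitly dispose of the odd-$n$ subcase $(k-1,n)>1$ (where no such quasigroup exists, so ``at most one'' holds vacuously), which the paper's proof passes over.
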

\begin{proof}
Let $a_1,a_2,a_3,\ldots,a_n$ be the first row of a $k$-translatable quasigroup $Q$. 

This quasigroup is idempotent only in the case when in its Cayley table we have $1=x_{11}$, $2=x_{22}=a_{(n-k)+2}$, $3=x_{33}=a_{2(n-k)+3}$, $4=x_{44}=a_{3(n-k)+4}$, and so on. This means that the main diagonal of the table $[x_{ij}]_{n\times n}$ should contains ele\-ments $a_1,a_{(n-k)+2},a_{2(n-k)+3},\ldots,a_{(n-1)(n-k)+n}$, where all sub\-scripts are calculated modulo $n$. Obviously, $a_{t(n-k)+t}=a_{t'(n-k)+t'}$ only in the case when $t-tk\equiv t'-t'k({\rm mod}\,n)$, i.e., $(t-t')(k-1)\equiv 0({\rm mod}\,n)$. 
If $n$ is odd and $(n,k)=1$, then for some $k$ also is possible $(n,k-1)=1$. In this case the equation $z(k-1)\equiv 0({\rm mod}\,n)$ has only one solution $z=0$, so $t=t'$. Hence the diagonal of the table $[x_{ij}]_{n\times n}$ contains $n$ different elements.

If $n$ is even and $(n,k)=1$, then $k$ is odd. Thus, $k-1$ is even and $(n,k-1)\ne 1$. Hence, the equation $z(k-1)\equiv 0({\rm mod}\,n)$ has at least two solutions. Consequently, the diagonal of the table $[x_{ij}]_{n\times n}$ contains at least two equal elements. This contradicts to the fact that this quasigroup is idempotent. Therefore, for even $n$ there are no idempotent $k$-translatable quasigroups.
\end{proof}

\begin{corollary}\label{C8.10}
For every odd $n$ and every $k>1$ such that $(n,k)=(n,k-1)=1$ there is exactly one idempotent $k$-translatable quasigroup of order $n$.
\end{corollary}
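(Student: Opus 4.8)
The plan is to notice that Theorem~\ref{T8.9} already supplies the uniqueness half of the statement: under the hypothesis $(n,k)=1$, which is part of our assumptions, there is \emph{at most one} idempotent $k$-translatable quasigroup of order $n$. Hence the only thing left to establish is \emph{existence}, namely to exhibit one such quasigroup whenever $(n,k)=(n,k-1)=1$; combining this with the uniqueness from Theorem~\ref{T8.9} then gives ``exactly one''.

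To produce the example I would build an explicit linear quasigroup on $\mathbb{Z}_n$. Since $(n,k-1)=1$, the element $k-1$ is invertible modulo $n$, so I can set
\[\beta \equiv -(k-1)^{-1}\pmod n, \qquad \alpha \equiv k(k-1)^{-1}\pmod n,\]
and define a groupoid on $Q=\mathbb{Z}_n=\{1,2,\dots,n\}$ (with $n\equiv 0$) by $x\cdot y\equiv \alpha x+\beta y\pmod n$, ordering the Cayley table by $1,2,\dots,n$.

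Then I would verify three things. First, $\alpha+\beta\equiv(k-1)(k-1)^{-1}\equiv 1$, so $x\cdot x\equiv(\alpha+\beta)x\equiv x$ and the groupoid is idempotent. Second, $\beta$ is a unit because $(n,k-1)=1$ and $\alpha=-\beta k$ is a unit because $(n,k)=1$; consequently the left and right translations $y\mapsto\alpha x+\beta y$ and $x\mapsto\alpha x+\beta y$ are bijections of $\mathbb{Z}_n$, so $Q$ is a quasigroup (equivalently, every element is cancellable and $(n,k)=1$, so Proposition~\ref{P8.8} applies). Third, writing the first row as $a_j=1\cdot j\equiv\alpha+\beta j$, the defining relation $\alpha\equiv-\beta k$ gives $\alpha+\beta k\equiv 0$, and using $n-k\equiv -k$ one computes $a_{(i-1)(n-k)+j}\equiv\alpha+\beta\big((i-1)(n-k)+j\big)\equiv\alpha i+\beta j= x_{ij}$; thus the table satisfies $x_{ij}=a_{(i-1)(n-k)+j}$, which is exactly the rule making $Q$ $k$-translatable.

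The step I expect to require the most care is this last one: matching the linear form $\alpha x+\beta y$ to the translatability rule $x_{ij}=a_{(i-1)(n-k)+j}$, since it is index bookkeeping modulo $n$ that must respect the paper's convention that both the ordering and the entry subscripts run over $\{1,\dots,n\}$. I would also keep track of the interplay of the two coprimality hypotheses: $(n,k-1)=1$ is what makes $\alpha,\beta$ well defined and forces idempotency, while $(n,k)=1$ secures the quasigroup property. It is worth remarking in passing that for even $n$ these two conditions cannot hold together, since one of $k,k-1$ is even, which is consistent with the non-existence clause of Theorem~\ref{T8.9}.
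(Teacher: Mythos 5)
Your proposal is correct, and it is actually more complete than what the paper offers: the paper states Corollary~\ref{C8.10} with no proof at all, treating it as an immediate consequence of Theorem~\ref{T8.9}, whose proof only establishes the \emph{at most one} direction (by showing that under $(n,k-1)=1$ the idempotency condition forces the first row, and hence the whole table, uniquely). The existence half is left implicit. You supply it explicitly with the affine model $x\cdot y\equiv \alpha x+\beta y\pmod n$, $\beta\equiv-(k-1)^{-1}$, $\alpha\equiv k(k-1)^{-1}$, and your three verifications are all sound: $\alpha+\beta\equiv 1$ gives idempotency, the two coprimality hypotheses make $\alpha$ and $\beta$ units so the groupoid is a quasigroup, and $\alpha+k\beta\equiv 0$ together with the index computation $a_{(i-1)(n-k)+j}\equiv\alpha i+\beta j=x_{ij}$ gives $k$-translatability (this last identity is exactly the criterion the authors later isolate as Lemma~\ref{L9.1}, so your construction is consistent with, and anticipates, their Section~9 machinery). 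The first row you obtain, $a_j\equiv\alpha+\beta j$, satisfies $a_{(i-1)(n-k)+i}=i$, so it coincides with the row forced by Corollary~\ref{C8.11}; hence your witness is the unique one predicted by Theorem~\ref{T8.9} and the two halves fit together cleanly. In short: same uniqueness argument as the paper, plus an explicit existence construction the paper omits.
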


\begin{corollary}\label{C8.11}
The first row of an idempotent $k$-translatable quasigroup $Q=\{1,2,\ldots,n\}$ has the form
$1,a_2,a_3,\ldots,a_n$, where $a_{(i-1)(n-k)+i({\rm mod}\,n)}=i\,$ for every $i\in Q$.
\end{corollary}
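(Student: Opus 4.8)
The plan is to read the claim off directly from the entry formula for a $k$-translatable groupoid combined with idempotency, so that essentially no new computation is required beyond what appears in the proofs of Proposition \ref{P8.8} and Theorem \ref{T8.9}.

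First I would recall the indexing scheme. If the first row of the Cayley table $[x_{ij}]_{n\times n}$ of a $k$-translatable groupoid $Q$ is $a_1,a_2,\ldots,a_n$, then by the translation rule of Definition \ref{D8.1} every entry satisfies
$$
x_{ij}=a_{(i-1)(n-k)+j},
$$
where the subscript is reduced modulo $n$. This is exactly the formula established in the proof of Proposition \ref{P8.8} (and used again in Theorem \ref{T8.9}), so I may assume it.

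Next I would impose idempotency. Since $Q$ is idempotent we have $x_{ii}=i\cdot i=i$ for every $i\in Q$. Substituting $j=i$ into the displayed formula gives
$$
i=x_{ii}=a_{(i-1)(n-k)+i},
$$
which is precisely the asserted relation $a_{(i-1)(n-k)+i\,({\rm mod}\,n)}=i$. Specializing to $i=1$ gives $a_1=x_{11}=1$, so the first row indeed begins with $1$, and the general shape $1,a_2,a_3,\ldots,a_n$ follows.

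There is no genuine obstacle here; the statement is an immediate corollary of the indexing scheme together with idempotency. The only point needing (minor) care is that the subscript $(i-1)(n-k)+i$ must be interpreted modulo $n$ so that it lands in $\{1,\ldots,n\}$ and indexes an actual first-row entry $a_{(i-1)(n-k)+i\,({\rm mod}\,n)}$. This is presumably why the result is recorded as a corollary rather than proved from scratch.
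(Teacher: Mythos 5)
Your proof is correct and follows exactly the route the paper intends: the corollary is extracted from the computation in the proof of Theorem \ref{T8.9}, where the entry formula $x_{ij}=a_{(i-1)(n-k)+j}$ (established in Proposition \ref{P8.8}) is combined with $x_{ii}=i$ to read off the diagonal entries $a_{(i-1)(n-k)+i}=i$. Nothing further is needed.
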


\begin{example}\label{Ex8.12}
Consider an idempotent quasigroup $Q=\{1,2,\ldots,7\}$. From the proof of Theorem \ref{T8.9} it follows that 
if this quasigroup is $3$-translatable, then the first row of its Cayley table has the form $1,4,7,3,6,2,5$. If it is $4$-translatable, then the first row has the form $1,3,5,7,2,4,6$.
\end{example}

\begin{example}\label{Ex8.13} The following example shows that for $Q1=\{a,ab,ba,b,aba\}$ the sequence $a, ba, aba, ab, b$ is $3$-translatable, but $Q1$ presented in the form $Q1'=\{a,b,ab,ba,aba\}$ has no translatable sequences.
 
\bigskip
{\small $
\begin{array}{|c|c|c|c|c|c|c|c|c|c|c|c|c|c|c|c|c|c|c|}\hline\rule{0mm}{3mm}
\!\!Q1\!\!\!&\!\!a\!\!\!&\!\!ab\!\!\!&\!\!ba\!\!\!&\!\!b\!\!\!&\!aba\!\!\! \!\\ \hline
a&a&ba&aba&ab&b\\	\hline 
ab&aba&ab&b&a&ba\\ \hline
ba&b&a&ba&aba&ab\\ \hline
b&ba&aba&ab&b&a\\ \hline
aba&ab&b&a&ba&aba\\ \hline
\end{array}
\ \ \ \ \ \ \ \ \ \ \
\begin{array}{|c|c|c|c|c|c|c|c|c|c|c|c|c|c|c|c|c|c|c|}\hline\rule{0mm}{3mm}
\!\!Q1'\!\!\!&\!\!a\!\!\!&\!\!b\!\!\!&\!\!ab\!\!\!&\!\!ba\!\!\!&\!\!aba\!\!\!\\ \hline
a&a&ab&ba&aba&b\\	\hline 
b&ba&b&aba&ab&a\\ \hline
ab&aba&a&ab&b&ba\\ \hline
ba&b&aba&a&ba&ab\\ \hline
aba&ab&ba&b&a&aba\\ \hline
\end{array}$}

\bigskip\noindent
The sequence $a,aba,b,a*b,b*a$ is $2$-translatable for $(Q1)^*=\{a,b*a,a*b,b,aba\}.$
$(Q1')^*=\{a,b,b*a,a*b,aba\}$ has no translatable sequence.

 {\small $$
\begin{array}{lcr}
\begin{array}{|c|c|c|c|c|c|c|c|c|c|c|c|c|c|c|c|c|c|c|}\hline\rule{0mm}{3mm}
\!\!\!(Q1)^*\!\!\!&\!\!a\!\!\!&\!\!b*a\!\!\!&\!\!a*b\!\!\!&\!\!b\!\!\!&\!\!aba\!\!\\ \hline
a&a&aba&b&\!a*b\!&\!b*a\! \\ \hline
\!b*a\!&\!a*b\!&\!b*a\!&a&aba&b\\ \hline
\!a*b\!&aba&b&\!a*b\!&\!b*a\!&a\\ \hline
b&\!b*a\!&a&aba&b&\!a*b\!\\ \hline
aba&b&\!a*b\!&\!b*a\!&a&aba\\ \hline
\end{array}
&&
\begin{array}{|c|c|c|c|c|c|c|c|c|c|c|c|c|c|c|c|c|c|c|}\hline\rule{0mm}{3mm}
(Q1)^*\!\!\!&\!\!\!a\!\!\!\!&\!\!b\!\!\!&\!\!b*a\!\!\!&\!\!a*b\!\!\!&\!\!aba\!\!\!\\ \hline
a&a&\!a*b\!&aba&b&b*a\\	\hline 
b&b*a&b&a&aba&a*b\\ \hline
b*a&a*b&aba&b*a&a&b\\ \hline
a*b&aba&b*a&b&a*b&a\\ \hline
aba&b&a&a*b&b*a&aba\\ \hline
\end{array}
\end{array}
$$}
\indent
By Corollary \ref{C8.10}, the quasigroup $Q1$ is isomorphic to a $3$-translatable quasigroup $(\mathbb{Z}_5,\circ)$ with the operation $x\circ y=4x+2y({\rm mod}\,5)$. The dual quasigroup $(Q1)^*$ is isomorphic to a $2$-translatable quasigroup $(\mathbb{Z}_5,\diamond)$ with the operation $x\diamond y=2x+4y({\rm mod}\,5)$.
\end{example}

\begin{theorem}\label{T8.14}
A groupoid isomorphic to a $k$-translatable groupoid also has a $k$-translatable sequence.
\end{theorem}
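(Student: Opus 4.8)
The plan is to exploit the fact, stressed right after Definition \ref{D8.1}, that $k$-translatability is not an intrinsic property of an abstract groupoid but of a chosen ordering of its elements. An isomorphism is precisely the tool needed to carry a good ordering from one groupoid to the other, so the whole argument reduces to transporting the ordering along the isomorphism and checking that the translatability structure survives.

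First I would record the closed-form description of $k$-translatability that is already implicit in the proofs of Propositions \ref{P8.4} and \ref{P8.8}: if $Q=\{1,2,\ldots,n\}$ has first row $a_1,a_2,\ldots,a_n$, then $Q$ is $k$-translatable with respect to this ordering precisely when $i\cdot j=a_{(i-1)(n-k)+j}$ for all $i,j$, the subscript being reduced modulo $n$ to lie in $\{1,\ldots,n\}$. This follows by induction on $i$ directly from the shifting rule of Definition \ref{D8.1}, and it is the formula I will manipulate, since the verbal ``move the last $k$ entries to the front'' description is awkward to combine with an isomorphism.

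Next, let $f\colon Q\to Q'$ be an isomorphism onto the groupoid $Q'$, and impose on $Q'$ the transported ordering $e_i:=f(i)$, so that $Q'=\{e_1,e_2,\ldots,e_n\}$ as an ordered set. Writing $*$ for the operation of $Q'$, the $(i,j)$ entry of the Cayley table of $Q'$ with respect to this ordering is $e_i*e_j=f(i)*f(j)=f(i\cdot j)=f\bigl(a_{(i-1)(n-k)+j}\bigr)$. Putting $b_\ell:=f(a_\ell)$, this reads $e_i*e_j=b_{(i-1)(n-k)+j}$, again with subscripts reduced modulo $n$.

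Finally I would read off the conclusion. Taking $i=1$ shows that the first row of $Q'$ is $b_1,b_2,\ldots,b_n=f(a_1),\ldots,f(a_n)$, and the identity $e_i*e_j=b_{(i-1)(n-k)+j}$ is exactly the closed-form characterization from the first step applied to $Q'$. Hence $f(a_1),\ldots,f(a_n)$ is a $k$-translatable sequence of $Q'$ with respect to the ordering $e_1,\ldots,e_n$. There is essentially no hard step here; the only thing to watch is that the statement is really one about the \emph{existence} of a suitable ordering on $Q'$, so the genuine content is the choice $e_i=f(i)$. Once the closed form is available, the homomorphism property $f(i\cdot j)=f(i)*f(j)$ does all the work.
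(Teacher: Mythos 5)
Your proposal is correct and follows essentially the same route as the paper: both transport the ordering along the isomorphism (your $e_i=f(i)$ is the paper's $\alpha(1),\ldots,\alpha(n)$) and use the homomorphism property to carry the entry formula $x_{ij}=a_{(i-1)(n-k)+j}$ over to the image groupoid. The only cosmetic difference is that you conclude directly from the closed-form formula, whereas the paper re-verifies the row-shifting condition $a_{i,n-k+t}=a_{i+1,t}$ explicitly; these are equivalent.
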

\begin{proof}
Let $\alpha$ be an isomorphism from a $k$-translatable groupoid $(Q,\cdot)$ to a groupoid $(S,\circ)$. If $Q$ is with ordering $1,2,\ldots,n$, then on $S$ we consider ordering induced by $\alpha$, namely $\alpha(1),\alpha(2),\ldots,\alpha(n)$. Suppose that the first row of the Cayley table of $Q$ has the form $a_1,a_2,\ldots,a_n$. Then in the $i$-th row and $j$-th column of this table is $x_{ij}=a_{(i-1)(n-k)+j({\rm mod}\,n)}$. Consequently, in the $\alpha(i)$-row and $\alpha(j)$-th column of the Cayley table $[z_{ij}]$ of $S$ we have $z_{\alpha(i),\alpha(j)}=\alpha(i)\circ\alpha(j)=\alpha(i\cdot j)=\alpha(x_{ij})$. Since $Q$ is $k$-translatable, for every $1\leqslant t\leqslant k$, we have $a_{i,n-k+t}=a_{i+1,t}$. Thus,
$z_{\alpha(i),\alpha(n-k+t)}=\alpha(i)\circ\alpha(n-k+t)=\alpha(x_{i,n-k+t})=\alpha(x_{i+1,t})=
\alpha((i+1)\cdot t)=\alpha(i+1)\circ\alpha(t)=z_{\alpha(i+1),\alpha(t)}$. This shows that $S$ also is $k$-translatable (for ordering $\alpha(1),\alpha(2),\ldots,\alpha(n)$).
\end{proof}
\begin{theorem}\label{T8.15} An idempotent cancellable groupoid of order $9$ is not translatable.
\end{theorem}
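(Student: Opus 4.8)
The plan is to argue by contradiction. Suppose $Q$ is an idempotent, cancellable groupoid of order $9$ that is translatable. Since $Q$ is finite and (two-sided) cancellable, each translation map is an injection of a finite set and hence a bijection, so $Q$ is a quasigroup; in particular every element of $Q$ is cancellable. By the definition of translatability there is an ordering $1,2,\dots,9$ of the elements of $Q$ and an integer $k\in\{1,\dots,8\}$ for which this ordering is a $k$-translatable sequence. My goal is first to cut down the admissible values of $k$ using the divisibility criteria of this section, and then to eliminate each survivor.

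To restrict $k$, I would invoke three earlier results in turn. By Proposition \ref{P8.8}, a $k$-translatable groupoid of order $9$ containing a cancellable element is a quasigroup only when $(k,9)=1$, so $k\in\{1,2,4,5,7,8\}$. Corollary \ref{C8.7} removes $k=1$, since an idempotent groupoid of order greater than $1$ cannot be $1$-translatable. Finally I would rerun the diagonal computation from the proof of Theorem \ref{T8.9}: idempotency forces the main diagonal of the Cayley table to read $1,2,\dots,9$, and this requires the positions $(i-1)(9-k)+i \pmod 9$ to be pairwise distinct, equivalently $(9,k-1)=1$. Since $(9,3)=(9,6)=3$, this discards $k=4$ and $k=7$, leaving exactly $k\in\{2,5,8\}$.

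For each remaining value of $k$, Corollaries \ref{C8.10} and \ref{C8.11} pin down a single candidate: the first row is fixed by the rule $a_{(i-1)(9-k)+i\,({\rm mod}\,9)}=i$, and the shifting rule of Definition \ref{D8.1} then determines the entire $9\times 9$ array; Theorem \ref{T8.14} is what allows me to test this one fixed ordering rather than quantifying over all relabellings. The concluding step is to examine these three explicit tables and exhibit, in each, a violation of the standing hypotheses on $Q$. I expect this final case analysis to be the main obstacle: the reduction to $k\in\{2,5,8\}$ is purely arithmetical, whereas closing out the three circulant-type tables is computational, and it is precisely here that any structural feature of $Q$ beyond bare idempotency and cancellability would have to enter. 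Indeed, if the three candidate tables were themselves to turn out to be genuine idempotent quasigroups, that would indicate the stated hypotheses need to be strengthened (for instance to the medial and bookend identities enjoyed by the quadratical quasigroup $Q2$) in order for the conclusion to stand, so verifying the three tables carefully is the decisive part of the argument.
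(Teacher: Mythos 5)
Your reduction to $k\in\{2,5,8\}$ is correct, and it is in fact sharper than what the paper's own proof does. The paper argues that the congruence $3(9-k)+4\equiv 4\ ({\rm mod}\,9)$ must hold, concludes that only $k=3$ and $k=6$ need be considered, and then kills those two cases because the fourth row would repeat the first. But that congruence is not forced by anything: the relation $x_{44}=a_{3(9-k)+4}=4$ merely records in which position of the first row the symbol $4$ sits, not that it sits in position $4$ (indeed $a_4=1\cdot 4\ne 4$ by cancellation). So the paper's argument is a non sequitur, and it silently skips exactly the values $k\in\{2,5,8\}$ that your combination of Proposition \ref{P8.8}, Corollary \ref{C8.7} and the diagonal computation from Theorem \ref{T8.9} correctly isolates.

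The decisive point is the one you flag at the end, and your suspicion is borne out: the three surviving tables are genuine idempotent quasigroups, so the statement as it stands is false and no case analysis can close the argument. Concretely, the first rows forced by Corollary \ref{C8.11} for $k=2,5,8$ are those of the linear quasigroups on $\mathbb{Z}_9$ given by $x\cdot y=2x+8y$, $x\cdot y=8x+2y$ and $x\cdot y=5x+5y$ (mod $9$), respectively; each is idempotent because the coefficients sum to $1$, each is a quasigroup because both coefficients are coprime to $9$, and each is $k$-translatable by the criterion $a+kb\equiv 0\ ({\rm mod}\,9)$ of Lemma \ref{L9.1}. What survives is Corollary \ref{C8.16}: in all three candidates one computes $yx\cdot xy=2ab\,x+(a^2+b^2)y=5x+5y\not\equiv x$, so none is bookend, and hence the unique quadratical quasigroup of order $9$ is not translatable. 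Your proposed remedy --- strengthening ``idempotent and cancellable'' to include, say, the bookend identity --- is exactly what is needed to make the theorem true.
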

\begin{proof} Let $a_1,a_2,a_3,a_4,a_5,a_6,a_7,a_8,a_9$ be the first row of the Cayley table of an idempotent cancellable groupoid $Q$. Then obviously $a_i\ne a_j$ for $i\ne j$. If $Q$ is $k$-translatable, then $x_{44}=4=a_{3(9-k)+4}$. Since $3(9-k)+4\equiv 4({\rm mod}\,9)$ only for $k=3$ and $k=6$, this groupoid can be $3$-translatable or $6$-translatable. But in this case the fourth row coincides with the first, so $Q$ cannot be cancellable.
 \end{proof}

\begin{corollary}\label{C8.16} The quadratical quasigroups of order $9$ are not translatable. 
\end{corollary}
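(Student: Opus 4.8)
The plan is to obtain Corollary~\ref{C8.16} as an immediate application of Theorem~\ref{T8.15}. The only thing to check is that a quadratical quasigroup of order $9$ satisfies the hypotheses of that theorem, namely that it is an idempotent cancellable groupoid of order $9$.

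First I would recall that, by the idempotency identity $\eqref{e1}$ of Theorem~\ref{T2.1}, every quadratical quasigroup is idempotent. Being a quasigroup, it is in particular cancellable. Moreover, order $9$ is admissible for a quadratical quasigroup, since $9 = 4\cdot 2 + 1$ is of the form guaranteed by Theorem~\ref{T2.3}; indeed such a quasigroup exists and is unique, namely $Q2$, so the statement is not vacuous. Hence any quadratical quasigroup of order $9$ is precisely an idempotent cancellable groupoid of order $9$.

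Applying Theorem~\ref{T8.15} to this groupoid then yields at once that it is not translatable, which is the assertion of the corollary. The only potential obstacle is verifying the hypotheses, and this presents no real difficulty: both idempotency and cancellability are among the defining structural properties of quadratical quasigroups already recorded in Theorem~\ref{T2.1}.
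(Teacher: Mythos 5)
Your proposal is correct and is exactly the argument the paper intends: the corollary is stated immediately after Theorem~\ref{T8.15} precisely because a quadratical quasigroup is idempotent by \eqref{e1} and cancellable as a quasigroup, so the theorem applies directly. Your additional remark that order $9$ is admissible (and realized by $Q2$) is a harmless bonus not needed for the deduction.
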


\begin{theorem}\label{T8.17}
An idempotent, bookend quasigroup $Q$, where $Q=\{1,2,\ldots,n\}$, is $k$-translatable if and only if for every $i\in Q$ we have $i=a_{(s-1)(n-k)+t({\rm mod}\,n)}$, where $s,t\in Q$ are such that
\begin{equation}\label{e14}
\arraycolsep=.5mm\rule{30mm}{0mm}
\left\{\begin{array}{rcll}k-2&\equiv&s(k-1)({\rm mod}\,n),\\
ik-1&\equiv &t(k-1)({\rm mod}\,n).
\end{array}\right.
\end{equation}
\end{theorem}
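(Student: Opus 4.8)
The plan is to work throughout with the explicit entry formula for a $k$-translatable table. As in the proof of Proposition~\ref{P8.8}, the assertion that $Q=\{1,\dots,n\}$ is $k$-translatable is precisely that
$$
i\cdot j=a_{(i-1)(n-k)+j\;({\rm mod}\,n)}
$$
for all $i,j\in Q$, where $a_1,\dots,a_n$ is the first row; equivalently $i\cdot j=a_{\,j-(i-1)k\;({\rm mod}\,n)}$. Since $Q$ is a quasigroup, Proposition~\ref{P8.8} gives $(k,n)=1$; moreover the very solvability of the first congruence of the statement forces $(k-1,n)=1$ (if $d\mid k-1$ and $d\mid n$, then from $k-2\equiv s(k-1)$ we get $d\mid k-2$, whence $d\mid 1$). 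These two coprimalities guarantee that the displayed system has a unique solution $(s,t)$ for each $i$, so the right-hand condition is well posed.

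For the forward implication I would first exploit idempotency \eqref{e1}. Setting $j=i$ in the entry formula and using $x=x^2$ shows that the value $i$ occupies a single, computable position of the first row, namely $p_i\equiv k-i(k-1)\pmod n$; since the first row of a quasigroup is a permutation this determines $a_{p_i}=i$ for every $i$. Next I would feed the entry formula into the bookend law \eqref{e4}, $(ji)(ij)=i$: after reducing the nested subscripts the position congruence collapses to the single numerical identity $k^2\equiv-1\pmod n$ (the arithmetic trace of bookend on a translatable table; consistently, $3^2\equiv-1\pmod 5$ for the $3$-translatable $Q1$ of order $5$). Armed with $k^2\equiv-1$ and the coprimalities above, the concluding step is purely arithmetical: solve the displayed system for $s$ and $t$ and verify that $(s-1)(n-k)+t$ reduces modulo $n$ to the position $p_i$, so that $a_{(s-1)(n-k)+t}=i$. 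I would present this as one short congruence computation rather than grinding through every reduction.

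For the converse I would assume the displayed first-row condition and reconstruct the entire multiplication. That condition determines the first row outright, as the permutation carrying each position $p_i$ to the value $i$. I would then argue that idempotency \eqref{e1}, the bookend law \eqref{e4}, and cancellativity leave no freedom in the remaining rows: each $x_{ij}$ is forced, through a chain of applications of these identities starting from first-row data, to equal $a_{(i-1)(n-k)+j}$, which is exactly the cyclic-$k$-shift rule of Definition~\ref{D8.1}. Where convenient I would invoke the uniqueness already available from Theorem~\ref{T8.9} and Corollary~\ref{C8.10}—at most one idempotent $k$-translatable quasigroup exists for the admissible $n,k$—to identify the reconstructed groupoid.

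The hard part will be this converse reconstruction: upgrading information about the first row alone to the global shift structure of the whole table. Idempotency fixes the diagonal and bookend links the $(i,j)$ and $(j,i)$ entries, but converting these local constraints into the statement ``every row is the $k$-shift of the previous one'' is the substantive step, and the one most sensitive to the precise shape of the congruences. I expect the modular bookkeeping—keeping the solution $(s,t)$ single-valued and verifying that the two congruences remain mutually consistent under $k^2\equiv-1$—to be the principal technical obstacle.
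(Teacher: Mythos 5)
Your forward direction misses the one identification that makes the paper's proof a two-line affair: $s$ and $t$ are not abstract solutions of the congruences to be found and then checked --- they \emph{are} the table entries $s=1\cdot i=a_i$ and $t=i\cdot 1=x_{i1}=a_{(i-1)(n-k)+1}$. The paper simply writes the bookend instance $i=(1\cdot i)(i\cdot 1)=s\cdot t=x_{st}=a_{(s-1)(n-k)+t}$ and then uses Corollary \ref{C8.11} to convert ``$a_i=s$'' and ``$a_{(i-1)(n-k)+1}=t$'' into the two congruences; no further verification is needed and $k^2\equiv-1\pmod n$ never enters. Your ``solve the system and verify the subscript equals $p_i$'' route can be made to work, but only after two repairs you do not make: (i) you must actually establish $k^2\equiv-1\pmod n$ (your bookend computation does yield this, and it is genuinely needed for the subscript identity to close); and (ii) the first congruence as printed, $k-2\equiv s(k-1)$, is the $i=2$ specialization used in the corollary that follows --- the derivation from ``$a_i=s$'' gives $k-i\equiv s(k-1)$, and with the printed $k-2$ your final check $(s-1)(n-k)+t\equiv p_i\pmod n$ reduces to $k(i-2)\equiv 0\pmod n$, which fails for every $i\neq 2$. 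So as written your verification step does not close, and you give no indication of having run the computation far enough to notice.

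The second and larger gap is the converse. Your plan --- reconstruct every entry $x_{ij}$ from the first row using idempotency, bookend and cancellativity --- rests entirely on the unproved assertion that these identities leave ``no freedom in the remaining rows''; nothing you cite supports this, and Theorem \ref{T8.9} and Corollary \ref{C8.10} give uniqueness of the idempotent $k$-translatable quasigroup of a given order, not that a quasigroup whose first row satisfies the stated condition must be $k$-translatable. The paper disposes of the converse in one sentence, reading the condition as nothing more than the bookend instance $(1\cdot i)(i\cdot 1)=i$ transcribed into subscripts via Corollary \ref{C8.11}; whatever one thinks of that, the converse you set yourself is a much stronger reconstruction theorem for which you offer no argument, only the hope that one exists.
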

\begin{proof}
Let $1,a_2,a_3,\ldots,a_n$ be the first row of the Cayley table $[x_{ij}]$ of an idempotent, bookend quasigroup $Q=\{1,2,3,\ldots,n\}$. If it is $k$-translatable, then, by Corollary \ref{C8.11}, we have $a_{(i-1)(n-k)+i({\rm mod}\,n)}=i$ for each $i\in Q$.

Moreover, in this quasigroup for every $i\in Q$ should be 
$$
\arraycolsep=.5mm\begin{array}{rl}
i&=(1\cdot i)\cdot (i\cdot 1)=a_i\cdot x_{i1}=a_i\cdot a_{(i-1)(n-k)+1({\rm mod}\,n)}\\[3pt]
&=s\cdot t=x_{st}=a_{(s-1)(n-k)+t({\rm mod}\,n)},
\end{array}
$$
where
$$
\left\{\begin{array}{ll}
a_i=a_{(s-1)(n-k)+s({\rm mod}\,n)}=s,\\
a_{(i-1)(n-k)+1({\rm mod}\,n)}=a_{(t-1)(n-k)+t({\rm mod}\,n)}=t
\end{array}\right.
$$
for some $s,t\in \{1,2,\ldots,n\}$ satisfying \eqref{e14}.
 
The converse statement is obvious.
\end{proof}

\begin{corollary}
A quadratical quasigroup of order $25$ can be $k$-translatable only for $k=7$ or $k=18$.
\end{corollary}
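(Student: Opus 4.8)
The plan is to reduce the problem, via the structure theory already developed for idempotent bookend $k$-translatable quasigroups, to a single congruence in $k$. By Theorem \ref{T2.1} a quadratical quasigroup $Q$ is idempotent and bookend, and being a quasigroup it is cancellative, so the analysis behind Theorem \ref{T8.17} applies. Fix an ordering realizing $Q=\{1,2,\dots,25\}$ and assume $Q$ is $k$-translatable. Two divisibility facts come for free: since $Q$ is a $k$-translatable quasigroup with cancellable elements, Proposition \ref{P8.8} gives $(k,25)=1$; and since $Q$ is idempotent with $25$ odd, the argument of Theorem \ref{T8.9} forces $(k-1,25)=1$ as well (otherwise the diagonal of the table cannot carry $25$ distinct idempotents). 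Hence both $k$ and $1-k$ are units modulo $25$.

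First I would pin down the first row. By Corollary \ref{C8.11}, idempotency together with $k$-translatability gives $a_{(i-1)(25-k)+i}=i$ for every $i$; since $(k-1,25)=1$ the map $i\mapsto (i-1)(25-k)+i\equiv i(1-k)+k \pmod{25}$ is a bijection, so this already determines the whole first row as the linear sequence $a_p\equiv (p-k)(1-k)^{-1}\pmod{25}$. Substituting into $x_{ij}=a_{(i-1)(25-k)+j}$ collapses the entire Cayley table to the affine rule
\[
i\cdot j\equiv (j-ik)(1-k)^{-1}\equiv \alpha\, i+\beta\, j \pmod{25},\qquad \alpha=-k(1-k)^{-1},\quad \beta=(1-k)^{-1},
\]
with $\alpha+\beta=1$. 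Thus, under this ordering, $Q$ is an affine quasigroup built on the additive group $\mathbb{Z}_{25}$, i.e.\ induced by $\mathbb{Z}_{25}$ in the sense of Theorem \ref{T2.4}.

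Now I would impose that $Q$ is quadratical, and two equivalent routes close the argument. Directly, feeding the affine rule into the bookend identity $(yx)(xy)=x$ of Theorem \ref{T2.1} and simplifying makes all dependence on $x,y$ cancel, leaving the single congruence $k^2+1\equiv 0\pmod{25}$. Alternatively, since $Q$ is quadratical and of the form $ax+(1-a)y$ over $\mathbb{Z}_{25}$ with $a=\alpha$, Theorem \ref{T2.4} yields $2\alpha^2-2\alpha+1\equiv 0\pmod{25}$, and clearing the factor $(1-k)^{-2}$ rewrites this as the same relation $k^2+1\equiv 0$. Finally I would solve $k^2\equiv -1\pmod{25}$: since $7^2=49\equiv-1$, the solutions are exactly $k\equiv\pm 7$, that is $k\in\{7,18\}$ (both of which indeed satisfy $(k,25)=(k-1,25)=1$), completing the proof.

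The main obstacle is the structural step of the second paragraph: recognizing that idempotency and translatability alone already force the affine, hence $\mathbb{Z}_{25}$-induced, form, which is what licenses the rigid quadratical constraint $2a^2-2a+1\equiv0$ of Theorem \ref{T2.4}. Once the affine form is in hand the reduction of the bookend identity to $k^2+1\equiv0$ is a routine substitution, and extracting $\sqrt{-1}\equiv\pm7\pmod{25}$ is immediate; reassuringly, the resulting values $k=7,18$ are precisely the two translatabilities already recorded for the two quadratical quasigroups induced by $\mathbb{Z}_{25}$.
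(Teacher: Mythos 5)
Your proof is correct, but it follows a genuinely different route from the paper's. The paper works entirely through Theorem \ref{T8.17}: it takes the necessary condition at $i=2$, first discards $k\in\{5,6,10,11,15,16,20,21\}$ and $k\in\{1,24,25\}$ by divisibility and by Theorem \ref{T8.5}/Corollary \ref{C8.7}, and then runs a $14$-column table computing $s$, $t$, $x_{st}$ and $a_{27-k}$ for each surviving $k$, observing that $x_{st}=a_{27-k}$ holds only for $k=7,18$. You instead make Corollary \ref{C8.10} explicit: from $(k,25)=(k-1,25)=1$ and Corollary \ref{C8.11} you solve for the whole first row and collapse the table to the affine rule $i\cdot j\equiv\alpha i+\beta j\pmod{25}$ with $\alpha+\beta=1$, after which the bookend identity (equivalently, relation \eqref{ea} of Theorem \ref{T2.4}) reduces to the single congruence $k^2+1\equiv 0\pmod{25}$, whose only solutions are $k\equiv\pm 7$. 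I checked the key computations: the index map $i\mapsto i(1-k)+k$ is indeed a bijection when $(k-1,25)=1$, the resulting $x_{ij}=(j-ik)(1-k)^{-1}$ is correct, and $(yx)(xy)=x$ for an affine rule with $\alpha+\beta=1$ is equivalent to $2\alpha\beta\equiv 1$, i.e.\ to $k^2+1\equiv 0$. What your approach buys is conceptual clarity and generality: it shows that any translatable quadratical quasigroup of order $n$ is forced into the linear form over $\mathbb{Z}_n$ with $k^2\equiv -1\pmod n$ — essentially one direction of the result the paper announces for a future publication — and it replaces the case check by solving one quadratic congruence. What the paper's approach buys is that it stays strictly within the already-proved Theorem \ref{T8.17} and requires no structural claim beyond a finite verification. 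One small point of care: your parenthetical appeal to Theorem \ref{T2.4} presupposes that the affine form makes $Q$ ``induced by $\mathbb{Z}_{25}$'' in the sense of Theorem \ref{T-gr}; this is true here, but your direct substitution into the bookend identity is the cleaner and fully self-contained way to close the argument.
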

\begin{proof}
Let $Q=\{1,2,\ldots,25\}$ be a quadratical quasigroup.
By Theorem \ref{T8.17}, in this quasigroup for $i=2$ should be 
$$
a_{27-k({\rm mod}\,25)}=x_{st}=a_{(s-1)(25-k)+t({\rm mod}\,25)},
$$
where $s,t\in\{1,2,\ldots,25\}$ satisfy the equations 
$$\arraycolsep=.5mm
\left\{\begin{array}{rcll}k-2&\equiv&s(k-1)({\rm mod}\,25),\\
2k-1&\equiv &t(k-1)({\rm mod}\,25).
\end{array}\right.
$$
To reduce the number of solutions of these equations observe that 
$$
x_{i1}\ne 1\longleftrightarrow a_{(i-1)(25-k)+1({\rm mod}\,25)}\ne 1=a_1\longleftrightarrow (i-1)k\equiv\!\!\!\!\!/\; 0({\rm mod}\,25).
$$ 
The last, for $i=6$, is possible only for $k\ne 5,10,15,20$.

Also 
$$
x_{ii}\ne 1\longleftrightarrow a_{(i-1)(25-k)+i({\rm mod}\,25)}\ne 1=a_1\longleftrightarrow (i-1)(k-1)\equiv\!\!\!\!\!/\;  0({\rm mod}\,25),
$$ 
which for $i=6$ is possible only for $k\ne 6,11,16,21$.

Hence $Q$ cannot be $k$-translatable for $k\in\{5,6,10,11,15,16,20,21\}$. 
By Theorem \ref{T8.5} and Corollary \ref{C8.7} it also cannot be $k$-translatable for $k\in\{1,24,25\}$.

In other cases, for $i=2$, we obtain
$$
\begin{array}{|c|c|c|c|c|c|c|c|c|c|c|c|c|c|c|}\hline
k&2&3&4&7&8&9&12&13&14&17&18&19&22&23\\ \hline
s&25&13&9&5&8&4&10&3&24&15&23&19&20&18\\ \hline
t&3&15&19&23&20&24&18&25&4&12&5&9&8&10\\\hline
\!x_{st}\!&a_5&a_4&\!a_{12}\!&\!a_{20}\!&\!a_{14}\!&\!a_{22}\!&\!a_{10}\!&\!a_{24}\!&a_{7}&\!a_{24}\!&a_{9}&\!a_{17}\!&\!a_{15}\!&\!a_{19}\!\\\hline
\!\!a_{27-k}\!\!&\!a_{25}\!&\!a_{24}\!&\!a_{23}\!&\!a_{20}\!&\!a_{19}\!&\!a_{18}\!&\!a_{15}\!&\!a_{14}\!&\!a_{13}\!&\!a_{10}\!&a_{9}&a_{8}&a_{5}&a_{4}\\\hline
\end{array}
$$

Since $x_{st}=a_{27-k}$ only for $k=7$ and $k=18$, a quasigroup of order $25$ can be $k$-translatable only for $k=7$ and $k=18$.

Direct computations shows that $\mathbb{Z}_{25}$ with the operation $x\cdot y=22x+4y({\rm mod}\,25)$ is an example of a $7$-translatable quadratical quasigroup of order $25$. Its dual quasigroup is a $18$-translatable. 
\end{proof}

By changing the order of rows and columns in Tables 3, 4, 5 and 6 we obtain the following two theorems.

\begin{theorem}\label{T8.19} The sequence $11,12,33,21,31,34,24,32,13,14,13,aba,22$ is $5$-trans\-latable for $Q3=\{ 11, 14, 34, 12, 23, 24, 33, aba, 32, 21, 22, 13, 31\}$. 

The sequence $\;11^*\!,12^*\!,23^*\!,aba^*\!,22^*\!,13^*\!,14^*\!,34^*\!,24^*\!,32^*\!,33^*\!,21^*\!,31^*\;$
is $8$-trans\-latable for $(Q3)^*\!=\!\{11^*\!,14^*\!,31^*\!,13^*\!,21^*\!,22^*\!,33^*\!,aba^*\!,32^*\!,23^*\!,24^*\!,12^*\!,34^*\}.$
\end{theorem}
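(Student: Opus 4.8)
The plan is to bypass any direct reconstruction of the $13\times 13$ tables and instead exploit the isomorphisms recorded at the end of Section~6 together with Theorem~\ref{T8.14}, which makes $k$-translatability an isomorphism invariant once the ordering is transported along the isomorphism. Since $Q3\cong(\mathbb{Z}_{13},\cdot)$ with $x\cdot y=11x+3y\pmod{13}$ and $(Q3)^*\cong(\mathbb{Z}_{13},\circ)$ with $x\circ y=3x+11y\pmod{13}$, it suffices to find an ordering of $\mathbb{Z}_{13}$ making the first quasigroup $5$-translatable and the second $8$-translatable, and then to push those orderings forward.

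First I would treat a general $\mathbb{Z}_m$-quasigroup $x\cdot y=ax+by$ with $b=1-a$ from Theorem~\ref{T2.4}, equipped with the natural ordering in which position $p\in\{1,\dots,m\}$ carries the element $p-1\in\mathbb{Z}_m$. The entry in row $p$, column $q$ is then $a(p-1)+b(q-1)\pmod m$, and a one-line computation shows that the cyclic-shift relation $x_{pq}=x_{(p-1),(q-k)}$ of Definition~\ref{D8.1} (indices read modulo $m$) holds for every cell exactly when $bk\equiv-a\pmod m$. As $m=13$ is prime, $b$ is a unit and this pins $k$ down uniquely: for $a=11,\,b=3$ we get $k\equiv-11\cdot3^{-1}\equiv5$, while for the dual $a=3,\,b=11$ we get $k\equiv-3\cdot11^{-1}\equiv8\pmod{13}$. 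This already establishes that the desired $5$- and $8$-translatable orderings exist.

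What remains is transcription rather than mathematics: one must check that the ordering and first row displayed in the statement are precisely the images, under the isomorphism $\phi$ of Section~6, of the natural ordering $0,1,\dots,12$. Here the transported ordering is $\phi(0),\phi(1),\dots,\phi(12)$ and the transported first row has $q$-th entry $\phi(0)\cdot\phi(q-1)=\phi(3(q-1)\bmod13)$; unwinding the symbols $11,12,\dots,34,aba$ via Definition~\ref{D3.1} identifies each $\phi(i)$ with one of them. The main obstacle is exactly this bookkeeping: pinning down $\phi$ and matching the two explicit sequences entry-by-entry (a slip here is easy to make and should be guarded against). A wholly elementary alternative, and the one signalled by the remark preceding the theorem, is to permute the rows and columns of Table~3 (respectively Table~4) into the stated order and verify directly that each of the $13$ rows is the prescribed cyclic shift of its predecessor; this is finite but tedious, and is precisely the computation that the argument above organises and shortens.
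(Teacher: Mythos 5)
Your route is genuinely different from the paper's. The paper's entire proof is the sentence preceding the theorem: permute the rows and columns of Tables 3 and 4 into the stated order and observe that each row is the prescribed cyclic shift of its predecessor --- a brute-force check with nothing exhibited. You instead pass through the isomorphisms $Q3\cong(\mathbb{Z}_{13},\cdot)$ with $x\cdot y=11x+3y$ and $(Q3)^*\cong(\mathbb{Z}_{13},\circ)$ with $x\circ y=3x+11y$, observe that a linear quasigroup $ax+by$ on $\mathbb{Z}_m$ in its natural ordering is $k$-translatable exactly when $a+kb\equiv 0\ ({\rm mod}\ m)$ (this is the paper's own Lemma \ref{L9.1}, proved only in the next section), read off $k=5$ and $k=8$ since $11+5\cdot 3\equiv 3+8\cdot 11\equiv 0\ ({\rm mod}\ 13)$, and invoke Theorem \ref{T8.14} to transport the property to $Q3$ and $(Q3)^*$. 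That is cleaner than the paper's check and actually explains where $5$ and $8$ come from.

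However, as written your argument proves only that \emph{some} ordering of $Q3$ is $5$-translatable and \emph{some} ordering of $(Q3)^*$ is $8$-translatable. The theorem asserts a \emph{specific} ordering and a \emph{specific} first row, and you explicitly defer the identification of the isomorphism and the entry-by-entry match --- but that is precisely the content of the statement, so the deferred step is not optional. It is also where the action turns out to be: reading row $11$ of Table 3 in the column order $11,14,34,12,23,24,33,aba,32,21,22,13,31$ gives the first row $11,12,33,21,31,34,24,32,13,14,23,aba,22$, whereas the printed sequence has $13$ in the eleventh position; since $13$ then occurs twice and $23$ not at all, the printed first row cannot be a row of a quasigroup and must contain a typographical error. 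Your high-level argument cannot detect or repair this; only the bookkeeping you set aside can confirm (or, here, correct) the stated sequences. So: correct and more illuminating existence argument, but the theorem as literally stated is not yet proved by it --- and, as printed, needs the eleventh entry changed from $13$ to $23$.
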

\begin{theorem}\label{T8.20} The sequence 

\smallskip
\centerline{$11, 12, 42, 43, 13, 14, 33, 21, 31, 44, 23, aba, 22, 41, 34, 24, 32$}

\smallskip\noindent
is $13$-translatable for 

\smallskip
\centerline{$Q4=\{11,14,23,24,43,31,41,12,33,aba,32,13,44,34,42,21,22\}$.}

\smallskip\noindent
The sequence 

\smallskip
\centerline{$11^*,12^*,34^*,24^*,32^*,44^*,23^*,aba^*,22^*,41^*,33^*,21^*,31^*,13^*,14^*,43^*,42^*$}

\smallskip\noindent
is $4$-translatable for 

\smallskip
\centerline{
$(Q4)^*\!=\! \{11^*\!, 14^*\!, 21^*\!, 22^*\! , 44^*\! , 34^*\! , 42^*\! , 13^*\! , 33^*\! , aba^*, 32^*\! , 12^* \!, 43^*\! , 31^*\! , 41^*\! , 23^*\! , 24^*\}$.}
\end{theorem}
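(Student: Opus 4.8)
The plan is to read this off as a direct verification that the prescribed reordering of Tables 5 and 6 yields $k$-translatable Cayley tables, using the closed form $x_{ij}=a_{(i-1)(n-k)+j\,({\rm mod}\,n)}$ established in the proof of Proposition \ref{P8.8}. For $Q4$ I would fix $n=17$, relabel the seventeen elements as $1,2,\ldots,17$ in the listed order $11,14,23,24,43,31,41,12,33,aba,32,13,44,34,42,21,22$, and take $k=13$, so that the translation step shifts labels by $n-k=4$. For $(Q4)^*$ I would relabel according to its listed order and take $k=4$, so the shift is $n-k=13$.

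First I would recover the first row of the reordered table straight from Table 5, that is, compute $11\cdot x$ for each $x$ in the new order; this reproduces the claimed sequence $11,12,42,43,13,14,33,21,31,44,23,aba,22,41,34,24,32$. It then remains to confirm, for $q=2,\ldots,17$, that row $q$ is row $q-1$ cyclically shifted in the sense of Definition \ref{D8.1}, equivalently that $x_{ij}=a_{4(i-1)+j\,({\rm mod}\,17)}$ holds for every entry. The identical procedure applied to Table 6, now with shift $13$, settles $(Q4)^*$.

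To see why the values $k=13$ and $k=4$ are forced, and to shorten the checking, I would use the isomorphisms $Q4\cong(\mathbb{Z}_{17},\cdot)$ with $x\cdot y=11x+7y\,({\rm mod}\,17)$ and $(Q4)^*\cong(\mathbb{Z}_{17},\circ)$ with $x\circ y=7x+11y\,({\rm mod}\,17)$ recorded after Table 6. Writing such a quasigroup in the form $x\cdot y=ax+(1-a)y$ of Theorem \ref{T2.4} and imposing $x_{ij}=a_{(i-1)(n-k)+j}$ against the natural ordering $0,1,\ldots,16$ of $\mathbb{Z}_{17}$ forces $a(k-1)\equiv k\,({\rm mod}\,17)$; for $a=11$ this gives $k=13$ and for $a=7$ it gives $k=4$. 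Since translatability is preserved under isomorphism by Theorem \ref{T8.14}, with the ordering transported across the isomorphism, the theorem amounts to the assertion that the two explicitly listed orderings are exactly the images of the natural $\mathbb{Z}_{17}$ ordering.

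The main obstacle is bookkeeping rather than ideas: one must keep the relabeling consistent while reading two $17\times 17$ tables, and, on the isomorphism route, one must still confirm that the listed element orderings coincide with the images of $0,1,\ldots,16$ under a chosen isomorphism rather than merely some affine variant of it. Either way the verification is mechanical, and I expect that once the first row is confirmed the shift-by-$4$ (respectively shift-by-$13$) pattern propagates through all remaining rows without incident.
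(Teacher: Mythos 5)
Your proposal is correct. The paper offers no written proof at all beyond the remark that the theorem is obtained ``by changing the order of rows and columns in Tables 5 and 6,'' i.e.\ exactly the direct verification you describe as your primary route: relabel the $17$ elements in the listed order, read off the first row, and check that each subsequent row is the previous one shifted by $n-k=4$ (resp.\ $13$) in the sense of Definition \ref{D8.1}. Your secondary route is a genuine addition the paper does not spell out here: deriving $k=13$ and $k=4$ from $x\cdot y=11x+7y$ and $x\circ y=7x+11y$ over $\mathbb{Z}_{17}$ via the condition $(k-1)a\equiv k\ ({\rm mod}\ 17)$ of Theorem \ref{T9.3} (equivalently $a+kb\equiv 0$ from Lemma \ref{L9.1}), and then transporting translatability through Theorem \ref{T8.14}. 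That route buys an explanation of why these two values of $k$ are forced (and, by Theorem \ref{T9.3}, unique), but you correctly flag its residual obligation: Theorem \ref{T8.14} only yields $k$-translatability with respect to the ordering induced by the isomorphism, so one must still confirm that the two explicitly listed orderings are the images of the natural $\mathbb{Z}_{17}$ ordering under a concrete isomorphism; otherwise the table check cannot be skipped. Either way the verification is mechanical and sound.
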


Quasigroups $Q3$ and $(Q3)^*$ are isomorphic, respectively, to quasigroups $(\mathbb{Z}_{13},\cdot)$ and $(\mathbb{Z}_{13},\circ)$, where $x\cdot y=11x+3y({\rm mod}\,13)$ and $x\circ y=3x+11y({\rm mod}\,13)$.

Quasigroups $Q4$ and $(Q4)^*$ are isomorphic, respectively, to quasigroups $(\mathbb{Z}_{17},\cdot)$ and $(\mathbb{Z}_{17},\circ)$, where $x\cdot y=11x+7y({\rm mod}\,17)$ and $x\circ y=7x+11y({\rm mod}\,17)$.

\section*{\centerline{9. Translatable quasigroups induced by groups $\mathbb{Z}_n$}}\setcounter{section}{9}\setcounter{theorem}{0}

In this section we describe quadratical quasigroups induced by groups $\mathbb{Z}_n$. We start with some general results.

\begin{lemma}\label{L9.1}
A quasigroup of the form $x*y=ax+by+c$ induced by a group $\mathbb{Z}_n$ is $k$-translatable if and only if  $a+kb\equiv 0({\rm mod}\,n)$.
\end{lemma}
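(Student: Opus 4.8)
The statement concerns a quasigroup on $\mathbb{Z}_n$ given by $x*y = ax + by + c$, and I want to show it is $k$-translatable if and only if $a + kb \equiv 0 \pmod n$. The natural approach is to write down the Cayley table entries explicitly as a function of the row and column indices, and then to translate the combinatorial translatability condition of Definition \ref{D8.1} into a congruence. First I would fix the ordering of $\mathbb{Z}_n$ to be the natural one $0, 1, 2, \ldots, n-1$ (or $1, \ldots, n$ relabeled appropriately) so that the element in position $j$ of the first row is $a_j = a\cdot 1 + b\cdot(j-1) + c$ under this ordering, i.e. the first row is the sequence $(i\text{-th row fixed at } i=1)$ reading off $0*j = bj + c$ across columns $j$. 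Being careful with the index conventions in Definition \ref{D8.1} is the first order of business.

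**Key steps.**
The core computation is to express the $(i,j)$ entry of the table. With rows and columns indexed by $\mathbb{Z}_n$, the entry in row $i$, column $j$ is $x_{ij} = i * j = ai + bj + c$. Translatability asserts that each row is a cyclic shift of the previous one by $k$ positions; concretely (as used in the proof of Proposition \ref{P8.8}) it means $x_{ij} = a_{(i-1)(n-k)+j}$ where subscripts are read modulo $n$, i.e. the entry at row $i$, column $j$ equals the first-row entry at column $(i-1)(n-k)+j$. So I would set the two expressions equal: the direct value $ai + bj + c$ must match the first-row value at shifted position, namely $b\bigl((i-1)(n-k)+j\bigr) + c$ (taking the first row to be row $i=1$, so $a_\ell = b\ell + c$ up to the base point). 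The constant $c$ and the $bj$ term cancel on both sides, leaving the condition $ai \equiv b(i-1)(n-k) \pmod n$ for all $i$. Reducing $n-k \equiv -k \pmod n$ and simplifying the $(i-1)$ factor, this collapses to $a \equiv -bk \pmod n$, that is $a + bk \equiv 0 \pmod n$. The ``for all $i$'' quantifier is handled because the dependence on $i$ is linear and the $i$-coefficients must match identically; I would verify that matching the coefficient of $i$ gives exactly the stated congruence and that the $i$-independent part is then automatically consistent.

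**Main obstacle.**
The genuinely delicate point is not the algebra — which is routine linear bookkeeping mod $n$ — but getting the index conventions of Definition \ref{D8.1} exactly right, especially the off-by-one in which row counts as ``the first row'' and how the shift by $k$ interacts with the modular reduction. I would pin this down by re-deriving the formula $x_{ij} = a_{(i-1)(n-k)+j}$ directly from the shift rule (last $k$ entries move to the front), as was done implicitly in Proposition \ref{P8.8} and Theorem \ref{T8.17}, and then check the equivalence is genuinely an ``if and only if'': the forward direction extracts the congruence from translatability applied to any single pair of adjacent rows, and the converse direction substitutes $a \equiv -bk$ back to confirm every row is indeed the $k$-shift of its predecessor. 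Since the map $j \mapsto bj + c$ is a bijection precisely when $(b,n)=1$, I would note that the first row being a genuine enumeration (needed for the sequence to be well defined as a $k$-translatable sequence) is compatible with the hypothesis, but the congruence characterization itself holds regardless, so no coprimality assumption is needed for the stated equivalence.
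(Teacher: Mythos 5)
Your proposal is correct and takes essentially the same route as the paper: the paper likewise writes out the $i$-th row explicitly as $a(i-1)+c,\,a(i-1)+b+c,\ldots,a(i-1)+(n-1)b+c$, equates the first entry of row $i+1$ with the $(n-k+1)$-st entry of row $i$, and cancels to get $a+kb\equiv 0\ ({\rm mod}\ n)$. The only caveat is the index slip you yourself flag: your displayed condition $ai\equiv b(i-1)(n-k)$ mixes a $0$-based first row ($a_\ell=b\ell+c$) with the $1$-based shift formula $a_{(i-1)(n-k)+j}$ and so does not collapse to $a\equiv -bk$ for all $i$; using either convention consistently (e.g. $a_\ell=a+b\ell+c$ with the $(i-1)$-shift) removes the spurious $i$-dependence and completes the argument exactly as you intend.
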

\begin{proof}The $i$-th row of the Cayley table of this quasigroup has the form
$$
a(i-1)+c,a(i-1)+b+c,a(i-1)+2b+c,\ldots,a(i-1)+(n-1)b+c,
$$
the $(i+1)$-row has the form
$$
ai+c,ai+b+c,ai+2b+c,\ldots,ai+(n-1)b+c.
$$
So, this quasigroup is $k$-translatable if and only if 
$$
ai+c=a(i-1)+(n-k)b+c({\rm mod}\,n),
$$
i.e., if and only if $a+kb\equiv 0({\rm mod}\,n)$.
\end{proof}
\begin{corollary}\label{C9.2}
A quasigroup $(\mathbb{Z}_n,\diamond)$, where $x\diamond y=ax+y+c$, is $(n-a)$-trans\-latable.
\end{corollary}

\begin{theorem}\label{T9.3}
Each quadratical quasigroup induced by group $\mathbb{Z}_m$ is $k$-translatable for some $1<k<m-1$, namely for $k$ such that $(a-1)k\equiv a({\rm mod}\,m)$. This is valid for exactly one value of $k$.
\end{theorem}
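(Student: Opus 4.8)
The plan is to reduce the statement to the translatability criterion of Lemma~\ref{L9.1} and then extract the necessary arithmetic from the defining equation~\eqref{ea} of Theorem~\ref{T2.4}.

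First I would invoke Theorem~\ref{T2.4} to write the quasigroup as $x\cdot y=ax+(1-a)y$, that is, in the form $x*y=ax+by+c$ of Lemma~\ref{L9.1} with $b=1-a$ and $c=0$. Lemma~\ref{L9.1} then says the quasigroup is $k$-translatable exactly when $a+k(1-a)\equiv 0\pmod{m}$, which rearranges to the asserted condition $(a-1)k\equiv a\pmod{m}$. In this way the whole theorem becomes a statement about solving a single linear congruence in $k$.

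The key step is to show that $a-1$ is invertible modulo $m$. From~\eqref{ea} we have $2a^2-2a+1\equiv 0\pmod{m}$, hence $2a(a-1)\equiv -1\pmod{m}$; this exhibits $-2$ as an inverse of $a(a-1)$, so $a(a-1)$ is a unit and therefore both $a$ and $a-1$ are units modulo $m$. In particular $\gcd(a-1,m)=1$, so the congruence $(a-1)k\equiv a\pmod{m}$ has a unique solution $k$ in $\{0,1,\dots,m-1\}$. This settles existence and uniqueness simultaneously, since distinct admissible values of $k$ represent distinct residues.

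It then remains to locate this unique $k$ strictly between $1$ and $m-1$, which I would do by ruling out the three boundary residues. If $k\equiv 0$ then $a\equiv 0$, contradicting that $a$ is a unit; if $k\equiv 1$ then $a-1\equiv a$, forcing $m\mid 1$; and if $k\equiv m-1\equiv -1$ then $1-a\equiv a$, i.e. $2a\equiv 1$, whence substituting $2a^2=a(2a)=a$ into~\eqref{ea} yields $a\equiv 0$, again impossible. Thus $k\notin\{0,1,m-1\}$, so $1<k<m-1$. The only real content I expect here is the observation that~\eqref{ea} makes $a-1$ a unit; the remaining work is routine bookkeeping with the congruence and the three excluded cases.
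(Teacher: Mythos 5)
Your proof follows the paper's argument in essentially the same way: reduce to the congruence $(a-1)k\equiv a\ ({\rm mod}\ m)$ via Theorem~\ref{T2.4} and Lemma~\ref{L9.1}, then get existence and uniqueness from $\gcd(a-1,m)=1$; you additionally make explicit the verification that $a-1$ is a unit (which the paper merely asserts, via $2a(a-1)\equiv -1$) and the exclusion of the boundary values $k\in\{0,1,m-1\}$ (which the paper's proof leaves implicit, these being covered elsewhere by Corollary~\ref{C8.7} and Theorem~\ref{T8.5}). One small arithmetic slip: in the case $k\equiv -1$, substituting $2a\equiv 1$ into \eqref{ea} gives $a-2a+1\equiv 0$, i.e.\ $a\equiv 1$ rather than $a\equiv 0$; the contradiction still stands, since $a-1$ is a unit.
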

\begin{proof}
By Theorem \ref{T2.4} and Lemma \ref{L9.1} a quadratical quasigroup induced by $\mathbb{Z}_m$ is $k$-translatable if and only if there exist $k$ such that $a\equiv (1-a)k({\rm mod}\;m)$, i.e., $(a-1)k\equiv a({\rm mod}\;m)$. Since $(a-1,m)=1$, the last equation has exactly one solution in $\mathbb{Z}_m$ (cf. \cite{Vin}).
\end{proof}

\begin{theorem}\label{T9.4}
A quadratical quasigroup $(\mathbb{Z}_m,\cdot)$ with $x\cdot y=ax+(1-a)y$ is $k$-translatable if and only if its dual quasigroup $(\mathbb{Z}_m,\circ)$, where $x\circ y=(1-a)x+ay$, is $(m-k)$-translatable.
\end{theorem}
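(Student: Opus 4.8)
The plan is to push everything through Lemma~\ref{L9.1} and to invoke the defining relation \eqref{ea} at exactly the point where it is needed. First I would note that, by Theorem~\ref{T2.4}, the operation $x\cdot y=ax+(1-a)y$ is of the affine form $ax+by+c$ treated in Lemma~\ref{L9.1}, with $b=1-a$ and $c=0$; hence $(\mathbb{Z}_m,\cdot)$ is $k$-translatable exactly when $a+k(1-a)\equiv 0\pmod m$. The dual operation $x\circ y=(1-a)x+ay$ has the same affine form with the two coefficients interchanged, so Lemma~\ref{L9.1} says $(\mathbb{Z}_m,\circ)$ is $(m-k)$-translatable exactly when $(1-a)+(m-k)a\equiv 0\pmod m$. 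Since $ma\equiv 0\pmod m$, the latter simplifies to $(1-a)-ka\equiv 0$, i.e.\ $ka\equiv 1-a\pmod m$.

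It therefore remains to prove that the two congruences
\[
a+k(1-a)\equiv 0 \quad\text{and}\quad ka\equiv 1-a \pmod m
\]
are equivalent. Here I would bring in \eqref{ea}. Rewriting it as $2a(a-1)\equiv -1\pmod m$ shows at once that $a$ and $1-a$ are units modulo $m$; and regrouping $2a^2-2a+1=a^2+(a-1)^2$ puts it in the symmetric form $a^2+(1-a)^2\equiv 0\pmod m$.

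The equivalence then follows from a single computation: multiplying the first congruence by $1-a$ gives $a(1-a)+k(1-a)^2\equiv 0$, multiplying the second by $a$ gives $ka^2-a(1-a)\equiv 0$, and adding these produces $k\bigl[(1-a)^2+a^2\bigr]\equiv 0$, which is automatic by the symmetric form of \eqref{ea}. Thus the two multiplied congruences are negatives of one another, so one vanishes if and only if the other does; and because $1-a$ and $a$ are units, each multiplied congruence vanishes if and only if the unmultiplied one does. This yields $a+k(1-a)\equiv 0 \Longleftrightarrow ka\equiv 1-a$, which is exactly the asserted equivalence between $k$-translatability of $(\mathbb{Z}_m,\cdot)$ and $(m-k)$-translatability of $(\mathbb{Z}_m,\circ)$.

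I expect the only real obstacle to be recognizing that \eqref{ea} is precisely what couples the two conditions. Everything else is bookkeeping with Lemma~\ref{L9.1} together with the reduction $(m-k)a\equiv -ka$; but without the quadratical relation the dual condition $ka\equiv 1-a$ would in general differ from the primal condition $a+k(1-a)\equiv 0$, and the correspondence $k\leftrightarrow m-k$ would break. Putting \eqref{ea} into the form $a^2+(1-a)^2\equiv 0$ is the one genuinely non-mechanical step.
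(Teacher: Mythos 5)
Your proof is correct and follows essentially the same route as the paper: both reduce the two translatability conditions to congruences via Lemma~\ref{L9.1} and then couple them using \eqref{ea} in the form $a^2+(1-a)^2\equiv 0\pmod m$. The paper phrases the final step as $k\equiv\frac{a}{a-1}$, $t\equiv\frac{a-1}{a}$ and $k+t\equiv\frac{2a^2-2a+1}{a(a-1)}\equiv 0$, which is exactly your unit-multiplication argument written with modular fractions.
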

\begin{proof}
Let $(\mathbb{Z}_m,\cdot)$ be $k$-translatable, then $(a-1)k\equiv a({\rm mod}\,m)$, i.e., $k\equiv \frac{a}{a-1}({\rm mod}\;m)$. If $(\mathbb{Z}_m,\circ)$ is $t$-translatable, then $ak\equiv (a-1)({\rm mod}\,m)$, i.e., $t\equiv \frac{a-1}{a}({\rm mod}\;m)$. ($\frac{a}{a-1}$ and $\frac{a1}{a}$ are well defined in $\mathbb{Z}_m$ because $(a,m)=(a-1,m)=1$.) Thus $k+t=\frac{2a^2-2a+1}{a(a-1)}=0({\rm mod}\;m)$, by Theorem \ref{T2.4}. Hence $k+t=m$.
\end{proof}

Note that this theorem is not valid for quasigroups which are not quadratical. Indeed, a quasigroup $(\mathbb{Z}_7,\cdot)$ with $x\cdot y=4x+y({\rm mod}\,7)$ is $3$-translatable, but its dual quasigroup $(\mathbb{Z}_7,*)$, where $x*y=x+4y({\rm mod}\,7)$, is $5$-translatable.  

\begin{corollary}\label{C9.}
There are no self-dual quadratical quasigroups induced by groups $\mathbb{Z}_m$.
\end{corollary}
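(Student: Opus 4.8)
The plan is to exhibit a parity obstruction: self-duality would force a quadratical quasigroup induced by $\mathbb{Z}_m$ to carry two different translatability numbers, $k$ and $m-k$, which is impossible once that number is known to be an isomorphism invariant. Write the quasigroup as $(\mathbb{Z}_m,\cdot)$ with $x\cdot y=ax+(1-a)y$ and its dual as $(\mathbb{Z}_m,\circ)$, $x\circ y=(1-a)x+ay$. By Theorem~\ref{T9.3} the former is $k$-translatable for the unique $k$ with $(a-1)k\equiv a\ ({\rm mod}\ m)$, and by Theorem~\ref{T9.4} the latter is $(m-k)$-translatable. Since $m=4t+1$ is odd by Theorem~\ref{T2.3}, we have $k\neq m-k$, so a contradiction will follow as soon as the two isomorphic quasigroups are shown to share their translatability number.

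The subtle point is that Theorem~\ref{T8.14} transports $k$-translatability only together with the ordering induced by the isomorphism: it tells us $(\mathbb{Z}_m,\circ)$ has a $k$-translatable sequence in \emph{some} ordering, whereas Theorem~\ref{T9.4} produces an $(m-k)$-translatable sequence in the \emph{natural} one. Because translatability genuinely depends on the ordering (Example~\ref{Ex8.13}), one cannot yet conclude $k=m-k$. Closing this gap is the heart of the matter, and I would do it by proving that for a quadratical quasigroup induced by $\mathbb{Z}_m$ the translatability number does not depend on the ordering.

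To prove this ordering-independence, suppose a bijection $\sigma\colon\mathbb{Z}_m\to\mathbb{Z}_m$ presents an induced quasigroup $x\cdot y=ax+(1-a)y$ as a $k'$-translatable quasigroup. Spelling out Definition~\ref{D8.1} gives, for all $i,j$, the identity $a\sigma(i)+(1-a)\sigma(j)=a\sigma(0)+(1-a)\sigma(ci+j)$ with $c\equiv -k'\ ({\rm mod}\ m)$. Setting $\tau(i)=\sigma(i)-\sigma(0)$ this simplifies to $\tau(ci+j)=\frac{a}{1-a}\tau(i)+\tau(j)$, whence (taking $j=0$) $\tau(ci+j)=\tau(ci)+\tau(j)$. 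Since $(k',m)=1$ by Proposition~\ref{P8.8}, the multiples $ci$ exhaust $\mathbb{Z}_m$, so $\tau$ is additive and therefore $\tau(i)=pi$ for a unit $p$; that is, every translatable ordering is affine. Substituting $\tau(i)=pi$ into the case $j=0$, namely $\tau(ci)=\frac{a}{1-a}\tau(i)$, and using $\frac{a}{1-a}\equiv -k\ ({\rm mod}\ m)$, forces $c\equiv -k$, i.e. $k'=k$. Hence any ordering in which the quasigroup is translatable returns the same number $k$. The same computation applied with $a$ replaced by $1-a$ shows that every translatable ordering of $(\mathbb{Z}_m,\circ)$ returns its natural-ordering value $m-k$.

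With ordering-independence established the proof closes quickly. By Theorem~\ref{T8.14} the isomorphism $(\mathbb{Z}_m,\cdot)\cong(\mathbb{Z}_m,\circ)$ furnishes a $k$-translatable ordering of $(\mathbb{Z}_m,\circ)$; but by the lemma every translatable ordering of $(\mathbb{Z}_m,\circ)$ returns $m-k$. Therefore $k=m-k$, so $m=2k$ is even, contradicting $m=4t+1$. I expect the functional-equation step---showing that a translatable reordering of an induced quasigroup must be affine and hence cannot change the translatability number---to be the one genuine obstacle; the remainder is a direct assembly of Theorems~\ref{T9.3}, \ref{T9.4}, \ref{T8.14} and the parity of $m$.
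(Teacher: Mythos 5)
Your proof is correct, and its skeleton is the argument the paper intends but does not write out: by Theorem~\ref{T9.3} the quasigroup is $k$-translatable in its natural ordering, by Theorem~\ref{T9.4} its dual is $(m-k)$-translatable, and since $m=4t+1$ is odd these numbers differ, so self-duality is excluded provided the translatability number is an isomorphism invariant. The paper states the corollary with no proof and implicitly treats that last step as immediate from Theorem~\ref{T8.14}; you are right that it is not, since Theorem~\ref{T8.14} only transports a $k$-translatable sequence to the ordering induced by the isomorphism, while Example~\ref{Ex8.13} shows translatability genuinely depends on the ordering. Your supplementary lemma --- that for $x\cdot y=ax+(1-a)y$ on $\mathbb{Z}_m$ every ordering witnessing translatability is affine and hence returns the same $k$ --- is exactly what closes this gap, and your derivation is sound: the functional equation $\tau(ci+j)=\frac{a}{1-a}\tau(i)+\tau(j)$, the reduction to $\tau(ci+j)=\tau(ci)+\tau(j)$, additivity via $(k',m)=1$ from Proposition~\ref{P8.8}, and the conclusion $c\equiv a/(1-a)$ all go through, with the needed invertibility of $a$, $1-a$ and $p$ supplied by $2a(1-a)=1$ from \eqref{ea} and the bijectivity of $\tau$. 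In short, you follow the paper's decomposition but add a genuine lemma (ordering-independence of the translatability number for $\mathbb{Z}_m$-induced quasigroups) that the paper silently assumes; what your version buys is a rigorous proof, and the lemma is of independent interest since it also bears on Problem~5.
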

\medskip

Using Theorem \ref{T9.3} we can calculate all $k$-translatable quadratical quasigroups induced by groups $\mathbb{Z}_m$. For this, it is better to rewrite the condition given in Theorem \ref{T9.3} in the form $(k-1)a\equiv k({\rm mod}\,m)$. 

\medskip\noindent
{\sc $2$-translatable quadratical quasigroups}

In this case $a\equiv 2({\rm mod}\,m)$, where $a$ satisfies \eqref{e5}. So, $5\equiv 0({\rm mod}\,m)$. Thus $m=5$. Therefore there is only one $2$-translatable quadratical quasigroup induced by $\mathbb{Z}_m$. It is induced by $\mathbb{Z}_5$ and has the form $x\cdot y=2x+4y({\rm mod}\,5)$.

\medskip\noindent
{\sc $3$-translatable quadratical quasigroups}

Then $2a\equiv 3({\rm mod}\;m)$. Since \eqref{e5} can be written in the form $2a(a-1)+1=0$, we also have $3a\equiv 2({\rm mod}\;m)$. This, together with $4a\equiv 6({\rm mod}\;m)$, implies $a=4$. Hence $8\equiv 3({\rm mod}\;m)$. Thus $m=5$. Therefore there is only one $3$-translatable quadratical quasigroup induced by $\mathbb{Z}_m$. It is induced by $\mathbb{Z}_5$ and has the form $x\cdot y=4x+2y({\rm mod}\,5)$.

\medskip\noindent
{\sc $4$-translatable quadratical quasigroups}

Now $3a\equiv 4({\rm mod}\,m)$ and  $6a\equiv 8({\rm mod}\,m)$. From \eqref{e5} we obtain $6a(a-1)+3=0$, which together with the last equation gives $8a\equiv 5({\rm mod}\,m)$. This, with $9a\equiv 12({\rm mod}\,m)$, implies $a=7$. Hence $21\equiv 4({\rm mod}\,m)$. Thus $m=17$. Therefore there is only one $4$-translatable quadratical quasigroup induced by $\mathbb{Z}_m$. It is induced by $\mathbb{Z}_{17}$ and has the form $x\cdot y=7x+11y({\rm mod}\,17)$.

\medskip\noindent
{\sc $5$-translatable quadratical quasigroups}

Now $4a\equiv 5({\rm mod}\,m)$ and  $5a\equiv 3({\rm mod}\,m)$, by \eqref{e5}. Thus, $16a\equiv 20({\rm mod}\,m)$ and $15a\equiv 9({\rm mod}\,m)$, which implies $a=11$. Hence $44\equiv 5({\rm mod}\,m)$. Thus $m=13$. Therefore a $5$-translatable quadratical quasigroup is induced by $\mathbb{Z}_{13}$ and has the form $x\cdot y=11x+4y({\rm mod}\,13)$.

\medskip\noindent
{\sc $6$-translatable quadratical quasigroups}

Now $5a\equiv 6({\rm mod}\,m)$ and $12a\equiv 7({\rm mod}\,m)$, by \eqref{e5}. Thus, $25a\equiv 30({\rm mod}\,m)$ and $24a\equiv 14({\rm mod}\,m)$, which implies $a=16$. Hence $80\equiv 6({\rm mod}\,m)$. Thus $m=37$. Therefore a $6$-translatable quadratical quasigroup is induced by $\mathbb{Z}_{37}$ and has the form $x\cdot y=16x+22y({\rm mod}\,37)$.

\medskip\noindent
{\sc $7$-translatable quadratical quasigroups}

Now $6a\equiv 7({\rm mod}\,m)$ and  $7a\equiv 4({\rm mod}\,m)$, by \eqref{e5}. Thus, $a\equiv (-3)({\rm mod}\,m)$ and $(-18)\equiv 7({\rm mod}\,m)$. Consequently, $25\equiv 0({\rm mod}\,m)$. Hence $m=25$. (The case $m=5$ is impossible because must be $m>k=7$.) Therefore $a=22$. So, a $7$-translatable quadratical quasigroup is induced by $\mathbb{Z}_{25}$ and has the form $x\cdot y=22x+4y({\rm mod}\,25)$.

\medskip\noindent
{\sc $8$-translatable quadratical quasigroups}

Now $7a\equiv 8({\rm mod}\,m)$ and  $16a\equiv 9({\rm mod}\,m)$, by \eqref{e5}. Thus, $49a\equiv 56({\rm mod}\,m)$ and $48a\equiv 27({\rm mod}\,m)$ shows that $a\equiv 29({\rm mod}\,m)$. Hence $7\cdot 29\equiv 8({\rm mod}\,m)$ and $16\cdot 29\equiv 9({\rm mod}\,m)$ imply $195\equiv 0({\rm mod}\,m)$ and $455\equiv 0({\rm mod}\,m)$. Therefore, $65\equiv 0({\rm mod}\,m)$. Since $m>k=8$, the last means that $m=65$ or $m=13$. So, a $8$-translatable quadratical quasigroup is induced by $\mathbb{Z}_{13}$ or by $\mathbb{Z}_{65}$. In the first case it has the form $x\cdot y=3x+11y({\rm mod}\,13)$, in the second $x\cdot y=29x+37y({\rm mod}\,65)$.

\medskip\noindent
{\sc $9$-translatable quadratical quasigroups}

In this case $8a\equiv 9({\rm mod}\,m)$ and  $9a\equiv 5({\rm mod}\,m)$, by \eqref{e5}. So, $a\equiv (-4)({\rm mod}\,m)$, and consequently $41\equiv 0({\rm mod}\,m)$. Thus, $m=41$. Hence a $9$-translatable quadratical quasigroup is induced by $\mathbb{Z}_{41}$ and has the form $x\cdot y=37x+5y({\rm mod}\,41)$.

\medskip\noindent
{\sc $10$-translatable quadratical quasigroups}

In a similar way we can see that there is only one $10$-translatable quasigroup induced by $\mathbb{Z}_m$. It is induced by $\mathbb{Z}_{101}$ and has the form $x\cdot y=46x+56y({\rm mod}\,101)$.

\medskip
As a consequence of the above calculations and Theorem \ref{T9.4} we obtain the following list of $(m-k)$-translatable quadratical quasigroups induced by $\mathbb{Z}_m$.

\medskip\noindent
{\sc $(m\!-\!2)$-translatable quadratical quasigroups}

There is only one such quasigroup. It is induced by $\mathbb{Z}_5$ and has the form $x\cdot y=4x+2y({\rm mod}\,5)$.

\medskip\noindent
{\sc $(m\!-\!3)$-translatable quadratical quasigroups}

There is only one such quasigroup. It has the form $x\cdot y=2x+4y({\rm mod}\,5)$.

\medskip\noindent
{\sc $(m\!-\!4)$-translatable quadratical quasigroups}

There is only one such quasigroup. It has the form $x\cdot y=11x+7y({\rm mod}\,17)$.

\medskip\noindent
{\sc $(m\!-\!5)$-translatable quadratical quasigroups}

There is only one such quasigroup. It has the form $x\cdot y=3x+11y({\rm mod}\,13)$.

\medskip\noindent
{\sc $(m\!-\!6)$-translatable quadratical quasigroups}

There is only one such quasigroup. It has the form $x\cdot y=22x+16y({\rm mod}\,37)$.

\medskip\noindent
{\sc $(m\!-\!7)$-translatable quadratical quasigroups}

There is only one such quasigroup. It has the form $x\cdot y=4x+22y({\rm mod}\,25)$.

\medskip\noindent
{\sc $(m\!-\!8)$-translatable quadratical quasigroups}

There are only two such quasigroups. The first has the form $\cdot x\cdot y=11x+3y({\rm mod}\,13)$, the second $x\cdot y=37x+29y({\rm mod}\,65)$.

\medskip\noindent
{\sc $(m\!-\!9)$-translatable quadratical quasigroups}

There is only one such quasigroup. It has the form $x\cdot y=5x+37y({\rm mod}\,41)$.

\medskip\noindent
{\sc $(m\!-\!10)$-translatable quadratical quasigroups}

Such a quasigroup is induced by $\mathbb{Z}_{101}$ and has the form $x\cdot y=56x+46y({\rm mod}\,101)$.

\medskip   
Below, for $k<40$, we list all $k$-translatable quadratical quasigroups of order $m\leqslant 1200$ defined on $\mathbb{Z}_m$.
$$
\begin{array}{ccccccccc}
\begin{array}{|c|c|c|c|}\hline
k&m&a&b\\ \hline
2&5&2&4\\ \hline
3&5&4&2\\ \hline
4&17&7&11\\ \hline
5&13&11&7\\ \hline
6&37&16&22\\ \hline
7&25&22&4\\ \hline
8&13&3&11\\
&65&29&37\\ \hline
9&41&37&5\\ \hline
10&101&46&56\\ \hline
11&61&56&6\\ \hline
12&29&9&21\\ 
&145&67&79\\ \hline
13&17&11&7\\ 
&85&79&7\\ \hline
14&197&92&106\\ \hline
15&113&106&8\\ \hline
16&257&121&137\\ \hline
\end{array}
&&
\begin{array}{|c|c|c|c|}\hline
k&m&a&b\\ \hline
17&29&21&9\\ 
&145&137&9\\ \hline
18&25&4&22\\ 
&65&24&42\\ 
&325&154&172\\ \hline
19&181&172&10\\ \hline
20&401&191&211\\ \hline
21&221&211&11\\ \hline
22&97&38&60\\
&485&232&254\\ \hline
23&53&42&12\\ 
&265&254&12\\ \hline
24&577&277&301\\ \hline
25&313&301&13\\ \hline
26&677&326&352\\ \hline
27&73&60&14\\ 
&365&352&14\\ \hline
28&157&65&93\\ 
&785&379&407\\ \hline
\end{array}
&&
\begin{array}{|c|c|c|c|}\hline
k&m&a&b\\ \hline
29&421&407&15\\ \hline
30&53&12&42\\ 
&901&436&466\\ \hline
31&37&22&16\\ 
&481&466&16\\ \hline
32&41&5&37\\ 
&205&87&119\\ 
&1025&497&529\\ \hline
33&109&93&17\\ 
&545&529&17\\ \hline
34&89&28&62\\ %
&1157&562&596\\ \hline
35&613&596&18\\ \hline
36&1297&631&667\\ \hline
37&137&119&19\\ 
&685&667&198\\ \hline
38&85&24&62\\
&289&126&164\\ \hline
39&761&742&20\\ \hline
\end{array}
\end{array}
$$

\section*{\centerline{10. Classification of quadratical quasigroups}}

Below are listed all $k$-translatable quadratical quasigroups of the form $x\cdot y=ax+by({\rm mod}\,m)$, where $a<b$, defined on the group $\mathbb{Z}_m$ for $m<500$. Dual quasigroups $x\circ y=bx+ay({\rm mod}\,m)$ are omitted.

 For example, the group $\mathbb{Z}_{65}$  induces four quadratical quasigroups: $x\cdot y=24x+42y({\rm mod}\,65)$, $x\cdot y=29x+37y({\rm mod}\,65)$ and two duals to these two. The first is $18$-translatable, the second $8$-translatable. In the table below these dual quasigroups $x\cdot y=42x+24y({\rm mod}\,65)$ and $x\cdot y=37x+29y({\rm mod}\,65)$ are not listed.\\

\noindent
$
\begin{array}{cccccc}
\begin{array}{|c|c|c|c|}\hline
m&a&b&k\\ \hline
5&2&4&2\\ \hline
13&3&11&8\\ \hline
17&7&11&4\\ \hline
25&4&22&18\\ \hline
29&9&21&12\\ \hline
37&16&22&6\\ \hline
41&5&37&32\\ \hline
53&12&42&30\\ \hline
61&6&56&50\\ \hline
65&24&42&18\\ 
&29&37&8\\ \hline
73&14&60&46\\ \hline
85&7&79&72\\
&24&62&38\\ \hline
89&28&62&34\\ \hline
97&38&60&22\\ \hline
101&46&56&10\\ \hline
109&17&93&76\\ \hline
113&8&106&98\\ \hline
125&29&97&68\\ \hline
137&19&119&100\\ \hline
145&9&137&128\\ 
&67&79&12\\ \hline
149&53&97&44\\ \hline
157&65&93&28\\ \hline
169&50&120&70\\ \hline
\end{array}\rule{0mm}{15mm}

&&
\begin{array}{|c|c|c|c|}\hline
m&a&b&k\\ \hline
173&47&127&80\\ \hline
181&10&172&162\\ \hline
185&22&164&142\\
&59&127&68\\ \hline
193&41&153&112\\ \hline
197&92&106&14\\ \hline
205&37&169&132\\ 
&87&119&32\\ \hline
221&11&211&200\\
&24&198&174\\ \hline
229&54&176&122\\ \hline
233&45&189&144\\ \hline
241&89&153&64\\ \hline
257&121&137&16\\ \hline
265&12&254&242\\
&42&224&182\\ \hline
269&94&176&82\\ \hline
277&109&169&60\\ \hline
281&27&255&228\\ \hline
289&126&164&38\\ \hline
293&78&216&138\\ \hline
305&67&239&172\\ 
&117&189&72\\ \hline
313&13&301&288\\ \hline
317&102&216&114\\ \hline
325&29&297&268\\ 
&154&172&18\\ \hline

\end{array}
&&
\begin{array}{|c|c|c|c|}\hline
m&a&b&k\\ \hline
337&95&243&148\\ \hline
349&107&243&136\\ \hline
353&156&198&42\\ \hline
365&14&352&338\\
&87&279&192\\ \hline
373&135&239&104\\ \hline
377&50&328&278\\
&154&224&70\\ \hline
389&58&332&274\\ \hline
397&32&366&334 \\ \hline
401&191&211&20\\ \hline
409&72&338&266\\ \hline
421&15&407&392\\ \hline
425&79&347&268\\
&147&279&132\\ \hline
433&90&344&254\\ \hline
445&62&384&322\\
&117&329&212\\ \hline
449&34&416&382\\ \hline
457&55&403&348\\ \hline
461&207&255&48\\ \hline
481&16&466&450\\
&133&349&216\\ \hline
485&157&329&172\\ 
&232&254&22\\ \hline
493&79&415&336\\
&96&398&302\\ \hline
\end{array}
\end{array}
$


\section*{\centerline{10. Open questions and problems}}\setcounter{section}{9}
\setcounter{theorem}{0}

\noindent
{\bf Problem 1.} {\it For which values of $n$ are there quadratical quasigroups of form $Qn$?} 

Note that $n\not\in\{5,6,8,14,17,19,33,26,32,\ldots\}$. Moreover, from Theorem 4.11 in \cite{DM} it follows that there are no such quasigroups if there is a prime $p|4n+1$ such that $p\equiv 3({\rm mod}\,4)$.

\medskip\noindent
{\bf Problem 2.} {\it Is every quadratical quasigroup of form $Qn$ translatable?}

The answer is positive if $Qn$ is isomorphic to a quasigroup induced by $\mathbb{Z}_m$.

\medskip\noindent
{\bf Problem 3.} {\it Are there self-dual, quadratical groupoids of order greater than $9$?}

Such quasigroups cannot be induced by $\mathbb{Z}_m$.

\medskip\noindent
{\bf Problem 4.} {\it Is every quadratical groupoid of order greater than $9$ and of form $Qn$ $(n\geqslant 3)$  generated by any two of its distinct elements?}

\medskip\noindent
{\bf Problem 5.} {\it	If a quadratical quasigroup $Q$ of order $m$ is $k$-translatable, then is $Q^*$ $(m-k)$-translatable?}

For quadratical quasigroups induced by $\mathbb{Z}_m$ the answer is positive.

\small

\noindent
W.A. Dudek \\
 Faculty of Pure and Applied Mathematics,
 Wroclaw University of Science and Technology,
 50-370 Wroclaw,  Poland \\
 Email: wieslaw.dudek@pwr.edu.pl\\[4pt]
R.A.R. Monzo\\
Flat 10, Albert Mansions, Crouch Hill, London N8 9RE, United Kingdom\\
E-mail: bobmonzo@talktalk.net
}


\begin{thebibliography}{9}  
\bibitem{D1} {\bf W.A. Dudek}, {\it Quadratical quasigroups}, Quasigroups and Related Systems {\bf 4} (1997), $9-13.$
\bibitem{D2} {\bf W.A. Dudek}, {\it Parastrophes of quasigroups}, Quasigroups and Related Systems {\bf 23} (2015), $221-230$.
\bibitem{DM} {\bf W.A. Dudek and R.A.R. Monzo}, {\it On the fine structure of quadratical quasigroups}, Quasigroups and Related Systems {\bf 24} (2016), $205-218$.
\bibitem{Pol} {\bf M. Polonijo}, {\it A note on Ward quasigroups}, An. \c{S}tiin\c{t}. Univ. Al. I. Cuza Ia\c{s}i. Sec\c{t}. I a Mat. (N.S.), {\bf 32} (1986), no. 2, $5-10.$
\bibitem{Vin} {\bf I.M. Vinogradov}, {\it Foundations of the theory of numbers}, (Russian), Nauka, Moscow, 1965.
\bibitem{Vol1} {\bf V. Volenec}, {\it Quadratical groupoids}, Note di Matematica {\bf 13} (1993), no. 1, $107-115.$
\bibitem{Vol2} {\bf V. Volenec}, {\it Squares in quadratical quasigroups}, Quasigroups and Related Systems {\bf 7} (2000), $37-44.$
\bibitem{Vol3} {\bf V. Volenec and R. Kolar-\v{S}uper}, {\it Skewsquares in quadratical quasigroups}, Comment. Math. Univ. Carolin. {\bf 49} (2008), $397-410.$
\bibitem{Vol4} {\bf V. Volenec and R. Kolar-\v{S}uper}, {\it Parallelograms in quadratical quasigroups}, Quasigroups and Related Systems {\bf 18} (2010), $229-240.$
\end{thebibliography}
\end{document}